\documentclass{amsart}
\usepackage{amsmath,amssymb,amscd,latexsym}

\usepackage[all]{xy}

\newcommand{\F}{{\mathbb{F}}}

\newcommand{\N}{{\mathbb{N}}}

\newcommand{\Sph}{{\mathbb{S}}}

\newcommand{\Z}{{\mathbb{Z}}}

\newcommand{\Eni}{E'_{n,I}}

\newcommand{\Ev}{\mathrm{Ev}}
\newcommand{\op}{\mathrm{op}}
\newcommand{\pro}{\mathrm{pro}}

\newcommand{\colim}{\operatorname*{colim}}
\newcommand{\hocolim}{\operatorname*{hocolim}}
\newcommand{\holim}{\operatorname*{holim}}
\newcommand{\cosk}{\mathrm{cosk}}
\newcommand{\compl}{\hat{(\cdot)}}

\newcommand{\Gal}{\mathrm{Gal}}

\newcommand{\Hom}{\mathrm{Hom}}
\newcommand{\homp}{\mathrm{hom}_{\hShp}}

\newcommand{\hompg}{\mathrm{hom}_{\hShpg}}
\newcommand{\map}{\mathrm{map}}
\newcommand{\mapp}{\mathrm{map}_{\ast}}
\newcommand{\Map}{\mathrm{Map}}

\newcommand{\Mapg}{\mathrm{Map}_G}

\newcommand{\sk}{\mathrm{sk}}

\newcommand{\Tot}{\mathrm{Tot}\,}
\newcommand{\Ah}{{\mathcal A}}
\newcommand{\Ch}{{\mathcal C}}

\newcommand{\Eh}{{\mathcal E}}
\newcommand{\hEh}{\hat{\mathcal E}}
\newcommand{\Fh}{{\mathcal F}}

\newcommand{\Hh}{{\mathcal H}}
\newcommand{\Hhf}{{\mathcal H}_{\mathrm{fin}}}
\newcommand{\Hhp}{{\mathcal H}_{\ast}}
\newcommand{\hHh}{\hat{{\mathcal H}}}
\newcommand{\hHhp}{\hat{{\mathcal H}}_{\ast}}
\newcommand{\hHhg}{\hat{{\mathcal H}}_G}
\newcommand{\hHhpg}{\hat{{\mathcal H}}_{\ast G}}

\newcommand{\hL}{\hat{L}}

\newcommand{\Mh}{{\mathcal M}}

\newcommand{\Ph}{\mathcal{P}}

\newcommand{\Rh}{{\mathcal R}}
\newcommand{\Sh}{{\mathcal S}}
\newcommand{\Shp}{{\mathcal S}_{\ast}}
\newcommand{\SHh}{{\mathcal{SH}}}

\newcommand{\hSHh}{{\hat{\mathcal{SH}}}}
\newcommand{\hSHhg}{{\hat{\mathcal{SH}}_G}}
\newcommand{\hSh}{\hat{\mathcal S}}
\newcommand{\hShg}{\hat{\mathcal S}_G}

\newcommand{\hShp}{\hat{\mathcal S}_{\ast}}
\newcommand{\hShpg}{\hat{\mathcal S}_{\ast G}}
\newcommand{\hShpk}{\hat{\mathcal S}_{\ast K}}

\newcommand{\hSp}{\mathrm{Sp}(\hShp)}
\newcommand{\hSpg}{\mathrm{Sp}(\hShpg)}
\newcommand{\hSpk}{\mathrm{Sp}(\hShpk)}

\newcommand{\Spk}{\mathrm{Sp}(\Sh_{\ast K})}

\newcommand{\Sp}{\mathrm{Sp}(\Sh_{\ast})}

\newtheorem{theorem}{Theorem}[section]
\newtheorem{lemma}[theorem]{Lemma}
\newtheorem{prop}[theorem]{Proposition}
\newtheorem{defn}[theorem]{Definition}

\newtheorem{cor}[theorem]{Corollary}
\theoremstyle{definition}
\newtheorem{convention}[theorem]{Convention}
\newtheorem{example}[theorem]{Example}

\newtheorem{remark}[theorem]{Remark}
\parindent0em
\begin{document}
\title{Continuous homotopy fixed points for Lubin-Tate spectra}
\author{Gereon Quick}\thanks{The author was supported by Research Fellowship QU 317/1 of the German Research Foundation (DFG)}
\address{Department of Mathematics, Harvard University, Cambridge, MA 02138, USA}
\email{gquick@math.harvard.edu}
\date{}
\begin{abstract}
We construct a stable model structure on profinite spectra with a continuous action of an arbitrary profinite group. This provides a natural framework for a new and conceptually simplified construction of continuous homotopy fixed point spectra and of continuous homotopy fixed point spectral sequences for Lubin-Tate spectra under the action of the extended Morava stabilizer group. 
\end{abstract}
\maketitle

\section{Introduction}

For the action of a discrete group $G$ on a spectrum $X$ there are well-known constructions for the homotopy fixed point spectrum $X^{hG}$ and for the homotopy fixed point spectral sequence. For fibrant $X$, the spectrum $X^{hG}$ is given by the $G$-fixed points of the function spectrum $F(EG_+,X)$, where $EG$ is a contractible free $G$-space. For each spectrum $Z$, the spectral sequence 
\begin{equation}\label{dssintro}
H^{\ast}(G;X^{\ast}Z)\Rightarrow [Z,X^{hG}]^{\ast}
\end{equation}
is induced by the filtration by the finite subskeleta of $EG$. But in some cases of interest, the group $G$ and the spectrum $X$ carry additional structures that one would like to take care of. For example, this is the case for the most important group action in the chromatic approach to stable homotopy theory, the action of the extended Morava stabilizer group $G_n$ on the $p$-local Landweber exact spectrum $E_n$. Let us shortly describe this famous example.\\ 
Let $p$ be a fixed prime, $n\geq 1$ an integer and $\F_{p^n}$ the field with $p^n$ elements. Let $S_n$ be the $n$th Morava stabilizer group, i.e. the automorphism group of the height $n$ Honda formal group law $\Gamma_n$ over $\F_{p^n}$. We denote by $\Gal(\F_{p^n}/\F_p)$ the Galois group of $\F_{p^n}$ over $\F_p$ and let $G_n=S_n \rtimes \Gal(\F_{p^n}/\F_p)$ be the semi-direct product.
By the work of Lubin and Tate \cite{lubintate}, there is a universal ring of deformations $E(\F_{p^n},\Gamma_n)=W(\F_{p^n})[[u_1,\ldots,u_{n-1}]]$ of $(\F_{p^n},\Gamma_n)$, where $W(\F_{p^n})$ denotes the ring of Witt vectors of $\F_{p^n}$.  The $MU_{\ast}$-module $E(\F_{p^n},\Gamma_n)[u,u^{-1}]$ induces via the Landweber exact functor theorem a homology theory and hence a spectrum, denoted by $E_n$ and called Lubin-Tate spectrum, with $E_{n\ast}=E(\F_{p^n},\Gamma_n)[u,u^{-1}]$, $|u|=-2$. The profinite group $G_n$ acts on the ring $E_{n\ast}$ (cf. \cite{dh2}). By Brown representability, this induces an action of $G_n$ by maps of rings in the stable homotopy category. Furthermore, Goerss, Hopkins and Miller have shown the crucial fact that there is even a $G_n$-action on the spectrum-level on $E_n$ that induces the action in the stable category (see \cite{goersshopkins} and \cite{rezk}).\\
Now $S_n$, $\Gal(\F_{p^n}/\F_p)$ and hence also $G_n$ are profinite groups. Moreover each homotopy group $\pi_t E_n$ has the structure of a continuous profinite $G_n$-module. The continuity of the action of $G_n$ on each $\pi_tE_n$ is an important property for stable homotopy theory. For by Morava's change of rings theorem, the $K(n)_{\ast}$-local $E_n$-Adams spectral sequence for the sphere spectrum $S^0$ has the form
\begin{equation}\label{K(n)AdamsSS}
H^{\ast}(G_n;E_{n\ast})\Rightarrow \pi_{\ast}L_{K(n)}S^0
\end{equation}
where the $E_2$-term is continuous cohomology of $G_n$ with profinite coefficients $E_{n\ast}$. Here $K(n)$ denotes the $n$th Morava $K$-theory and $L_{K(n)}$ denotes $K(n)_{\ast}$-localization, cf. \cite{morava} and \cite{devinatz}. Hence $L_{K(n)}S^0$ looks like a {\em continuous} $G_n$-homotopy fixed point spectrum of $E_n$ and one would like to interpret the above spectral sequence as a continuous homotopy fixed point spectral sequence of the $G_n$-action. \\
But the classical construction of homotopy fixed points and its spectral sequence (\ref{dssintro}) do not reflect the topology on $G_n$. The function spectrum $F(EG_+,E_n)$ should consist of continuous maps in some sense and the $E_2$-term of the spectral sequence (\ref{dssintro}) should be continuous cohomology of $G$. Hence it is a fundamental question in stable homotopy theory to understand in which way $E_n$ can be viewed as an object with a {\em continuous} action under $G_n$, see \cite{devinatzhopkins}.\\ 
Devinatz and Hopkins \cite{devinatzhopkins} have circumvented this problem and given an ad hoc argument for the construction of continuous homotopy fixed points of $E_n$. They proceeded in two steps by first constructing homotopy fixed points, here denoted by $E_n^{dhU}$ by adopting the notation in \cite{davis}, for an open subgroup $U$ of $G_n$ using that $G_n/U$ is finite. In a second step they defined $E_n^{dhG}$ for a closed subgroup $G$. Since $G_n$ is a $p$-adic analytic group, it is possible to find a sequence of open subgroups $G_n=U_0\supset U_1 \supset \ldots$ whose intersection is the trivial subgroup. Then $E_n^{dhG}$ is defined as an appropriate homotopy colimit of the $E_n^{dhU_iG}$'s. Moreover, for every closed subgroup $G$ of $G_n$, they provided a construction of a $K(n)_*$-local Adams spectral sequence for the homotopy fixed point spectrum $E_n^{dhG}$ whose $E_2$-term equals the desired continuous cohomology.\\ 
But since the argument of \cite{devinatzhopkins} did not explain in which sense $G_n$ acts continuously on $E_n$, the question remained how to view $E_n$ as an actual continuous spectrum and to find a natural framework for the continuous homotopy fixed point spectral sequence. The purpose of this paper is to give a new and complete answer to this question.\\
A previous approach has been developed by Davis in \cite{davis} and by Behrens and Davis in \cite{behrensdavis} by studying discrete $G$-spectra. Davis used the idea of Devinatz and Hopkins to start with the homotopy fixed point spectrum $E_n^{dhU}$ of \cite{devinatzhopkins} for an open subgroup $U\subset G_n$ and defined a new spectrum $F_n:=\colim_iE_n^{dhU_i}$ where the $U_i$ run through a fixed sequence of open subgroups of $G_n$ as above. The $K(n)_{\ast}$-localization of $F_n$ is equivalent to $E_n$, where $K(n)_*$ denotes the $n$th Morava $K$-theory. One can regard the localization of $F_n$ as a continuous discrete $G_n$-spectrum. Furthermore, Davis developed a stable homotopy theory for continuous discrete $G$-spectra, for an arbitrary profinite group $G$, based on the work of Goerss \cite{goerss} and Jardine \cite{jardine}. Then he defined systematically the homotopy fixed points for closed subgroups of $G_n$ and constructed a continuous homotopy fixed point spectral sequence.\\ 
A different method has been used by Fausk. In \cite{fausk}, Fausk constructed a model structure for pro-$G$-spectra, where $G$ denotes a compact Hausdorff topological group, e.g. a profinite group. He also obtained results on homotopy fixed points, descent spectral sequences and iterated homotopy fixed points. These results are equivalent to those of \cite{davis} if $G$ has finite virtual cohomological dimension.\\ 
But the crucial point is that if one wants to use the methods of Davis or Fausk for Lubin-Tate spectra $E_n$, one first has to apply the construction of \cite{devinatzhopkins} for open subgroups and has to rewrite $E_n$ as the $K(n)_*$-localization of a suitable colimit of the $E_n^{dhU_i}$'s as in \cite{devinatzhopkins}. Hence the above question still remained open how to view $E_n$ as a continuous spectrum without using \cite{devinatzhopkins} for open subgroups of $G_n$ and to give a unified construction for all closed subgroups of $G_n$ without \cite{devinatzhopkins}.\\
The approach of the present paper provides a new unified natural construction of continuous homotopy fixed points for any closed subgroup independent of \cite{devinatzhopkins} and hence, in particular, also a new construction for open subgroups of $G_n$. The idea is straightforward. Since the homotopy groups $\pi_tE_n$ are not discrete but profinite $G_n$-modules, a natural guess would be to look for a profinite structure on $E_n$. And, in fact, there is one in the following sense. There is a model for $E_n$ that is built out of a sequence of simplicial profinite sets that carry a continuous $G_n$-action. Consequently, a natural setting to study the action of $G_n$ on $E_n$ is a suitable category of continuous profinite $G_n$-spectra.\\

Let us give a quick outline of the strategy and provide precise statements of the main results of this paper. We will study continuous actions on profinite spectra of an arbitrary profinite group $G$ in general. A pointed profinite $G$-space is a simplicial object in the category of profinite sets with the limit topology and a continuous $G$-action together with a choice of basepoint. These pointed profinite $G$-spaces form a category $\hShpg$ with levelwise continuous $G$-equivariant maps as morphisms. A profinite $G$-spectrum $X$ is then a sequence of pointed profinite $G$-spaces $X_n$ with maps $S^1\wedge X_n \to X_{n+1}$ for all $n$, where the simplicial circle $S^1$ is a simplicial finite set with trivial $G$-action. \\ 
We will start with fundamental properties of profinite $G$-spaces. Then we introduce the category $\hSpg$ of profinite $G$-spectra and show that $\hSpg$ is equipped with a natural stable model structure.  \\ 
The fibrant replacement functor $R_G$ in $\hSpg$ will allow us to give a natural definition for {\em continuous} homotopy fixed point spectra. In fact, the homotopy fixed point spectrum $X^{hG}$ of a profinite $G$-spectrum $X$ is defined as a continuous  mapping spectrum $X^{hG}:=\Mapg(EG_+,R_G X)$ of $G$-equivariant and levelwise continuous maps in $\hSpg$ (see Definitions \ref{stableMapgdefn} and \ref{homotopyfixedpoints}). We will show that these homotopy fixed point spectra are equipped with a convergent spectral sequence
$$H^{\ast}(G;\pi_*X) \Rightarrow \pi_*X^{hG}$$
whose $E_2$-terms are given by the continuous cohomology groups of $G$. \\
The striking advantage of studying profinite actions in the category of profinite spectra is that $G$ and its classifying space $EG$ yield natural objects in $\hShg$. The homotopy fixed point spectral sequence is then obtained just as for a finite group by filtering $EG$ by its finite subskeleta. (But one should note that although $EG$ and $X$ are profinite, the function spectrum $\Mapg(EG_+,X)$ does not in general inherit a profinite structure, since, roughly speaking, the limit of $EG$ is turned into a colimit. Hence the homotopy groups of $X^{hG}$ are not profinite anymore in general.)\\
%

In order to be able to apply these techniques to the action of $G_n$ on the Lubin-Tate spectrum $E_n$, we have to prove that we may consider $E_n$ as a continuous profinite $G_n$-spectrum. The starting observation is that $E_n$ has a decomposition as a homotopy limit of spectra $\holim_I E_n\wedge M_I$, where the $M_I$ denote generalized Moore spectra corresponding to an inverse system of ideals $I$ in $BP_*$, cf.\,\cite{homasa}. These spectra have the important property that, for each such ideal $I$ and for every $t$, the homotopy groups $\pi_t(E_n\wedge M_I)$ are finite (being trivial if $t$ is odd). For any spectrum with finite homotopy groups, we will construct a concrete stably equivalent model as a profinite spectrum. This will provide a model of $E_n\wedge M_I$ in the category of profinite $G_n$-spectra. Taking the homotopy limit over all $I$ will yield a model of $E_n$ as a profinite $G_n$-spectrum. The following results will be proven in the last section.
\begin{theorem}\label{Endecompintro}
$E_n$ has a canonical model in the category of continuous profinite $G_n$-spectra, i.e. there is a $G_n$-equivariant map of spectra $\psi:E_n \to E'_n$ such that $E_n'$ is a profinite $G_n$-spectrum and $\psi$ is a stable equivalence of underlying spectra.
\end{theorem}
This allows us to apply the techniques described above to $E_n$ and to prove the following theorem where $K(n)_*$ denotes the $n$th Morava $K$-theory and $L_{K(n)}$ the $K(n)_*$-localization functor of spectra.  
\begin{theorem}\label{descentssintro}
Let $G$ be a closed subgroup of $G_n$.\\
{\rm (i)} There is a $K(n)_*$-local continuous homotopy fixed point spectrum $E_n^{hG}$ of $E_n$ which is natural in $G$ and equivalent to the fixed point spectrum $E_n^{dhG}$ of \cite{devinatzhopkins}. In particular, we obtain an equivalence $E_n^{hG_n}\simeq E_n^{dhG_n} \simeq L_{K(n)}S^0$.\\
{\rm (ii)} There is a natural strongly convergent continuous homotopy fixed point spectral sequence starting from continuous cohomology
$$H^{\ast}(G;\pi_*E_n) \Rightarrow \pi_*E_n^{hG}$$
which is isomorphic to the $K(n)_*$-local $E_n$-Adams spectral sequence converging to $E_n^{dhG}$. 
\end{theorem}
This theorem restates some of the main results of \cite{devinatzhopkins} in the setting of profinite $G$-spectra. The point is that it has a conceptually simpler proof. While in \cite{devinatzhopkins} the special properties of $G_n$ and $E_n$ have been taken advantage of to define homotopy fixed points, the methods we develop to prove Theorem \ref{descentssintro} are general and work for any continuous action of a profinite group on any profinite spectrum. (The only place where we use that $G_n$ is a finitely generated profinite group is when we show that $E_n$ has a model in the category of profinite $G_n$-spectra.) In particular, the definition of $E_n^{hG}$ is the same for all closed (or open) subgroups. \\

Finally, in order to prove that the canonical comparison map $E_n^{dhG}\to E_n^{hG}$ is an equivalence, we will show that continuous mapping spectra yield a $K(n)_*$-local $E_n$-Adams resolution of $E_n^{dhG}$. The key point is that we show the following result, where $\Map(-,-)$ denotes the continuous mapping spectrum functor of Definition \ref{stableMapdefn} and Remark \ref{stableMapgactionrem}.
\begin{theorem}
Let $S$ be a profinite set. There is a map of spectra in the stable homotopy category 
$$L_{K(n)}(E_n\wedge \Map(S,E'_n)) \to \Map(S\times G_n,E'_n)$$
that induces an isomorphism on homotopy groups. In particular, there is an isomorphism in the stable homotopy category
$$L_{K(n)}(E_n\wedge E_n)\cong \Map(G_n,E'_n).$$
\end{theorem}
The last isomorphism of spectra corresponds to the isomorphism of homotopy groups $\pi_*\hL(E_n\wedge E_n)\cong \Map(G_n,\pi_*E_n)$ of \cite{devinatzhopkins}, \S 2. It is an important feature of the category of profinite $G_n$-spectra that we are able to express this relation on the level of spectra and not only for homotopy groups.\\
%
%
%

{\bf Acknowledgements.} The starting point of this project was a question by Dan Isaksen if the profinite spectra of \cite{etalecob} fit in the theory of Lubin-Tate spectra. I would like to thank him very much for this generous hint. I am very grateful to Mike Hopkins for very helpful discussions. I would like to thank Daniel Davis for many comments.
\section{Homotopy theory of profinite spaces}
\subsection{Profinite spaces}\label{secprofinspaces}
We start with basic definitions and invariants for profinite spaces that will be necessary to construct a homotopy category for profinite spectra. \\
For a category $\Ch$ with small limits, the pro-category of $\Ch$, denoted pro-$\Ch$, has as objects all cofiltering diagrams $X:I \to \Ch$. Its sets of morphisms are defined as
$$\Hom_{\pro-\Ch}(X,Y):=\lim_{j\in J}\colim_{i\in I} \Hom_{\Ch}(X(i),Y(j)).$$
A constant pro-object is one indexed by the category with one object and one identity map. The functor sending an object $X$ of $\Ch$ to the constant pro-object with value $X$ makes $\Ch$ a full subcategory of pro-$\Ch$. The right adjoint of this embedding is the limit functor $\lim$: pro-$\Ch$ $\to \Ch$, which sends a pro-object $X$ to the limit in $\Ch$ of the diagram $X$.\\ 
Let $\Eh$ denote the category of sets and let $\Fh$ be the full subcategory of finite sets. Let $\hEh$ be the category of compact Hausdorff and totally disconnected topological spaces. We may identify $\Fh$ with a full subcategory of $\hEh$ in the obvious way. The limit functor $\lim$: pro-$\Fh \to \hEh$ is an equivalence of categories. Moreover the forgetful functor $\hEh \to \Eh$ admits a left adjoint $\compl:\Eh \to \hEh$ which is called profinite completion.\\ 
We denote by $\hSh$ (resp. $\Sh$) the category of simplicial profinite sets (resp. simplicial sets). The objects of $\hSh$ (resp. $\Sh$) will be called profinite spaces (resp. spaces). The category $\hSh$ was studied for the first time by Morel in \cite{ensprofin}.\\
For a profinite space $X$, we define the set $\Rh(X)$ of simplicial open equivalence relations on $X$. An element $R$ of $\Rh(X)$ is a simplicial profinite subset of the product $X\times X$ such that, in each degree $n$, $R_n$ is an equivalence relation on $X_n$ and an open subset of $X_n\times X_n$. It is ordered by inclusion. For every element $R$ of $\Rh(X)$, the quotient $X/R$ is a simplicial finite set and the map $X \to X/R$ is a map of profinite spaces. The canonical map $X \to \lim_{R\in \Rh(X)} X/R$ is an isomorphism in $\hSh$, cf. \cite{ensprofin}, Lemme 1.\\ 
The profinite completion of sets induces a functor $\compl:\Sh \to \hSh$, which is also called profinite completion. For a space $Z$, its profinite completion can be described as follows. Let $\Rh(Z)$ be the set of simplicial equivalence relations $R$ on $Z$ such that the quotient $Z/R$ is a simplicial finite set. The set $\Rh(Z)$ is ordered by inclusion. The profinite completion is defined as the limit of the $Z/R$ for all $R\in \Rh(Z)$, i.e. $\hat{Z} := \lim_{R\in \Rh(Z)} Z/R$. Profinite completion of spaces is again left adjoint to the forgetful functor $|\cdot|:\hSh \to \Sh$ which sends a profinite space to its underlying simplicial set. \\
Let $X$ be a profinite space and $\pi$ a topological abelian group. The continuous cohomology $H^{\ast}(X;\pi)$ of $X$ with coefficients in $\pi$ is defined as the cohomology of the complex $C^{\ast}(X;\pi)$ of continuous cochains of $X$ with values in $\pi$, i.e. $C^n(X;\pi)$ denotes the set $\Hom_{\mathrm{cont}}(X_n,\pi)$ of continuous maps $\alpha:X_n \to \pi$ and the differentials $\delta^n:C^n(X;\pi)\to C^{n+1}(X;\pi)$ are the morphisms associating to $\alpha$ the map $\sum_{i=0}^{n+1}(-1)^i\alpha \circ d_i$, where $d_i$ denotes the $i$th face map $X_{n+1}\to X_n$. If $\pi$ is a finite abelian group and $Z$ a simplicial set, then the cohomology $H^{\ast}(Z;\pi)$ of the space $Z$ and the continuous cohomology $H^{\ast}(\hat{Z};\pi)$ of the profinite completion $\hat{Z}$ are canonically isomorphic. 
\begin{convention}
Above and in the rest of the paper we do not use a special notation for continuous cohomology. For a profinite space and a topological coefficient group, cohomology will always mean continuous cohomology. 
\end{convention}
If $G$ is an arbitrary profinite group, we may still define the first cohomology of $X$ with coefficients in $G$ following Morel's idea in \cite{ensprofin} p. 355. The functor $(\hSh)^{\mathrm{op}}\to \Eh$, $X\mapsto \Hom_{\hEh}(X_0,G)$, is represented in $\hSh$ by the profinite space $EG$, whose set of $n$-simplices is $EG_n=G^{n+1}$, the $(n+1)$-fold product of $G$. We define the $1$-cocycles $Z^1(X;G)$ to be the set of continuous maps $f:X_1 \to G$ such that $f(d_0x)f(d_2x)=f(d_1x)$ for every $x \in X_1$. The functor $(\hSh)^{\mathrm{op}}\to \Eh$, $X\mapsto Z^1(X;G)$ is represented by a profinite space $BG=EG/G$, whose set of $n$-simplices is $BG_n=G^{n}$, the $n$-fold product of $G$. Furthermore, there is a map 
$$\delta:\Hom_{\hSh}(X,EG) \to Z^1(X;G)\cong \Hom_{\hSh}(X,BG)$$ 
which sends $f:X_0 \to G$ to the $1$-cocycle $x\mapsto \delta f(x)=f(d_0x)f(d_1x)^{-1}$. We denote by $B^1(X;G)$ the image of $\delta$ in $Z^1(X;G)$ and we define the pointed set $H^1(X;G)$ to be the quotient $Z^1(X;G)/B^1(X;G)$. Finally, if $X$ is a profinite space, we define $\pi_0X$ to be the coequalizer in $\hEh$ of the diagram $d_0,d_1:X_1 \rightrightarrows X_0$.\\ 
The profinite fundamental group of $X$ is defined via covering spaces in the spirit of Grothendieck. There is a universal profinite covering space $(\tilde{X},x)$ of $X$ at a vertex $x \in X_0$. Then $\pi_1(X,x)$ is defined to be the group of automorphisms of $(\tilde{X},x)$ over $(X,x)$. As the limit of the finite automorphism groups of the finite Galois coverings of $(X,x)$ the group $\pi_1(X,x)$ is naturally a profinite group. For details we refer the reader to \cite{profinhom}.     
\begin{defn}\label{defnwe}
A morphism $f:X\to Y$ in $\hSh$ is called\\
{\rm (1)} a weak equivalence if the induced map $f_{\ast}:\pi_0(X) \to \pi_0(Y)$ is an isomorphism of profinite sets, $f_{\ast}:\pi_1(X,x) \to \pi_1(Y,f(x))$ is an isomorphism of profinite groups for every vertex $x\in X_0$, and $f^{\ast}:H^q(Y;\Mh) \to H^q(X;f^{\ast}\Mh)$ is an isomorphism for every local coefficient system $\Mh$ of finite abelian groups on $Y$ for every $q\geq 0$;\\
{\rm (2)} a cofibration if $f$ is a levelwise monomorphism;\\
{\rm (3)} a fibration if it has the right lifting property with respect to every cofibration that is also a weak equivalence. 
\end{defn}
The category $\hSh$ has a simplicial structure (see also \cite{dehon}, \S 1.2). Let $X$ and $Y$ be profinite spaces. The mapping space $\map_{\hSh}(X,Y)$ is defined as the simplicial set whose set of $n$-simplices is given as the set of maps 
$$\map_{\hSh}(X,Y)_n=\Hom_{\hSh}(X\times \Delta[n], Y)$$
and whose simplicial structure is induced by the cosimplicial structure of the standard simplex $[n] \mapsto \Delta[n]$. This defines a functor 
$$\map_{\hSh}(-,-):\hSh^{\mathrm{op}}\times \hSh \to \Sh.$$
The tensor and cotensor structure on the category $\hSh$ is defined as follows. 
Let $K$ be a finite simplicial set, i.e. a simplicial set with only finitely many nondegenerate simplices. This implies, in particular, that $K$ is a simplicial finite set. Let $X$ be a profinite space. The tensor object $X\otimes K\in \hSh$ is defined as the levelwise defined product of simplicial profinite sets $X\times K$. For the cotensor object, we recall that any profinite space $X$ is canonically  isomorphic to a limit $\lim_{\beta}X_{\beta}$ of simplicial finite sets. This implies that the set of maps $\Hom_{\hSh}(K, X)$ inherits a natural structure as a profinite set given by the limit of finite sets
$$\lim_{\beta} \Hom_{\hSh}(K,X_{\beta}).$$
The cotensor or function object in $\hSh$ is defined as the profinite space $\hom_{\hSh}(K,X)\in \hSh$ whose set of $n$-simplices is given by the profinite set of maps
$$\hom_{\hSh}(K,X)_n = \Hom_{\hSh}(K\times \Delta[n], X).$$ 
If $K$ is an arbitrary simplicial set, it is isomorphic to the filtered colimit over its finite simplicial subsets $K_{\alpha}$. For a profinite space $X$, we define the tensor object $X\otimes K$ to be the colimit in $\hSh$ of the profinite spaces $X\otimes K_{\alpha}$ (see also \cite{dehon}, \S 1.2), i.e. 
$$X\otimes K := \colim_{\alpha} X\times K_{\alpha} ~ \mathrm{in}~\hSh.$$
The function object is defined to be the limit in $\hSh$ of the profinite spaces $\hom_{\hSh}(K_{\alpha},X)$, i.e. 
$$\hom_{\hSh}(K,X):= \lim_{\alpha} \hom_{\hSh}(K_{\alpha},X) ~ \mathrm{in}~\hSh.$$
Let $X$ and $Y$ be profinite spaces and let $K$ be a simplicial set. Then mapping space, tensor and function objects are connected by the natural bijections
$$\map_{\hSh}(X\otimes K, Y) \cong \map_{\Sh}(K, \map_{\hSh}(X,Y))$$
and 
$$\map_{\hSh}(Y, \hom_{\hSh}(K, X)) \cong \map_{\Sh}(K, \map_{\hSh}(X,Y))$$
where $\map_{\Sh}(-,-)$ denotes the mapping space functor on $\Sh$. This defines the structure of a simplicial category on $\hSh$ in the sense of \cite{homalg}, II \S\S 1+2.\\
%
The following theorem was proven in \cite{gspaces}, Theorem 2.3. 
\begin{theorem}\label{modelstructure}
The classes of weak equivalences, cofibrations and fibrations, as defined above, provide $\hSh$ with the structure of a fibrantly generated left proper simplicial model category. We denote the homotopy category by $\hHh$.
\end{theorem}
We consider the category $\Sh$ of simplicial sets with the usual model structure of \cite{homalg}. We denote its homotopy category by $\Hh$. Then the next result was shown in \cite{profinhom}.
\begin{prop}\label{adjcompletion}
{\rm 1.} The levelwise completion functor $\compl: \Sh \to \hSh$ preserves weak equivalences and cofibrations.\\
{\rm 2.} The forgetful functor $|\cdot|:\hSh \to \Sh$ preserves fibrations and weak equivalences between fibrant objects.\\
{\rm 3.} The induced completion functor $\compl: \Hh \to \hHh$ and the right derived functor $R|\cdot|:\hHh \to \Hh$ form a pair of adjoint functors.
\end{prop}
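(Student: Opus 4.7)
The plan is to reduce everything to verifying that $(\compl,|\cdot|)$ is a Quillen adjunction, since assertions (2) and (3) will then be formal consequences of this fact. The adjunction of underlying functors was already recorded in the preceding discussion, so the substance of the argument lies in (1): the levelwise completion functor preserves cofibrations and weak equivalences.

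First I would handle cofibrations. Kan--Quillen cofibrations in $\Sh$ are the levelwise monomorphisms, and cofibrations in $\hSh$ are also levelwise monomorphisms by Definition \ref{defnwe}, so it suffices to show that $\compl:\Eh\to\hEh$ sends injections to injections. Given an injection $A\hookrightarrow B$ in $\Eh$, every equivalence relation on $A$ with finite quotient extends to an equivalence relation on $B$ with finite quotient (adjoining $B\setminus A$ as a single additional class), so the induced map $\hat A=\lim A/R\to \lim B/R=\hat B$ is injective in $\hEh$. Applied levelwise, this gives the cofibration statement.

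Next I would verify preservation of weak equivalences by checking the three invariants of Definition \ref{defnwe} directly for $\hat f:\hat Y\to \hat Z$. The key inputs, available from the preceding subsection and from \cite{profinhom}, \cite{ensprofin}, are the natural comparison isomorphisms $\pi_0(\hat Z)\cong\widehat{\pi_0(Z)}$ (which follows from the fact that $\compl$ is a left adjoint and therefore commutes with the coequalizer defining $\pi_0$), $\pi_1(\hat Z,x)\cong \widehat{\pi_1(Z,x)}$ as profinite groups, and $H^{\ast}(\hat Z;\Mh)\cong H^{\ast}(Z;\Mh)$ for any local coefficient system $\Mh$ of finite abelian groups; the constant-coefficient version of the last isomorphism is already recorded in the excerpt, and the local case reduces to it after pulling back along the universal profinite cover. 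Since $f$ induces isomorphisms on the classical invariants and profinite completion is functorial, $\hat f$ induces isomorphisms on the corresponding profinite invariants and is therefore a weak equivalence in $\hSh$.

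With (1) in hand, the forgetful functor $|\cdot|$ is right Quillen; hence it preserves fibrations (as the right adjoint in a Quillen pair) and, by Ken Brown's lemma, preserves weak equivalences between fibrant objects, which yields (2). Statement (3) is then the total derived adjunction attached to the Quillen pair, and since by (1) the completion functor already preserves all weak equivalences it coincides with its own total left derived functor on $\Hh$. The main technical obstacle I anticipate is the identification $\pi_1(\hat Z,x)\cong \widehat{\pi_1(Z,x)}$, because the profinite fundamental group is defined through the Galois theory of profinite covers rather than as a topological completion of the ordinary $\pi_1$; however, this comparison has already been established in \cite{profinhom} and can simply be invoked.
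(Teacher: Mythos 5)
Your proof is correct and is essentially the argument the paper intends: the paper gives no proof of Proposition \ref{adjcompletion} (it is stated as an immediate consequence of Theorem \ref{modelstructure} and the results of \cite{profinhom}, \cite{gspaces}, where the Quillen adjunction $(\compl,|\cdot|)$ is verified exactly by checking that completion preserves the levelwise monomorphisms and the three invariants of Definition \ref{defnwe}, with (2) and (3) then formal via Ken Brown's lemma and the derived adjunction). The only point to watch is that your extension of an equivalence relation by adjoining $B\setminus A$ as a single class is a set-level construction, which is legitimate here only because completion of simplicial sets is computed levelwise and cofibrations in $\hSh$ are merely levelwise monomorphisms; the analogous extension need not be a simplicial equivalence relation.
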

\subsection{Pointed profinite spaces}
Let $\hShp$ be the category of pointed profinite spaces, i.e. profinite spaces $X$ with a chosen map from the one-point space to $X$. Profinite completion of spaces induces in the obvious way a profinite completion functor $\compl:\Shp \to \hShp$, where $\Shp$ denotes the category of pointed spaces. \\
For pointed profinite spaces $X$ and $Y$, let $X\vee Y$ be the wedge of $X$ and $Y$ over the base point. The smash product $X\wedge Y$ is defined to be the quotient in $\hShp$ 
$$X\wedge Y:= (X\times Y)/(X\vee Y).$$
The category $\hShp$ also has a simplicial structure (cf. again \cite{dehon}, \S 1.2). Let $X$ and $Y$ be pointed profinite spaces. The mapping space $\map_{\hShp}(X,Y)$ is defined as the simplicial set whose set of $n$-simplices is given as the set of maps 
$$\map_{\hShp}(X,Y)_n=\Hom_{\hShp}(X\wedge \Delta[n]_+, Y)$$
where $\Delta[n]_+$ denotes the standard simplicial set with an additional disjoint base point.
This defines a functor 
$$\map_{\hShp}(-,-):\hShp^{\mathrm{op}}\times \hShp \to \Sh.$$
The tensor and cotensor structure are defined as follows. Let $K$ be a finite simplicial set, and $X$ a pointed profinite space. The tensor object $X\otimes K\in \hShp$ is defined as the smash product $X\wedge K_+$. The function object in $\hShp$ is defined as the pointed profinite space $\hom_{\hShp}(K,X)\in \hShp$, pointed by the constant map to the basepoint of $X$, whose set of $n$-simplices is given by the profinite set of maps
$$\hom_{\hShp}(K,X)_n = \Hom_{\hShp}(K_+\wedge \Delta[n]_+, X).$$ 
If $K$ is an arbitrary simplicial set and $X$ a pointed profinite space, we define the tensor object $X\otimes K$ to be the colimit in $\hShp$ of the profinite spaces $X\wedge K_{\alpha+}$ where $K_{\alpha}$ runs through the finite simplicial subsets of $K$ (see also \cite{dehon}, \S 1.2). 
The function object $\hom_{\hShp}(K,X)$ is defined to be the limit in $\hShp$ of the pointed profinite spaces $\hom_{\hShp}(K_{\alpha},X)$. \\
Let $X$ and $Y$ be pointed profinite spaces and let $K$ be a simplicial set. Then mapping space, tensor and function objects are connected by the natural bijections
$$\map_{\hShp}(X\otimes K, Y) \cong \map_{\Sh}(K, \map_{\hShp}(X,Y))$$
and 
$$\map_{\hShp}(Y, \hom_{\hShp}(K, X)) \cong \map_{\Sh}(K, \map_{\hShp}(X,Y)).$$
This defines the structure of a simplicial category on $\hShp$.\\
If the finite simplicial set $K$ is already equipped with a base point and $X$ is a pointed profinite space, we also denote by $\homp(K,X)\in \hShp$ the pointed profinite space whose set of $n$-simplices is given by the profinite set of maps
$$\hom_{\hShp}(K,X)_n = \Hom_{\hShp}(K\wedge \Delta[n]_+, X).$$
Moreover, if $X$ is a pointed profinite space and $K$ is an arbitrary pointed simplicial set, we define the pointed profinite space $\homp(K,X)\in \hShp$ to be the limit in $\hShp$
$$\homp(K,X)= \lim_{\alpha} \homp(K_{\alpha},X)$$
where $K_{\alpha}$ runs through the pointed finite simplicial subsets of $K$ such that $K$ is isomorphic to $\colim_{\alpha}K_{\alpha}$ as a pointed simplicial set.
If $K$ and $L$ are pointed simplicial sets and $X$ a pointed profinite space, we also have the following natural isomorphism
\begin{equation}\label{homhom}
\homp(K, \homp(L,X)) \cong \homp(K \wedge L,X) \cong \homp(L, \homp(K,X)).
\end{equation} 
\begin{example}
As an example of function objects, let $S^1$ be the simplicial circle, i.e. the quotient $S^1=\Delta[1]/\partial \Delta[1]$ of the standard simplex $\Delta[1]$ by its boundary. The pointed simplicial set $S^1$ is finite in each degree, i.e. it is a simplicial finite set and hence also an object in $\hShp$.  Taking the smash product with $S^1$ defines a functor $\hShp \to \hShp$, $X\mapsto S^1 \wedge X$. It is left adjoint to the functor $\hShp \to \hShp$ defined by sending a pointed profinite space $X$ to the function object $\hom_{\hShp}(S^1,X)$ in $\hShp$. We denote $\hom_{\hShp}(S^1,X)$ also by $\Omega X$. Hence for pointed profinite spaces $X$ and $Y$ there is a natural isomorphism
$$\map_{\hShp}(S^1\wedge X, Y) \cong \map_{\hShp}(X, \Omega Y).$$
\end{example}
As an under-category of $\hSh$, $\hShp$ inherits a model structure. We call a map in $\hShp$ a weak equivalence (cofibration, fibration) if its underlying map in $\hSh$ is a weak equivalence (cofibration, fibration). It follows from the general theory of model categories, as explained for example in \cite{hoveybook}, Proposition 1.1.8 and the dual of Lemma 2.1.21, that Theorem \ref{modelstructure} implies the following result.
\begin{theorem}\label{pointedmodelstructure}
The classes of weak equivalences, cofibrations and fibrations provide $\hShp$ with the structure of a fibrantly generated left proper simplicial model category. We denote the homotopy category by $\hHhp$.
\end{theorem}
The Quillen adjunction of Proposition \ref{adjcompletion} can be rephrased in terms of mapping spaces. Let $K$ be a pointed simplicial set and $X$ be a fibrant pointed profinite space. Then there is a natural isomorphism of fibrant simplicial sets
$$\map_{\hShp}(\hat{K}, X) \cong \map_{\Shp}(K, |X|).$$
Moreover, if $K=S^1$ is the simplicial circle, there is an isomorphism in $\Shp$
\begin{equation}
|\Omega X|=|\homp(S^1,X)| \cong \map_{\hShp}(S^1,X) \cong \map_{\Shp}(S^1, |X|) =: \Omega |X|.\end{equation}
\begin{remark}\label{cylinder}
Let $X$ be a pointed profinite space. The maps $d_0, d_1:\Delta[0] \rightrightarrows \Delta[1]$ and $s^0: \Delta[1] \to \Delta[0]$ induce maps $X\vee X \to X \wedge \Delta[1]_+$ and $X\wedge \Delta[1]_+ \to X$ and make $X\wedge \Delta[1]_+$ into a cylinder object for $X$ in $\hShp$. For the maps $X\to X\wedge \Delta[1]_+$ induced by $d_0$ and $d_1$ are cofibrations and weak equivalences and the induced map $X \wedge \Delta[1]_+ \to X$ is a weak equivalence in $\hShp$. Hence if $Y$ is a fibrant pointed profinite space, there is a natural bijection 
$$\pi_0\mapp(X,Y) \cong \Hom_{\hHhp}(X,Y).$$
Moreover, let $K$ be a pointed simplicial set, $X$, $Y$ pointed profinite spaces, and $Y$ fibrant in $\hShp$. If we denote the homotopy category of pointed simplicial sets by $\Hhp$, then there are natural bijections
$$\Hom_{\hHhp}(K\otimes X, Y) \cong \Hom_{\hHhp}(X, \homp(K,Y)),$$
$$\Hom_{\hHhp}(K\otimes X, Y) \cong \Hom_{\Hhp}(K, \map_{\hShp}(X,Y)),~\mathrm{and}$$
$$\Hom_{\hHhp}(\hat{K}, Y) \cong \Hom_{\Hhp}(K, |Y|).$$
\end{remark}
\subsection{An explicit fibrant replacement functor}
As indicated by Morel for pro-$p$-completion of spaces in \cite{ensprofin}, \S 2.1, p. 367, there is an explicit construction for the fibrant replacement in $\hSh$. This construction is based on the work of Quillen \cite{quillen2} and Dwyer-Kan \cite{dk}. We refer the reader to \cite{completion} for more details.\\
First, let $X$ be a reduced simplicial finite set. We denote by $\Gamma X$ the free loop group construction of $X$. When we apply the profinite completion of groups levelwise to the simplicial group $\Gamma X$, we obtain a simplicial profinite group denoted by $\hat{\Gamma}X$. Its profinite classifying space $\bar{W}\hat{\Gamma}X$ is a fibrant object in $\hSh$. It is equipped with a canonical map $\eta: X \to \bar{W}\hat{\Gamma}X$ and one can show that this is a weak equivalence of profinite spaces. The crucial point is that free groups are good in the sense of Serre which we will discuss below.\\
Second, let $X$ be an arbitrary simplicial finite set. We can generalize the previous argument by applying to $X$ the free loop groupoid construction $\Gamma X$ (see \cite{dk} or \cite{gj}, V \S 7, p. 322). It is defined in degree $n$ to be the free groupoid on generators $x: x_1 \to x_0$ with $x \in X_{n+1}$, subject to the relations $s_0 x_0 = 1_{x_0}$, $x_0 \in X_n$. The objects of this groupoid are just the vertices of X. One can define a functor $\Gamma X_n \to \Gamma X_m$ for each ordinal number morphism $[m] \to [n]$. Then we apply the degreewise profinite completion functor to groupoids. We call a groupoid $H$ finite if the set of objects of $H$ is finite and the group of automorphisms of any object is finite as well. The profinite completion of a groupoid is then the limit as a groupoid of the filtered system of its quotient groupoids which are finite groupoids in this sense (see also \cite{ensprofin}, p. 367). We obtain a simplicial profinite groupoid $\hat{\Gamma}X$. We apply again the classifying space functor $\bar{W}$ for simplicial groupoids of \cite{gj}, V, to get the fibrant profinite space $\bar{W}\hat{\Gamma}X$ and a canonical map $\eta: X \to \bar{W}\hat{\Gamma}X$ in $\hSh$.\\
Finally, let $X$ be an arbitrary profinite space. It is isomorphic in $\hSh$ to the limit $\lim_Q X/Q$ where $Q$ runs through the simplicial open equivalence relations on $X$, i.e. those simplicial relations such that $Q_n$ is an open subset of $X_n\times X_n$ for each $n$. We define its free simplicial profinite groupoid by 
$$\hat{G\Gamma}X := \lim_Q \hat{\Gamma}(X/Q).$$
Then we apply $\bar{W}$ to get a fibrant profinite space $RX$ equipped with a canonical map 
$$\eta: X\to RX:=\bar{W}\hat{\Gamma}X=\lim_Q \bar{W}\hat{\Gamma}(X/Q).$$
As before one can show that $\eta$ is a weak equivalence. Hence $\eta$ gives a fibrant replacement functor in $\hSh$. In particular, we can write $RX$ as a limit 
$$RX=\lim_{Q} \lim_{U_Q}\bar{W}(\Gamma(X/Q)/U_Q)$$
where $U_Q$ runs through the normal subgroupoids of $\Gamma(X/Q)$ such that the quotient $\Gamma(X/Q)/U_Q$ is a simplicial finite groupoid. Hence we can decompose a fibrant profinite space into a limit of simplicial finite sets which are fibrant in $\hSh$. 
Moreover, after taking Postnikov sections we can write $RX$ as a limit 
$$RX=\lim_{Q,n} \lim_{U_Q} \cosk_n \bar{W}(\Gamma(X/Q)/U_Q)$$
of simplicial finite sets which are also finite spaces, i.e. simplicial sets whose homotopy groups are all finite and only a finite number of them are nontrivial.\\
Finally, let $X$ be a pointed profinite space with basepoint given by a map $p:\ast \to X$ from the discrete simplicial set $\ast$ with a single vertex. The decomposition of $X$ as a limit of simplicial finite sets $X\cong \lim_QX/Q$ is then also a decomposition as a limit of pointed simplicial finite sets, since $p$ induces compatible maps $\ast \to X/Q$ which provide basepoints for each $X/Q$. Hence the map
$$\eta: X\to RX$$
provides in fact a fibrant replacement in the model structure of pointed profinite spaces such that $RX$ is a limit of pointed simplicial finite sets which are fibrant in $\hShp$. 
\subsection{Homotopy groups of profinite spaces}
Let $X$ be a pointed profinite space. The profinite fundamental group has been defined above. For $n \geq 2$, we define the $n$th homotopy group of $X$ as follows.
\begin{defn}\label{defhomotopygroups}
Let $X$ be a pointed profinite space and let $RX$ be a fibrant replacement of $X$ in the above model structure on $\hShp$. For $n\geq 2$, we define the $n$th profinite homotopy group of $X$  to be the group
$$\pi_n(X):=\pi_0(\Omega^n(RX)).$$
\end{defn}
As shown in \cite{profinhom}, this is consistent with the definition of $\pi_1$ as well. For $n\geq 2$, the higher homotopy groups $\pi_nX$ are abelian groups, since $\Omega^nX$ is a group object in $\hHhp$. 
\begin{lemma}\label{underlyingpi}
Let $X$ be a fibrant pointed profinite space. The profinite homotopy groups of $X$ defined in Definition \ref{defhomotopygroups} is naturally isomorphic to the usual homotopy group of the underlying fibrant simplicial set $|X|$. 
\end{lemma}
\begin{proof}
By Proposition \ref{adjcompletion}, the forgetful functor $|\cdot|:\hShp \to \Shp$ preserves fibrant objects and weak equivalences between fibrant objects. Since $X$ is a fibrant pointed profinite space, for every $n \geq 0$ we have an isomorphism of groups (respectively sets for $n=0$)
$$\pi_n(X) = \pi_0(\Omega^n(X)) \cong \pi_0(\Omega^n(|X|)) \cong \pi_n(|X|)$$
where $\pi_n(|X|)$ denotes the $n$th homotopy group of the simplicial set $|X|$.
\end{proof}
\begin{remark}
The assumption in the previous lemma that $X$ is fibrant in $\hShp$ of course crucial. If we start with an arbitrary pointed profinite space $Y$, then the homotopy group $\pi_n(|Y|)$ of its underlying space $|Y|$ is in general not isomorphic to the homotopy group $\pi_n(Y)$ of Definition \ref{defhomotopygroups}. Since the notion of weak equivalences in $\hShp$ is different from the one in $\Shp$, the fibrant replacement in $\hShp$ does not preserve the homotopy type of $|Y|$ in $\Hhp$
\end{remark}

Furthermore, the homotopy groups of Definition \ref{defhomotopygroups} carry a natural profinite structure. To simplify notations, let $X$ be fibrant in $\hShp$. Using the explicit fibrant replacement functor defined in the previous section, we can assume that $X$ is given as the limit $\lim_{\beta} X_{\beta}$ of pointed simplicial finite sets which are fibrant in $\hShp$. 
Since the underlying space of a fibrant object in $\hShp$ is a fibrant simplicial set, we see that the spaces underlying $X$ and each underlying space of $X_{\beta}$ are fibrant as simplicial sets. Furthermore, since $\hShp$ is a simplicial model category, $\homp(S^1, X_{\beta})$ is a simplicial finite set which is fibrant in $\hShp$. Hence 
$$\homp(S^1, X)=\homp(S^1, \lim_{\beta} X_{\beta}) = \lim_{\beta} \homp(S^1, X_{\beta})$$
is a cofiltered limit of fibrant simplicial finite sets. The fact that $\pi_n(X)$ is a profinite group is now implied by the following proposition due to Dehon and Lannes \cite{dehonlannes}, Proposition 1.1.3. 
\begin{prop}\label{pi0}
Let $\{ X_{\beta} \}_{\beta}$ be a cofiltered diagram of fibrant simplicial finite sets. Then the canonical map 
$$\pi_0(\lim_{\beta} X_{\beta}) \to \lim_{\beta} (\pi_0 X_{\beta})$$
is a bijection.
\end{prop}
%
%
\subsection{Homotopy limits of pointed profinite spaces}
%
%

Homotopy limits of diagrams of profinite spaces can be defined using the construction of Bousfield and Kan in \cite{bouskan}, XI. Let $I$ be a small category and $BI$ be the nerve of $I$. Recall that this is the simplicial set whose $n$-simplices are sequences
$$u=(i_0 \stackrel{\alpha_1}{\leftarrow} \ldots \stackrel{\alpha_n}{\leftarrow} i_n)$$
of morphisms in $I$. 
Moreover, for an object $i$ in $I$, one can form the over-category $I/i$ whose objects are maps $i_0 \to i$. The nerve $B(I/i)$ of this over-category is the simplicial set whose $n$-simplices are sequences
$$i \stackrel{\alpha}{\leftarrow} i_0 \stackrel{\alpha_1}{\leftarrow} \ldots \stackrel{\alpha_n}{\leftarrow} i_n$$ 
of morphisms in $I$. \\
Let $X(-):I\to \hShp$ be a functor from $I$ to pointed profinite spaces. 
The existence of cotensor objects in $\hShp$ allows to define the pointed profinite space 
$$\homp(B(I/-),X(-))\in \hShp$$ 
as the equalizer in $\hShp$ of the diagram of pointed profinite spaces
$$\prod_{i\in I} \homp(B(I/i),X(i)) \rightrightarrows \prod_{\alpha:i\to i' \in I} \homp(B(I/i),X(i'))$$
where the two maps are induced by the maps in $\hShp$
$$\homp(B(I/i),X(i)) \stackrel{X(\alpha)}{\longrightarrow} \homp(B(I/i), X(i'))$$
and 
$$\homp(B(I/i'),X(i')) \stackrel{B(I/\alpha)}{\longrightarrow} \homp(B(I/i), X(i')).$$ 
In analogy to \cite{bouskan}, XI \S 3, we make the following definition.
\begin{defn}\label{defholim}
Let $I$ be small category and $X(-): I \to \hShp$ be a functor. The homotopy limit of $X(-)$ is defined to be the pointed profinite space 
$$\holim_{i\in I} X(i):=\homp(B(I/-),X(-)).$$ 
\end{defn}
%
\begin{lemma}\label{holiminv}
Let $X(-)$ be a functor from a small category $I$ to pointed profinite spaces. If $X(i)$ is fibrant for every $i\in I$, then $\holim_{i\in I}X(i)$ is fibrant in $\hShp$.\\
Let $f(-):X(-) \to Y(-)$ be a natural transformation of functors from the small category $I$ to the full subcategory of fibrant pointed profinite spaces. Let 
$$f:=\holim_{i\in I}f(i): \holim_{i\in I}X(i) \to \holim_{i\in I}Y(i)$$
be the induced map on homotopy limits. If $f(i): X(i) \to Y(i)$ is a weak equivalence in $\hShp$ for every $i\in I$, then $f$ is a weak equivalence in $\hShp$.
\end{lemma}
\begin{proof}
This is proved in \cite{hirsch}, Theorems 19.4.1 and Theorem 19.4.2. See also Remark 19.1.6 and Proposition 16.6.6 in \cite{hirsch} that show that the definition of homotopy limits in \cite{hirsch}, \S 19, coincides with the construction in Definition \ref{defholim}. 
\end{proof}
\begin{lemma}\label{holimlemma}
Let $X(-): I \to \hShp$ be a small cofiltering diagram such that each $X(i)$ is connected and fibrant in $\hShp$. Then there is a natural isomorphism of homotopy groups of underlying simplicial sets for every $q\geq 0$
$$\pi_q(|\holim_{i\in I} X(i)|) \cong \lim_{i\in I} \pi_q(|X(i)|).$$
\end{lemma}
\begin{proof}
Since the right Quillen functor $|-|: \hShp \to \Shp$ of forgetting the profinite structure commutes with limits and cotensor objects, it also commutes with forming the homotopy limit of the diagram $X(-):I\to \hShp$, i.e. there is an isomorphism $\Shp$
$$|\holim_{i\in I} X(i)| \cong \holim_{i\in I} |X(i)|.$$
By \cite{bouskan} XI \S 7.1, since each $|X(i)|$ is a fibrant pointed simplicial set, there is a spectral sequence involving derived limits
\begin{equation}\label{bouskanss}
E_2^{s,t}=\left \{ \begin{array}{l@{\quad \quad}l}
                     \lim_I^s\pi_t (|X(i)|) & \mathrm{for}~0\leq s \leq t\\
                     0 & \mathrm{else}
                     \end{array} \right.
\end{equation}  
converging to $\pi_{s+t}\holim_i |X(i)|$. As we pointed out in Lemma \ref{underlyingpi}, the homotopy groups $\pi_t(|Y|)$ of the pointed simplicial set $|Y|$ underlying a fibrant pointed profinite space $Y$ satisfy 
$$\pi_t(|Y|)\cong \pi_t(Y)$$
where the right hand group is the profinite homotopy group of the fibrant pointed profinite space $Y$ of Definition \ref{defhomotopygroups}. This implies that the groups $\pi_t|X(i)|$ in (\ref{bouskanss}) are profinite groups for all $t$ and all $i$. Since the functor sending a small cofiltering diagram to its inverse limit is exact on the category of profinite groups (\cite{ribes} Proposition 2.2.4), spectral sequence (\ref{bouskanss}) degenerates to a single row and implies the desired isomorphism.
\end{proof}
\subsection{Comparison with the work of Artin-Mazur, Morel and Sullivan}
Let $\Hh_{\mathrm{fin}}$ denote the full subcategory of $\Hh$ of finite spaces, i.e. of simplicial finite sets whose homotopy groups are all finite and only a finite number of them are nontrivial. The morphisms are homotopy classes of maps. 
Let $\Hh_0$ denote the full subcategory of connected spaces. Artin and Mazur showed in \cite{artinmazur} that, for every space $X\in \Hh_0$, the functor 
$$\Hhf \to \Eh,~F\mapsto [X,F],$$ 
is pro-representable in $\Hhf$. The representing pro-object $\hat{X}^{\mathrm{AM}}\in \mathrm{pro}-\Hhf$ is called the (Artin-Mazur) profinite completion of $X$. 
Then Sullivan showed in \cite{sullivan} that the underlying diagram in $\Hhf$ of $\hat{X}^{\mathrm{AM}}$ has a limit $\hat{X}^{\mathrm{Su}}$ in $\Hh$, which is called the profinite completion of Sullivan (see also \cite{ensprofin}).\\ 
The previously constructed fibrant replacement functor in $\hSh$ for the model structure of Theorem \ref{modelstructure} provides a rigid model of the Artin-Mazur and Sullivan profinite completion and extends it to nonconnected spaces. This rigidification is obtained as follows.\\
Given a profinite space $Z$, we have seen above that we can decompose it into an inverse system of fibrant profinite spaces with finite homotopy groups. Together with taking finite Postnikov sections, this yields a decomposition of $Z$ into an inverse system of finite spaces in $\Sh$. By applying this to the completion $\hat{X}$ of a space $X$, we get a functor 
$$T: \Sh \to \hSh \to \mathrm{pro}-\Sh_{\mathrm{fin}}$$ 
where $\Sh_{\mathrm{fin}}$ is the full subcategory of $\Sh$ of finite spaces. Now we can consider this functor on the homotopy level 
$$T: \Hh \to \hHh \to \mathrm{pro}-\Hh_{\mathrm{fin}}.$$ 
It follows immediately from the previous results on profinite spaces and \cite{artinmazur}, Theorem 4.3, that $T|_{\Hh_0}$ is isomorphic to the Artin-Mazur completion functor. Moreover, this implies that the fibrant replacement of $\hat{X}$ in $\hSh$ is a rigid model for the Sullivan completion of $X$, i.e. that $|R\hat{X}|$ is canonically isomorphic to $\hat{X}^{\mathrm{Su}}$ in $\Hh$. Hence $|R\hat{X}|$ provides a rigid model for profinite completion. \\
In \cite{ensprofin} Morel proved that there is a model structure on $\hSh$ for each prime number $p$ in which the weak equivalences are the maps that induce isomorphisms on continuous $\Z/p$-cohomology. The generating fibrations are given by the canonical maps $L(\Z/p,n) \to K(\Z/p,n+1)$, $K(\Z/p,n)\to \ast$, and trivial fibrations are given by the maps $L(\Z/p,n)\to \ast$ for every $n\geq 0$ (see also \cite{goersscomp} and \cite{profinhom}). The fibrant replacement functor $R^p$ of \cite{ensprofin} yields a rigid version of the pro-$p$-finite-completion of Artin-Mazur and Sullivan. \\ 
An extended discussion of this comparison and further aspects of profinite completion of spaces can be found in \cite{completion}.
\subsection{Completion of spaces versus completion of groups}
Beside the completion functor of spaces $\compl: \Sh \to \hSh$ there is a well-known profinite completion functor for discrete groups. For lack of better notation we will also denote the profinite completion of a group $G$ by $\hat{G}$. It is equipped with a natural homomorphism $G\to \hat{G}$ which is universal among continuous homomorphisms from $G$ to profinite groups.\\ 
Given a pointed space $X\in \Shp$, the homotopy groups of Definition \ref{defhomotopygroups} of its profinite completion $\hat{X}\in \hShp$ are profinite groups. Hence the induced map $\pi_tX \to \pi_t\hat{X}$ factors through the group completion of $\pi_tX$, i.e. there is a commutative diagram
$$\xymatrix{
\pi_tX\ar[dr] \ar[rr] & &\pi_t\hat{X}\\ 
  & \widehat{\pi_tX}\ar[ur]_{\varphi_t}. & }
 $$
It is a fundamental question if completion of spaces and groups commute. Unfortunately, $\varphi_t$ is not an isomorphism in general. A similar phenomenon is well-known for group completion and cohomology. In \cite{serre}, this led Serre to call a group $G$ good if the induced map $\psi:H^*(\hat{G};M)\to H^*(G;M)$ between continuous and discrete group cohomology is an isomorphism for every finite $G$-module $M$. It turns out that after modifying slightly the notion of good groups by considering the action of the fundamental group on the higher homotopy groups, one gets a sufficient condition such that the completion of spaces commutes with the one of groups.\\ 
Following \cite{sullivan}, for a pointed space $X$, we call $\pi_1:=\pi_1X$ a good fundamental group, if it is a good group in the sense of Serre above such that  
$$H^i(\hat{\pi}_1;M)\cong H^i(\pi_1;M)$$ 
is a finite group for all finite $\pi_1$-modules $M$ and all $i\geq0$. \\
Let $\pi_n:=\pi_nX$, $n \geq 2$, be a higher homotopy group of $X$. It carries a canonical action of $\pi_1$. Let $\Ph$ be the filtered set of finite $\pi_1$-quotients of $\pi_n$. We denote by $\hat{\pi}^{\pi_1}_n:=\lim_{Q\in\Ph}Q$ the $\pi_1$-completion of $\pi_n$. This is, in particular, a profinite group on which $\pi_1$ acts continuously. The $\pi_1$-module $\pi_n$ is called a good higher homotopy group if
$$H^i(\pi_n;A)\cong H^i(\hat{\pi}^{\pi_1}_n;A)$$ 
is an isomorphism of finite groups for all finite coefficient groups $A$ and all $i\geq0$. With these definitions we reinterpret the following result of Sullivan \cite{sullivan}, Theorem 3.1.
\begin{theorem}\label{profinitecompletion}
Let $X$ be a connected pointed space.\\ 
(a) The canonical map $\varphi_1: \widehat{\pi_1X} \to \pi_1\hat{X}$ is an isomorphism of profinite groups, i.e. $\pi_1X \to \pi_1\hat{X}$ equals group completion.\\
(b) If $X$ has a good fundamental group and good higher homotopy groups, then $\varphi_t: \widehat{\pi_tX} \to \pi_t\hat{X}$ is an isomorphism of profinite groups for every $t$.
\end{theorem}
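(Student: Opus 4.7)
My plan for part (a) is to use the description of the profinite fundamental group given in \cite{profinhom}: $\pi_1(\hat{X},x)$ is defined as the limit of the automorphism groups of the finite Galois coverings of $\hat{X}$ at $x$. Using the $\compl$--$|\cdot|$ adjunction of Proposition \ref{adjcompletion}, I would identify finite Galois coverings of $\hat{X}$ with finite Galois coverings of the space $X$, which in turn are classified by finite-index normal subgroups $N \triangleleft \pi_1 X$. Passing to the inverse limit identifies $\pi_1(\hat{X})$ with $\lim_{N} \pi_1 X/N = \widehat{\pi_1 X}$, and one checks that the resulting isomorphism is exactly $\varphi_1$.

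For part (b), I would proceed by induction on the Postnikov tower of $X$, writing $X \simeq \holim_n X_n$ with principal fibrations $K(\pi_n,n) \to X_n \to X_{n-1}$ classified by $k$-invariants with local coefficients in the $\pi_1$-module $\pi_n$. I would apply profinite completion level-wise to obtain a compatible tower in $\hSh$ and compute the profinite homotopy groups inductively.

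The base case $X_1 \simeq K(\pi_1,1)$ is where Serre's goodness of $\pi_1$ is invoked: it ensures that for every local system of finite abelian groups $\Mh$ one has $H^{\ast}(K(\pi_1,1);\Mh) \cong H^{\ast}(K(\widehat{\pi_1},1);\Mh)$ and that these groups are finite. Combined with the cohomological criterion for weak equivalences in Definition \ref{defnwe} and part (a), this identifies $\widehat{X_1}$ with a profinite model of $K(\widehat{\pi_1},1)$ in $\hHh$, so $\pi_1(\widehat{X_1}) \cong \widehat{\pi_1 X}$. For the inductive step I would assume $\widehat{X_{n-1}}$ already has the claimed profinite homotopy groups and classify the completed principal fibration $\widehat{X_n} \to \widehat{X_{n-1}}$ by applying $\compl$ to the $k$-invariant. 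The goodness hypothesis on $\pi_n$, namely the cohomological agreement of $\pi_n$ and $\hat{\pi}^{\pi_1}_n$, is exactly what makes the completed $k$-invariant land, up to weak equivalence in $\hSh$, in a profinite $K(\hat{\pi}^{\pi_1}_n, n{+}1)$; its homotopy fibre is then a profinite $K(\hat{\pi}^{\pi_1}_n, n)$. The long exact homotopy sequence of the fibration in $\hHh$ then reads off $\pi_n(\widehat{X_n}) \cong \hat{\pi}^{\pi_1}_n$, and a separate cohomological Whitehead-style argument, again drawing on goodness, identifies $\hat{\pi}^{\pi_1}_n$ with $\widehat{\pi_n}$ as profinite groups (the natural surjection becomes an iso once the cohomologies with all finite coefficient modules coincide).

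Finally, I would pass to the limit $\hat{X} \simeq \holim_n \widehat{X_n}$ in $\hSh$. Since each $\widehat{X_n}$ has only finitely many non-zero and profinite homotopy groups, the relevant $\lim^1$ for inverse systems of finite groups vanishes and $\pi_t(\hat{X}) \cong \pi_t(\widehat{X_{t+1}}) \cong \widehat{\pi_t X}$, exhibiting $\varphi_t$ as an isomorphism. The main obstacle will be the inductive step: rigorously justifying that completing the classifying $k$-invariant yields the classifying map of the completed fibration, and that $\hat{\pi}^{\pi_1}_n \cong \widehat{\pi_n}$ as profinite groups. Both points rest on the goodness conditions, which are designed precisely so that the cohomology classifying the principal fibrations, and the representing Eilenberg--MacLane spaces, are faithfully tracked by profinite completion.
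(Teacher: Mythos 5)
Your argument for part (a) is essentially the paper's: the paper also deduces it directly from the covering-space definition of the profinite fundamental group in \cite{profinhom} and the parallel description of $\pi_1X$, so that finite Galois coverings of $\hat{X}$ correspond to finite-index normal subgroups of $\pi_1X$.

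For part (b) you take a genuinely different route. The paper does not run a Postnikov induction at all: it simply invokes Sullivan's Theorem 3.1 from \cite{sullivan} together with the fact, established in the preceding subsection, that the fibrant replacement $R\hat{X}$ in $\hSh$ is a rigid model for the Sullivan completion, i.e.\ $|R\hat{X}|\cong \hat{X}^{\mathrm{Su}}$ in $\Hh$. That identification is the real content being used, and it makes the proof two lines. What you propose is, in effect, a self-contained reconstruction of Sullivan's theorem inside $\hSh$; that buys independence from \cite{sullivan} but transfers all of its technical burden onto you. Two points in your sketch are genuine gaps rather than routine verifications. First, $\compl$ is a left adjoint, so it does not commute with homotopy limits; the equivalence $\hat{X}\simeq \holim_n\widehat{X_n}$ over the completed Postnikov tower is not formal and is exactly where the finiteness built into the goodness hypotheses must enter (a $\lim^1$ vanishing statement alone does not give you that the comparison map is a weak equivalence in the sense of Definition \ref{defnwe}, which is tested on cohomology with finite local coefficients). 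Second, the natural map $\widehat{\pi_n}\to\hat{\pi}^{\pi_1}_n$ from the full profinite completion to the $\pi_1$-completion (a limit over the smaller system of finite $\pi_1$-invariant quotients) is not an isomorphism for free, and your ``cohomological Whitehead-style argument'' for it is not spelled out; this comparison is part of what Sullivan's goodness hypotheses are calibrated to control. If you intend to avoid citing Sullivan you must close both of these; otherwise the efficient path is the paper's, namely to prove once that $R\hat{X}$ models $\hat{X}^{\mathrm{Su}}$ and then quote Sullivan.
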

\begin{proof}
The first assertion follows immediately from the definition of the profinite fundamental group of a profinite space via profinite covering spaces with finite fibres and the analogous description of the usual fundamental group of $X$ (see \cite{profinhom}). The second assertion follows from Theorem 3.1 in \cite{sullivan} together with the fact discussed above that $|R\hat{X}|$ is a rigid model for the profinite completion of Sullivan, i.e.\,that $|R\hat{X}|$ is canonically isomorphic to $\hat{X}^{\mathrm{Su}}$ in $\Hh$. 
\end{proof}

%
\subsection{A concrete profinite replacement functor}
The result of Sullivan shows that profinite completion of spaces with finite homotopy groups induces a weak equivalence of underlying spaces. For a slightly smaller class of spaces, we have a more concrete description of a profinite model. \\ 
We recall that a space $X$ is called simple if it has an abelian fundamental group and if the action of the fundamental group on the higher homotopy groups is trivial. 
\begin{lemma}\label{finitereplacement}
Let $X$ be a connected simple pointed space whose homotopy groups are all finite. Then there is a pointed simplicial finite set which is fibrant in $\hShp$ and a pointed map $X \to FX$ which is a weak equivalence of underlying simplicial sets. In particular, it induces an isomorphism $\pi_*X \cong \pi_*FX$ of homotopy groups of the underlying simplicial sets.\\ 
The assignment $X\mapsto FX$ is functorial in the sense that given a map $g:X\to Y$ between connected simple pointed spaces with finite homotopy groups, there is a map $F(g)$ in $\hShp$ such that the following diagram of underlying simplicial sets commutes
$$\xymatrix{
X\ar[d] \ar[r]^g & Y \ar[d] \\
FX \ar[r]_{F(g)} & FY.}$$
\end{lemma}
\begin{proof}
We can assume that $X$ is a fibrant simplicial set. Let $\pi_n:=\pi_nX$ be the $n$th finite homotopy group of $X$. We define the pointed profinite space $FX$ as the limit in $\hShp$ of a specific Postnikov tower $\{X(n)\}_{n\geq 1}$ of $X$. Let $\cosk_nX\in\Shp$ be the $n$th coskeleton of $X$. 
It is equipped with a natural pointed maps $X\to \cosk_nX$ and $\cosk_{n+1}X\to \cosk_nX$ for each $n\geq 0$. The fibrant pointed space $X$ is isomorphic to the limit $\lim_n \cosk_nX$. 
Moreover, the map $\cosk_nX \to \cosk_{n-1}X$ sits in a homotopy pullback square in $\Shp$
\begin{equation}\label{cosknpb}
\xymatrix{
\cosk_nX \ar[d] \ar[r] & WK(\pi_n,n) \ar[d]^{q_n} \\
\cosk_{n-1} \ar[r]_{k_n} & K(\pi_n,n+1).}
\end{equation} 
The map $q_n$ is the universal bundle over the simplicial group $K(\pi_n,n+1)$. For a simplicial group $\Gamma$, the contractible space $W\Gamma$ is defined by 
$$(W\Gamma)_n = \Gamma_n \times \Gamma_{n-1} \times \ldots \times \Gamma_0.$$ 
The map $k_n$ is the $k$-invariant defined by a class 
$$[k_n]\in H^{n+1}(\cosk_{n-1}X; \pi_n)$$
in the cohomology of $\cosk_{n-1}X$ with coefficients in $\pi_n$. \\
Now we define the pointed profinite spaces $X(n)$ together with pointed maps $j_n:\cosk_nX \to X(n)$ which are weak equivalences of underlying pointed simplicial sets. For $n=1$, we define $X(1):=B\pi_1X$. It is a pointed simplicial finite set and a fibrant object in $\hShp$. Choosing any weak equivalence $\cosk_1X \to B\pi_1X$ of simplicial sets provides a map $j_1: \cosk_1X \to X(1)$. \\
For $n\geq 2$, assume we are given $X(n-1)$ and a map $j_{n-1}:\cosk_{n-1} \to X(n-1)$. Up to homotopy there is a factorization in the category of pointed simplicial sets
$$\xymatrix{
\cosk_{n-1}X \ar[d]^{k_n} \ar[r]^{j_{n-1}} & X(n-1) \ar[dl]\\
K(\pi_n,n+1). & }
$$
The space $X(n)$ and the map $X(n) \to X(n-1)$ is then defined as the pullback of the diagram of simplicial finite sets
\begin{equation}\label{Xnpb}
\xymatrix{
X(n) \ar[d] \ar[r] &  WK(\pi_n,n) \ar[d]^{q_n} \\
X(n-1) \ar[r]_{k_n} & K(\pi_n,n+1).}
\end{equation} 
Since $\pi_n$ is a finite group, $K(\pi_n,n+1)$ and $WK(\pi_n,n)$ are finite in each degree. Moreover, the map  
$$q_n: WK(\pi_n,n) \to K(\pi_n,n+1)$$ 
is a fibration in $\hShp$ by \cite{completion}, Theorem 3.7. 
Hence the pullback of (\ref{Xnpb}) can be constructed in $\hShp$ and $X(n)$ is a simplicial finite set which is a fibrant object in $\hShp$.  
Since the map 
$$\cosk_nX \to WK(\pi_n,n) \times_{K(\pi_n,n+1)} \cosk_{n-1}X$$ 
is a weak equivalence, the induced map $j_n: \cosk_nX \to X(n)$ is a weak equivalence of underlying simplicial sets. 
We define $FX$ to be the limit
$$FX:=\lim_n X(n).$$ 
Since the finite set of $m$-simplices of $X(n)$ is isomorphic to the finite set of $m$-simplices of $X(n-1)$ for $m\leq n-1$, $FX$ is a simplicial finite set, and $FX$ is a fibrant object in $\hShp$, since it is the limit of a tower of fibrations in $\hShp$.  
Since the two maps $X\to \lim_n \cosk_nX$ and $\lim_n \cosk_nX \to \lim_n X(n)$ are weak equivalences of underlying simplicial sets, the associated map $\varphi:X \to FX$ is a weak equivalence of underlying simplicial sets and hence induces an isomorphism $\pi_*X \cong \pi_*FX$. Finally, we deduce the last assertion of the lemma from the fact that all the constructions involved in order to define $FX$ are functorial. 
\end{proof}

Lemma \ref{finitereplacement} provides a functorial construction $F$ that associates a weakly equivalent profinite space to a connected simple pointed space with finite homotopy groups. Since we also want to apply this construction to spectra, we need to understand its behavior with respect to taking loop spaces.
\begin{lemma}\label{Floop}
{\rm (a)} Let $X$ and $Y$ be connected simple pointed spaces with finite homotopy groups and let $f:X\to Y$ be a weak equivalence of pointed simplicial sets. Then the induced map $F(f)$ is a weak equivalence in $\hShp$.\\
{\rm (b)} Let $W$ be a connected simple pointed space with finite homotopy groups. Then there is a natural weak equivalence $F(\Omega X)\stackrel{\sim}{\to} \Omega(FX)$ in $\hShp$.\\
{\rm (c)} If $V\to \Omega W$ is a weak equivalence between connected simple pointed spaces with finite homotopy groups, then the composite map $FV\to F(\Omega W)\to \Omega(FW)$ is a weak equivalence in $\hShp$.
\end{lemma}
\begin{proof}
(a) Let $\{X(n)\}_n$ and $\{Y(n)\}_n$ be the two towers of fibrations corresponding to $X$ and $Y$, respectively, used in the proof of Lemma \ref{finitereplacement} in order to construct $FX$ and $FY$. For each $n$, the map $X(n)\to Y(n)$ of simplicial finite sets induced by $f$ is a weak equivalence in $\Shp$ and in $\hShp$. 
Hence the map of limits of the towers of fibrations 
$$FX=\lim_nX(n) \to \lim_nY(n)=FY$$
is a weak equivalence in $\hShp$ as well.\\
(b) This follows now from the fact that the map $K(\pi, n) \to \Omega(K(\pi,n+1))$ is a weak equivalence in $\Shp$ and $\hShp$ for every finite group $\pi$ and every $n$.\\
(c) This is a consequence of (a) and (b).
\end{proof}

\subsection{Profinite $G$-spaces}
Let $G$ be a fixed profinite group. Let $S$ be a profinite set on which $G$ acts continuously, i.e.\, the group $G$ is acting on $S$ via a continuous map $\mu:G \times S \to S$. In this situation we say that $S$ is a profinite $G$-set. If $X$ is a profinite space and $G$ acts continuously on each $X_n$ such that the action is compatible with the face and degeneracy maps, then we call $X$ a profinite $G$-space. We denote by $\hShg$ the category of profinite $G$-spaces. The morphism are given by $G$-equivariant maps of profinite spaces. \\ 
While a discrete $G$-space $Y$ is characterized as the colimit over the fixed point spaces $Y^U$ over all open subgroups, a profinite $G$-space X is the limit over its orbit spaces $X_U$. More explicitly, for an open subgroup $U$ of $G$, let $X_U$ be the quotient space under the action by $U$, i.e.\,the quotient $X/\sim$ with $x \sim y$ in $X$ if both are in the same orbit under $U$. It is easy to show that the canonical map $\phi:X \to \lim_U X_U$ is a homeomorphism, where $U$ runs through the open subgroups of $G$.\\
Moreover, we have the following lemma which was proven in \cite{completion}, Lemma 4.2. 
\begin{lemma}\label{5.6.4}
Let $G$ be a profinite group and $X$ a profinite $G$-space. Then $X$ has a decomposition 
$$X =\lim_{Q_G \in \Rh_G(X)} X/Q_G$$
in $\hShg$ as an inverse limit of simplicial finite $G$-quotient sets  where $\Rh_G(X)$ denotes the set of $G$-invariant simplicial open equivalence relations of $X$.
\end{lemma}
%
The category of profinite $G$-spaces has a simplicial structure. Let $X$ and $Y$ be profinite $G$-spaces. The mapping space $\map_{\hShg}(X,Y)$ is defined as the simplicial set whose set of $n$-simplices is given as the set of $G$-equivariant maps 
$$\map_{\hShg}(X,Y)_n=\Hom_{\hShg}(X\times \Delta[n], Y)$$
where $\Delta[n]$ has trivial $G$-action. This defines a functor 
$$\map_{\hShg}(-,-):\hShg^{\mathrm{op}}\times \hShg \to \Sh.$$
Let $K$ be a finite simplicial set, and $X$ a profinite $G$-space. We consider $K$ as a profinite $G$-space with trivial $G$-action. The tensor object $X\otimes K\in \hShg$ is defined as the product $X\times K$ in $\hShg$ with the diagonal $G$-action. The function object in $\hShg$ is defined as the profinite $G$-space $\hom_{\hShg}(K,X)\in \hShg$ whose set of $n$-simplices is given by the profinite set of maps
$$\hom_{\hShg}(K,X)_n = \Hom_{\hSh}(K\times \Delta[n], X)$$
on which $G$ acts continuously via its continuous action on the target $X$.\\ 
If $K$ is an arbitrary simplicial set, isomorphic to the colimit $\colim_{\alpha}K_{\alpha}$ of its finite simplicial subsets $K_{\alpha}$, and $X$ a profinite $G$-space, the tensor object $X\otimes K$ is the colimit in $\hShg$ of the profinite $G$-spaces $X\times K_{\alpha}$. 
The function object $\hom_{\hShp}(K,X)$ is defined to be the limit in $\hShp$ of the pointed profinite spaces $\hom_{\hShp}(K_{\alpha},X)$. \\
Let $X$ and $Y$ be profinite $G$-spaces and let $K$ be a simplicial set. Then mapping space, tensor and function objects are connected by the natural bijections
$$\map_{\hShg}(X\otimes K, Y) \cong \map_{\Sh}(K, \map_{\hShg}(X,Y))$$
and 
$$\map_{\hShg}(Y, \hom_{\hShg}(K, X)) \cong \map_{\Sh}(K, \map_{\hShg}(X,Y)).$$
This defines the structure of a simplicial category on $\hShp$.\\
In order to get a model structure on $\hShg$ one can find explicit sets of generating fibrations and trivial fibrations. They arise naturally by considering continuous $G$-actions on the corresponding generating sets for the model structure on $\hSh$. The following result was proven in \cite{gspaces}.
\begin{theorem}
There is a fibrantly generated left proper simplicial model structure on the category of profinite $G$-spaces such that a map $f$ is a weak equivalence (fibration) in $\hShg$ if and only if its underlying map is a weak equivalence (fibration) in $\hSh$. A map $f:X\to Y$ is a cofibration in $\hShg$ if and only if $f$ is a levelwise injection, and the action of $G$ on $Y_n -f(X_n)$ is free for each $n \geq 0$. We denote its homotopy category by $\hHhg$. 
\end{theorem}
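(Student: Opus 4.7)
The plan is to adapt the Morel-style fibrantly generated machinery from Theorem \ref{modelstructure} to the equivariant category $\hShpg$, by constructing explicit generating sets of fibrations and trivial fibrations that respect the continuous profinite $G$-action, applying the cosmall object argument to obtain factorizations, and then verifying that the class of cofibrations recovered via the lifting property against generating trivial fibrations coincides with the combinatorial description in the theorem. The forgetful functor $U:\hShpg\to\hShp$ sits in a double adjunction $L\dashv U\dashv R$, with left adjoint $L(Z)=G_+\wedge Z$ carrying the free translation action away from the basepoint, and right adjoint $R(Z)=\Map(G_+,Z)$ with the usual translation action; both sides of this chain feed into the construction.

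For the generating sets in $\hShpg$, I would combine two families of maps derived from the generators $\{p_i\}$ and $\{q_j\}$ of fibrations and trivial fibrations in $\hShp$. The first consists of the maps $R(p_i)$ and $R(q_j)$, whose sources and targets carry the free translation $G$-action and whose lifting property detects that the underlying map is a (trivial) fibration in $\hShp$. The second consists of variants built with the finite $G$-sets $G/U$ for open normal $U\leq G$, with $G$ acting through its finite quotient; these enforce compatibility with the profinite topology on $G$ at each finite stage. The cosmall object argument then applies, because every domain and codomain among the generators is levelwise a cofiltered limit of finite $G/U$-sets and profinite sets are compact. Two-out-of-three and closure of weak equivalences under retracts, left properness, and the simplicial enrichment transfer from $\hShp$ once one observes that cofibrations are detected levelwise and that the function complex in $\hShpg$ is the one in $\hShp$ equipped with the diagonal continuous $G$-action.

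The main obstacle is the explicit identification of cofibrations with the level-wise injections $f:X\to Y$ for which $G$ acts freely on $Y_n-f(X_n)$. One direction proceeds by testing $f$ against generators of the second family: if some element of $Y_n\setminus f(X_n)$ had nontrivial stabilizer in $G$, one can exhibit a concrete generating trivial fibration against which $f$ fails to admit a continuous $G$-equivariant lift. The harder converse requires, given such an $f$ and an arbitrary trivial fibration $p$ in $\hShpg$, the construction of a continuous $G$-equivariant lift. One first produces an underlying lift in $\hShp$, which is possible because the underlying $Up$ is a trivial fibration there. One then promotes it to a $G$-equivariant lift by choosing values on a continuous transversal for the free $G$-action on the complement and extending equivariantly along the orbits; continuity of the resulting lift is guaranteed by its compatibility with the finite quotients $G/U$, which is exactly what the second family of generators encodes.
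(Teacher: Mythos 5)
The paper itself gives no proof of this theorem: it is imported from the reference on continuous group actions on profinite spaces, with only the indication that the generating (trivial) fibrations ``arise naturally by considering $G$-actions on the corresponding generating sets for the model structure on $\hSh$.'' Your overall architecture (fibrantly generated structure, weak equivalences created by the forgetful functor, generators indexed by open normal subgroups $U\leq G$) matches that construction, but there are two genuine gaps. The first is the claimed double adjunction $L\dashv U\dashv R$ with $R(Z)=\Map(G_+,Z)$. For an infinite profinite group $G$ and a profinite set $Z$ with at least two elements, the set of continuous maps $G\to Z$ is \emph{not} profinite: it is a dense, non-closed subset of the compact space $Z^G$ (e.g.\ the locally constant functions $\Z_p\to\{0,1\}$ form a countable dense subset of an uncountable Cantor set). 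The paper itself stresses exactly this point in Section~4 when it warns that $\hom(X,E)$ carries no profinite structure. Consequently your ``first family'' of generators $R(p_i)$, $R(q_j)$ does not live in $\hShpg$ at all, and the division of labor you propose collapses. Only coinduction from finite quotients, $\Map(G/U_+,-)$, stays inside profinite $G$-spaces; by adjunction, lifting against $\Map(G/U_+,q)$ tests the orbit map $X_U\to Y_U$ against $q$, so the entire burden of detecting the underlying weak equivalence and fibration must be carried by the finite-quotient family together with the homeomorphism $X\cong\lim_U X_U$. That limit argument over the tower of orbit spaces is the technical heart of the cited proof and is missing from your sketch.

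The second gap is the converse half of the cofibration characterization. Taking a non-equivariant lift and ``promoting'' it by redefining it on a continuous transversal and extending along orbits does not work as stated: the modified map must still commute with all face and degeneracy operators, and the faces and degeneracies of simplices in a transversal need not lie in that transversal, so their values are already forced and will in general conflict with what equivariance dictates. The workable argument is a skeletal induction: one uses that a free continuous action of a profinite group admits a continuous section of the orbit map to write $f$ as built from free cells $G\times(\Delta[n],\partial\Delta[n])$ (approximated through $G/U$-cells), and then solves the lifting problem against a trivial fibration cell by cell. Likewise, the ``only if'' direction needs an explicit test object --- a coinduced trivial fibration $\Map(G/U_+,q)$ for $U$ open and contained in a putative nontrivial stabilizer --- rather than the bare assertion that one ``can exhibit'' it. With these two repairs your strategy would align with the cited proof; as written, both steps fail.
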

Let $\hShpg$ be the category of pointed profinite $G$-spaces. The objects of $\hShpg$ are profinite $G$-spaces that are equipped with a base point that is fixed under $G$. The morphism in $\hShpg$ are the morphisms of profinite $G$-spaces that preserve the base points. Let $X$ and $Y$ be pointed profinite $G$-spaces. The smash product $X\wedge Y$ is again a pointed profinite $G$-space on which $G$ acts via the diagonal action. The mapping space $\map_{\hShp}(X,Y)$ is defined as the simplicial set whose set of $n$-simplices is given as the set of maps 
$$\map_{\hShpg}(X,Y)_n=\Hom_{\hShpg}(X\wedge \Delta[n]_+, Y)$$
where $\Delta[n]_+$ is considered as a pointed profinite $G$-space with trivial $G$-action. 
This defines a functor 
$$\map_{\hShpg}(-,-):\hShpg^{\mathrm{op}}\times \hShpg \to \Sh.$$
Let $K$ be a finite simplicial set and $X$ a pointed profinite $G$space. The tensor object $X\otimes K\in \hShpg$ is defined as the smash product $X\wedge K_+$ where $K_+$ is considered as a pointed profinite $G$-space with trivial $G$-action. The function object in $\hShpg$ is defined as the pointed profinite $G$-space $\hom_{\hShpg}(K,X)\in \hShpg$ whose set of $n$-simplices is given by the profinite set of maps
$$\hom_{\hShpg}(K,X)_n = \Hom_{\hShp}(K_+\wedge \Delta[n]_+, X)$$
on which $G$ acts continuously via its action on the target $X$.\\ 
If $K$ is an arbitrary simplicial set isomorphic to the colimit $\colim_{\alpha}K_{\alpha}$ of its finite simplicial subsets $K_{\alpha}$, and $X$ a pointed profinite $G$-space, we define the tensor object $X\otimes K$ to be the colimit in $\hShpg$ of the profinite $G$-spaces $X\wedge K_{\alpha}$. 
The function object $\hom_{\hShpg}(K,X)$ is defined to be the limit in $\hShpg$ of the pointed profinite $G$-spaces $\hom_{\hShpg}(K_{\alpha},X)$. \\
Let $X$ and $Y$ be pointed profinite spaces and let $K$ be a simplicial set. Then mapping space, tensor and function objects are connected by the natural bijections
$$\map_{\hShpg}(X\otimes K, Y) \cong \map_{\Sh}(K, \map_{\hShpg}(X,Y))$$
and 
$$\map_{\hShpg}(Y, \hom_{\hShpg}(K, X)) \cong \map_{\Sh}(K, \map_{\hShpg}(X,Y)).$$
Again, if the finite simplicial set $K$ is already equipped with a base point and $X$ is a pointed profinite $G$-space, we also denote by $\hompg(K,X)\in \hShpg$ the pointed profinite $G$-space whose set of $n$-simplices is given by the profinite set of maps
$$\hom_{\hShpg}(K,X)_n = \Hom_{\hShp}(K\wedge \Delta[n]_+, X)=\lim_{\alpha}\Hom_{\hShp}(K_{\alpha}\wedge \Delta[n]_+, X)$$
on which $G$ acts continuously via its action on the target. 
%
%
\begin{example}\label{exGomega}
We consider the simplicial circle $S^1$ as a pointed simplicial finite set with trivial $G$-action. Taking the smash product with $S^1$ defines a functor $\hShpg \to \hShpg$, $X\mapsto S^1 \wedge X$. It is left adjoint to the functor $\hShpg \to \hShpg$ defined by sending a pointed profinite $G$-space $X$ to the function object $\hom_{\hShpg}(S^1,X)$ in $\hShpg$. In particular, the profinite loop space $\Omega X$ inherits form $X$ the structure as a pointed profinite $G$-space. 
\end{example}
As in the nonequivariant case, the category of pointed profinite $G$-spaces inherits a model structure from $\hShg$, since $\hShpg$ is an under-category of $\hShg$.
\begin{theorem}\label{Gmodelunstable}
The category $\hShpg$ has the structure of a fibrantly generated left proper simplicial model category for which a map is a weak equivalence (cofibration, fibration) if and only if its underlying unpointed map is a weak equivalence (cofibration, fibration) in $\hShg$. We denote the homotopy category by $\hHhpg$.
\end{theorem}
\begin{remark}\label{Gcylinder}
Let $X$ be a cofibrant pointed profinite $G$-space. As discussed in Remark \ref{cylinder}, the pointed profinite $G$-space $X\wedge \Delta[1]_+$ is a cylinder object for $X$ in $\hShpg$. For the maps $X\to X\wedge \Delta[1]_+$ are trivial cofibrations in $\hShpg$, and the map $X \wedge \Delta[1]_+ \to X$ is a weak equivalence in $\hShpg$. Hence if $X$ is a cofibrant profinite $G$-space and $Y$ is a fibrant pointed profinite $G$-space, there is a natural bijection 
$$\pi_0\map_{\hShpg}(X,Y) \cong \Hom_{\hHhpg}(X,Y).$$
\end{remark}
%
%
Since weak equivalences and fibrations in $\hShg$ are determined by their underlying maps in $\hSh$, one can hope that the fibrant replacement functor constructed for $\hSh$ above also yields a fibrant replacement in $\hShg$. In fact, this is the case. The point is that the loop groups and loop groupoids are free. If $G$ acts continuously on a profinite set $S$, then $G$ also acts  continuously on the free profinite group on this set. Hence the construction of the fibrant replacement functor in $\hShg$ is the same as for $\hSh$. We just use Lemma \ref{5.6.4} above to decompose a profinite $G$-space $X$ into a limit of simplicial finite $G$-sets. With a suitable notion of continuous group action on a groupoid we obtain a $G$-equivariant fibrant replacement in $\hShg$ (for more details see \cite{completion}, \S 4.4).\\
Since we will be working with pointed profinite $G$-spaces, we would also like to have a fibrant replacement in $\hShpg$. For a pointed profinite $G$-space $X\in \hShpg$, the basepoint $p:\ast \to X$ and the decomposition of $X$ as a limit $X\lim_{Q_G}X/Q_G$ as pointed simplicial finite $G$-sets provides each $X/Q_G$ with a choice of basepoint which is compatible in the system $\{X/Q_G\}$. Then \cite{completion}, Theorem 4.12, implies the following pointed version.
\begin{theorem}\label{Gfibrep}
Let $G$ be a profinite group and let $X$ be a pointed profinite $G$-space. The map 
$$\eta: X \to \bar{W}\hat{\Gamma}X=\lim_{Q_G \in \Rh_G(X)} \bar{W}\hat{\Gamma}(X/Q_G)$$ 
is a weak equivalence in $\hShpg$. The pointed profinite $G$-space $\bar{W}\hat{\Gamma}X$ is fibrant in $\hShpg$. Hence $\eta$ defines a functorial fibrant replacement in $\hShpg$. Moreover, $X$ is weakly equivalent in $\hShpg$ to a limit of simplicial finite $G$-sets which are fibrant in $\hShpg$
$$X \simeq \lim_{Q_G,U} \bar{W}(\Gamma(X/Q_G))/U.$$ 
\end{theorem}
\subsection{Homotopy groups of pointed profinite $G$-spaces}
\begin{defn}\label{Gdefhomotopygroups}
Let $X$ be a pointed profinite $G$-space and let $R_GX$ be a fibrant replacement of $X$ in the model structure on $\hShpg$. Then we define the $n$th profinite homotopy group of $X$ for $n \geq 2$ to be the group
$$\pi_n(X):=\pi_0(\Omega^n(R_GX)).$$
\end{defn}
\begin{remark}
Let $X$ be a pointed profinite $G$-space. Since weak equivalences and fibrations in $\hShpg$ are determined by the underlying maps in $\hShp$, the underlying homotopy groups of $X$ of Definition \ref{Gdefhomotopygroups} coincide with the homotopy groups of Definition \ref{defhomotopygroups} of $X$ after forgetting the $G$-action on $X$. 
Moreover, the homotopy groups of Definition \ref{Gdefhomotopygroups} coincide with the homotopy groups of the underlying fibrant pointed simplicial set $|R_GX|$.  
\end{remark}
\begin{prop}\label{Gmodulepi}
Let $X$ be a pointed profinite $G$-space. The set of connected components of $X$ is a profinite $G$-set. The profinite fundamental group of $X$ is a profinite $G$-group. The $n$th homotopy group of $X$ is a profinite $G$-module for $n\geq 2$.  
\end{prop}
\begin{proof}
Let $R_G$ be the functorial fibrant replacement functor of Theorem \ref{Gfibrep}. 
Since $R_GX$ is the limit $\lim_{\beta}(R_GX)_{\beta}$ of simplicial finite $G$-sets which are fibrant in $\hShpg$, we obtain that
$$\hompg(S^1, R_GX)=\hompg(S^1, \lim_{\beta} (R_GX)_{\beta}) = \lim_{\beta} \hompg(S^1, (R_GX)_{\beta})$$
is a cofiltered limit of fibrant simplicial finite $G$-sets. Moreover, the underlying profinite space of $\hompg(S^1, R_GX)$ is equal to the fibrant profinite space $\homp(S^1, R_GX)$. Hence in order to prove the assertion, it suffices by Proposition \ref{pi0} to show the assertions for a simplicial finite $G$-set which is fibrant in $\hShpg$. \\
So we can assume that $X$ is a simplicial finite $G$-set which is fibrant in $\hShpg$. The set $X_0$ of $0$-simplices and the set $X_1$ of $1$-simplices of $X$ are finite discrete $G$-sets. Hence they can be written as colimits of the fixed points under the open subgroups of $G$, i.e. 
$$X_0 = \colim_U X_0^U, ~ X_1 = \colim_U X_1^U$$
where $U$ runs through the open subgroups of $G$. The set $\pi_0X$ of connected components is defined as the colimit of the diagram 
$$\xymatrix{
X_1 \ar[d]_{d_0} \ar[r]^{d_1} & X_0 \ar[d] \\
X_0 \ar[r] & \pi_0X.}$$
Since $\pi_0$ commutes with filtered colimits, we also get $\pi_0X=\colim_U \pi_0(X^U)$. Since $X$ is a fibrant simplicial finite set and since the fixed points can be viewed as a cofiltered limit, Proposition \ref{pi0} shows that we have $\pi_0(X^U)\cong (\pi_0X)^U$. Hence we obtain 
$$\pi_0X=\colim_U (\pi_0X)^U,$$
i.e. that $G$ acts continuously on the finite discrete set $\pi_0X$. Since $\hompg(S^1, X)$ is a fibrant simplicial finite $G$-set if $X$ is a fibrant simplicial finite $G$-set, the assertions on the fundamental group and the higher homotopy groups follow from the result on $\pi_0$. 
\end{proof}
\subsection{Homotopy limits of pointed profinite $G$-spaces}
Homotopy limits of diagrams of pointed profinite $G$-spaces can be constructed in the same way as for pointed profinite spaces. Let $I$ be a small category and let $X(-):I\to \hShpg$ be a functor from $I$ to pointed profinite $G$-spaces. 
We define the pointed profinite $G$-space $\hompg(B(I/-),X(-))$ as the equalizer in $\hShpg$ of the diagram of pointed profinite $G$-spaces
$$\prod_{i\in I} \hompg(B(I/i),X(i)) \rightrightarrows \prod_{\alpha:i\to i' \in I} \hompg(B(I/i),X(i')).$$
%
\begin{defn}\label{Gdefholim}
Let $I$ be small category and $X(-): I \to \hShpg$ be a functor. The homotopy limit of $X(-)$ is defined to be the pointed profinite $G$-space 
$$\holim_{i\in I} X(i):=\hompg(B(I/-),X(-)).$$ 
\end{defn}
%
\begin{lemma}\label{Gholiminv}
Let $X(-)$ be a functor from a small category $I$ to pointed profinite $G$-spaces. If $X(i)$ is fibrant for every $i\in I$, then $\holim_{i\in I}X(i)$ is fibrant in $\hShpg$.\\
Let $f(-):X(-) \to Y(-)$ be a natural transformation of functors from the small category $I$ to the full subcategory of fibrant pointed profinite $G$-spaces. Let 
$$f:=\holim_{i\in I}f(i): \holim_{i\in I}X(i) \to \holim_{i\in I}Y(i)$$
be the induced map on homotopy limits. If $f(i): X(i) \to Y(i)$ is a weak equivalence in $\hShpg$ for every $i\in I$, then $f$ is a weak equivalence in $\hShpg$.
\end{lemma}
\begin{proof}
As for pointed profinite spaces, this follows from  \cite{hirsch}, Theorems 19.4.1 and Theorem 19.4.2. 
\end{proof}
%
%
%
\begin{lemma}\label{Gholimlemma}
Let $X(-): I \to \hShpg$ be a small cofiltering diagram such that each $X(i)$ is connected and fibrant in $\hShpg$. Then the natural isomorphism of homotopy groups of underlying simplicial sets
$$\pi_q(|\holim_{i\in I} X(i)|) \cong \lim_{i\in I} \pi_q(|X(i)|)$$
of Lemma \ref{holimlemma} is an isomorphism of profinite $G$-groups for $q\geq 0$.  
\end{lemma}
\begin{proof}
The construction of the spectral sequence computing the homotopy groups of the homotopy limit is functorial. Hence the isomorphism of Lemma \ref{holimlemma} for the underlying diagram of pointed profinite spaces is in fact a $G$-equivariant isomorphism of profinite $G$-groups. 
\end{proof}

\subsection{A concrete equivariant profinite replacement}  
In this section we discuss a functor that sends certain $G$-spaces to profinite $G$-spaces. We call a simplicial set $X$ a $G$-space, if $X$ is a simplicial object in the category of $G$-sets for the abstract underlying group of $G$. In order to simplify the argument and in order to make the construction more concrete we will make two assumptions, on the one hand that the profinite group $G$ is strongly complete and on the other hand that the spaces have finite homotopy groups. \\
A profinite group $G$ is called strongly complete in \cite{ribes}, if every subgroup of finite index is  open in $G$, or, in other words, if $G= \hat{G}$ as profinite groups. The profinite completion of an abstract group is itself strongly complete. But in general there are subgroups of finite index which are not open in the given topology. The significance of this property for us, is that for a strongly complete profinite group $G$, every finite set $S$ with a $G$-action is a continuous  discrete $G$-set. \\
Serre conjectured that every topologically finitely generated profinite group $G$ is strongly complete, where topologically finitely generated means that $G$ contains a dense subgroup which is finitely generated as a group. He showed this conjecture for finitely generated pro-$p$-groups. Recently, Nikolov and Segal proved the full conjecture for every finitely generated profinite group in \cite{niksegal}. The proof relies on the classification of finite simple groups. The most important examples for us are provided by the extended Morava stabilizer group $G_n$ and all of its closed subgroups. For $G_n$ is a finitely generated profinite group and hence strongly complete. \\
The following lemma is an equivariant version of Lemma \ref{finitereplacement} which is of fundamental importance for this paper. 
\begin{lemma}\label{Gfinitereplacement}
Let $G$ be a strongly complete profinite group and $X$ be a connected simple pointed $G$-space whose homotopy groups are all finite. Then there is a simplicial finite $G$-set $F_GX$ which is a fibrant object in $\hShpg$ and a
natural $G$-equivariant pointed map $\varphi:X \to F_GX$ which is a weak equivalence of underlying simplicial sets. In particular, it induces an isomorphism $\pi_*X \cong \pi_*F_GX$ of homotopy groups of the underlying simplicial sets.  \\
The assignment $X\mapsto F_GX$ is functorial in the sense that given a map $h:X\to Y$ between connected simple pointed simplicial $G$-sets with finite homotopy groups, there is a map $F(h)$ in $\hShpg$ such that the following diagram of underlying simplicial $G$-sets commutes
$$\xymatrix{
X\ar[d] \ar[r]^h & Y \ar[d] \\
F_GX \ar[r]_{F(h)} & F_GY.}$$
\end{lemma}
\begin{proof}
Let $X\to X'$ be a functorial fibrant replacement in $\Shp$. The map $X\to X'$ is $G$-equivariant and a weak equivalence of underlying simplicial sets. Hence we can assume that the underlying simplicial set of $X$ is a fibrant object in $\Shp$. \\ 
Let $\pi_n:=\pi_nX$ be the $n$th finite homotopy group of $X$. Since $X$ is a $G$-space, each $\pi_n$ is a $G$-module. Moreover, since $G$ is strongly complete, $G$ acts continuously on each finite discrete group $\pi_n$. We define the profinite $G$-space $F_GX$ again as the limit in $\hShpg$ of a specific Postnikov tower $\{X(n)\}_{n\geq 1}$ of $X$. Let $\cosk_nX\in\Shp$ be the $n$th coskeleton of $X$. Since $\cosk_n$ is a functor, the space $\cosk_nX$ is a simplicial $G$-set and the associated natural pointed maps $X\to \cosk_nX$ and $\cosk_{n}X\to \cosk_{n-1}X$ are $G$-equivariant for each $n\geq 0$.  
Moreover, the map $\cosk_nX \to \cosk_{n-1}X$ sits in a homotopy pullback square of $G$-equivariant maps
\begin{equation}\label{Gcosknpb}
\xymatrix{
\cosk_nX \ar[d] \ar[r] &  WK(\pi_n,n) \ar[d]^{q_n} \\
\cosk_{n-1}X \ar[r]_{k^G_n} & K(\pi_n,n+1).}
\end{equation} 
The map $k^G_n$ is the $G$-equivariant $k$-invariant defined by a class 
$$[k_n^G]\in H^{n+1}_G(\cosk_{n-1}X; \pi_n)$$
in the $G$-equivariant cohomology of $\cosk_{n-1}X$ (see e.g. \cite{goerss}, p. 207-208). \\
Now we define profinite $G$-spaces $X(n)$ together with $G$-equivariant pointed maps $j_n:\cosk_nX \to X(n)$ which are weak equivalences of underlying pointed simplicial sets. For $n=1$, we define $X(1):=B\pi_1X$. It is a pointed simplicial finite $G$-set and a fibrant object in $\hShpg$. Choosing any $G$-equivariant map $\cosk_1X \to B\pi_1X$ which is a weak equivalence of simplicial sets provides a map $j_1: \cosk_1X \to X(1)$. (This $G$-equivariant choice is possible, since the category of simplicial $G$-sets, for the underlying abstract group of $G$, can be equipped with a model structure in which weak equivalences and fibrations are determined by their underlying non-equivariant maps.)\\ 
For $n\geq 2$, assume we are given $X(n-1)$ and a map $j_{n-1}:\cosk_{n-1} \to X(n-1)$. Up to $G$-equivariant homotopy there is a factorization in the category of pointed simplicial $G$-sets
$$\xymatrix{
\cosk_{n-1}X \ar[d]^{k_n^G} \ar[r]^{j_{n-1}} & X(n-1) \ar[dl]\\
K(\pi_n,n+1). & }
$$
The space $X(n)$ and the map $X(n) \to X(n-1)$ is then defined as the pullback of the diagram of simplicial finite $G$-sets
\begin{equation}\label{GXnpb}
\xymatrix{
X(n) \ar[d] \ar[r] &  WK(\pi_n,n) \ar[d]^{q_n} \\
X(n-1) \ar[r]_{k_n} & K(\pi_n,n+1).}
\end{equation} 
Since $\pi_n$ is a finite $G$-module, the spaces $K(\pi_n,n+1)$ and $WK(\pi_n,n)$ are simplicial finite $G$-sets. Moreover, since $G$ is strongly complete, the action of $G$ on all the finite sets involved is continuous and the map  
$$q_n: WK(\pi_n,n) \to K(\pi_n,n+1)$$ 
is a fibration in $\hShpg$ by Theorem \ref{Gmodelunstable}, and \cite{completion} Proposition 3.7. 
Hence the pullback of (\ref{GXnpb}) can be constructed in $\hShpg$ and $X(n)$ is a simplicial finite discrete $G$-set which is a fibrant object in $\hShpg$.  
Since the map 
$$\cosk_nX \to WK(\pi_n,n) \times_{K(\pi_n,n+1)} \cosk_{n-1}X$$ 
is a weak equivalence, we obtain an induced weak equivalence $j_n: \cosk_nX \to X(n)$ of underlying simplicial sets. 
Now we can define $F_GX$ to be the limit
$$F_GX:=\lim_n X(n).$$
Since the finite discrete $G$-set of $m$-simplices of $X(n)$ is isomorphic to the finite discrete $G$-set of $m$-simplices of $X(n-1)$ for $m\leq n-1$, $F_GX$ is a simplicial object of finite discrete $G$-sets. Moreover, $F_GX$ is a fibrant object in $\hShpg$, since it is the limit of a tower of fibrations in $\hShpg$. Furthermore, since the $G$-equivariant maps $X\to \lim_n \cosk_nX$ and $\lim_n \cosk_nX \to \lim_n X(n)$ are weak equivalences of underlying simplicial sets, the associated map $\varphi:X \to F_GX$, which is functorial and hence $G$-equivariant, is a weak equivalence of underlying simplicial sets. In particular, it induces an isomorphism $\pi_*X \cong \pi_*F_GX$. \\
The functoriality follows as in the non-equivariant case from the fact that all constructions used to define $F_GX$ are functorial.
\end{proof}
As in the non-equivariant case, we have to know how $F_G$ behaves with respect to taking loop spaces. 
\begin{lemma}\label{GFloop} 
Let $G$ be a strongly complete profinite group. \\
{\rm (a)} Let $X$ and $Y$ be connected simple pointed simplicial $G$-sets with finite homotopy groups and let $f:X\to Y$ be a $G$-equivariant map that is a weak equivalence of underlying pointed simplicial sets. Then the induced map $F_G(f)$ is a weak equivalence in $\hShpg$.\\
{\rm (b)} Let $W$ be a connected simple pointed simplicial $G$-set with finite homotopy groups. Then there is a natural weak equivalence $F_G(\Omega X)\stackrel{\sim}{\to} \Omega(F_GX)$ in $\hShpg$.\\
{\rm (c)} If $V\to \Omega W$ is a $G$-equivariant map between connected simple pointed simplicial $G$-sets with finite homotopy groups which is a weak equivalence of underlying simplicial sets, then the composite map $F_GV\to F_G(\Omega W)\to \Omega(F_GW)$ is a weak equivalence in $\hShpg$.
\end{lemma}
\begin{proof}
Since weak equivalences in $\hShpg$ are determined by their underlying maps in $\hShp$, the lemma follows from the corresponding results in Lemma \ref{Floop} and the construction of $F_G$.
\end{proof}

%
\section{Profinite $G$-spectra}
\subsection{Profinite spectra}
Now we want to stabilize the model structure of profinite spaces. Since the simplicial circle $S^1=\Delta^1/\partial \Delta^1$ is a simplicial finite set and hence an object in $\hShp$, we may stabilize $\hShp$ by considering sequences of pointed profinite spaces together with connecting maps for the suspension. 
\begin{defn} 
A profinite spectrum $X$ consists of a sequence of pointed profinite spaces $X_n \in \hShp$ and maps $\sigma_n:S^1 \wedge X_n \to X_{n+1}$ in $\hShp$ for $n\geq0$. A morphism $f:X \to Y$ of profinite spectra consists of maps $f_n:X_n \to Y_n$ in $\hShp$ for $n\geq0$ such that $\sigma_n(1\wedge f_n)=f_{n+1}\sigma_n$. We denote by $\hSp$ the corresponding category of profinite spectra.
\end{defn}
As for profinite spaces, the word ``profinite'' in the term ``profinite spectrum'' does not mean that we look at inverse systems of finite spectra in the usual sense. Instead we look at spectra that are built out of simplicial profinite sets. \\ 

The category of profinite spectra is a simplicial category. Let $X$ and $Y$ be profinite spectra. The mapping space $\map_{\hSp}(X,Y)$ is defined as the simplicial set whose set of $n$-simplices is given as the set of maps 
$$\map_{\hSp}(X,Y)_n=\Hom_{\hSp}(X\wedge \Delta[n]_+, Y)$$
where $X\wedge\Delta[n]_+$ is the levelwise smash product of $X$ with the pointed simplicial finite set $\Delta[1]_+$. This defines a functor 
$$\map_{\hSp}(-,-):\hSp^{\mathrm{op}}\times \hSp \to \Sh.$$
Let $K$ be a simplicial set, and $X$ a profinite spectrum. The tensor object $X\otimes K\in \hSp$ is defined as the spectrum whose $n$th pointed profinite space is $X_n\wedge K_+$. The function object in $\hSp$ is defined as the profinite spectrum $\hom_{\hSp}(K,X)\in \hSp$ whose $n$th pointed profinite space is given by 
$$(\hom_{\hSp}(K,X))_n = \hom_{\hShp}(K, X_n).$$ 
Recall that the structure $S^1\wedge X_n \to X_{n+1}$ is determined by its adjoint 
$$X_n \to \Omega X_{n+1}.$$ 
Hence the structure map of $\hom_{\hSp}(K,X)$ is determined by the map
$$\hom_{\hShp}(K, X_n) \to \hom_{\hShp}(K, \Omega X_{n+1}) \cong \Omega (\homp(K, X_{n+1}))$$
given by adjunction (\ref{homhom}).\\
%
%
%
Let $X$ and $Y$ be profinite spectra and let $K$ be a simplicial set. We have the natural bijections
$$\map_{\hSp}(X\otimes K, Y) \cong \map_{\Sh}(K, \map_{\hSp}(X,Y))$$
and 
$$\map_{\hSp}(Y, \hom_{\hSp}(K, X)) \cong \map_{\Sh}(K, \map_{\hSp}(X,Y)).$$
%
If the simplicial set $K$ is already equipped with a base point and $X$ is a profinite spectrum, we also denote by $\hom_{\hSp}(K,X)\in \hSp$ the profinite spectrum whose $n$th space is $\hom_{\hShp}(K,X_n)$. \\
%
%
Observing that the functor $\hSp \to \hSp$, $X \mapsto S^1\wedge X$ is a left Quillen endofunctor, the stable model structure on $\hSp$ is constructed as in \cite{etalecob} in two steps as a dual version of the stabilization techniques in \cite{hovey}. First we need a projective model structure.
\begin{defn}\label{defstrict}
A map $f$ in $\hSp$ is a projective weak equivalence (projective fibration) if each map $f_n$ is a weak equivalence (fibration) in $\hShp$. A map $i$ is a projective cofibration if it has the left lifting property with respect to all projective trivial fibrations. 
\end{defn}
The following proposition is proved as in \cite{hovey}, Proposition 1.14.
\begin{prop}\label{projcofs}
A map  $i:A \to B$ in $\hSp$ is a projective (trivial) cofibration if and only if $i_0:A_0 \to B_0$ and the induced maps  $j_n:A_n \amalg_{S^1\wedge A_{n-1}} S^1\wedge B_{n-1}\to B_n$ for $n\geq 1$ are (trivial) cofibrations in $\hShp$. 
\end{prop}
\begin{prop}\label{thmstrict}
The projective weak equivalences, projective fibrations and projective cofibrations define a left proper fibrantly generated simplicial model structure on $\hSp$. 
\end{prop}
\begin{proof} 
That we obtain a left proper fibrantly generated model structure can be proven in essentially the same way as Theorem 1.13 in \cite{hovey}. In order to show the factorization axiom one uses a cosmall object argument and the fact that $\hShp$ is fibrantly generated. It remains to prove that this model structure is simplicial. We have defined mappings spaces and tensor and cotensor objects for $\hSp$ above. Let $i: A\to B$ be a cofibration of finite simplicial sets and $p: X\to Y$ a projective fibration in $\hSp$. We have to show that the induced map
$$(i^*, p_*): \hom_{\hSp}(B, X) \to \hom_{\hSp}(A, X) \times_{\hom_{\hSp}(A, Y)} \hom_{\hSp}(B, Y)$$
is a projective fibration in $\hSp$, which is trivial if either $i$ or $p$ is trivial. For $n\geq 0$, the $n$th map $(i^*, p_*)_n$ is given by the map of pointed profinite spaces
$$(i^*, p_{n*}): \hom_{\hShp}(B, X_n) \to \hom_{\hShp}(A, X_n) \times_{\hom_{\hShp}(A, Y_n)} \hom_{\hShp}(B, Y_n).$$
The model structure on $\hShp$ is simplicial. This implies that $(i^*, p_{n*})$ is a fibration, since $i$ is a cofibration and $p_n$ is a fibration. Moreover, $(i^*, p_{n*})$ is a weak equivalence if either $i$ or $p_n$ is a weak equivalence. Since projective weak equivalences and projective fibrations are determined levelwise, this shows that $(i^*, p_*)$ is a projective fibration which is a trivial projective fibration if either $i$ or $p$ is trivial.
\end{proof}

In the second step we enlarge the class of weak equivalences by localizing the projective model structure further. 
\begin{defn}
A profinite spectrum $E\in \hSp$ is called an $\Omega$-spectrum if each $E_n$ is fibrant in $\hShp$ and the adjoint structure maps 
$$E_n \to \Omega E_{n+1}=\hom_{\hShp}(S^1,E_{n+1})$$ 
are weak equivalences in $\hShp$ for all $n\geq 0$. \\
A map $f:X\to Y$ of profinite spectra is called \\
\begin{itemize}
\item a (stable) equivalence if it induces a weak equivalence of mapping spaces 
$$\map_{\hSp}(Y,E) \to \map_{\hSp}(X,E)$$ 
for all $\Omega$-spectra $E$; 
\item a (stable) cofibration if and only if it is a projective cofibration; 
\item a (stable) fibration if it has the right lifting property with respect to all maps that are stable equivalences and stable cofibrations.
\end{itemize} 
\end{defn}
The following result was shown in \cite{profinhom}, Theorem 2.36. The proof is based on the dual of the general stabilization machinery for model structures by Hovey \cite{hovey} and the localization for fibrantly generated simplicial model structures in \cite{etalecob}, Theorem 6 (see also \cite{etalecob}, Theorem 14).
\begin{theorem}\label{stable}
The classes of stable equivalences, stable fibrations and stable cofibrations define a stable simplicial model structure on profinite spectra. The fibrant objects in $\hSp$ are exactly the $\Omega$-spectra. 
We denote the corresponding homotopy category by $\hSHh$. In particular, the functor $S^1\wedge \cdot: \hSHh \to \hSHh$ is an equivalence of categories.
\end{theorem}
Let $\Sp$ be the stable model category of Bousfield-Friedlander spectra of \cite{bousfried}.
\begin{prop}\label{stablecompletion}
The levelwise profinite completion functor 
$$\compl:\Sp \to \hSp$$ 
preserves stable cofibrations and stable equivalences. The forgetful functor 
$$|\cdot|:\hSp \to \Sp$$ 
preserves fibrations and weak equivalences between fibrant objects.
In particular, $\compl$ induces a functor on the homotopy categories and the pair $(\compl, |\cdot|)$ is a Quillen pair of adjoint functors.  
\end{prop}
\begin{proof}
Let $i:A \to B$ be a cofibration in $\Sp$. Since $\compl:\Shp \to \hShp$ preserves cofibrations and pushouts as a left Quillen functor, the maps $\hat{i}_0$ and $\hat{j}_n$ are cofibrations in $\hShp$. Hence $\hat{i}$ is a cofibration in $\hSp$. \\
Let $Y$ be a pointed profinite space. As a right adjoint the forgetful functor for pointed profinite spaces $|\cdot|: \hShp \to \Shp$ commutes with cotensors. Hence we have  $$|\hom_{\hShp}(S^1, Y)| = \hom_{\Shp}(S^1, |Y|).$$
Moreover, $|\cdot|: \hShp \to \Shp$ sends fibrant pointed profinite spaces to fibrant pointed spaces and preserves weak equivalences between fibrant objects. Hence if $E$ is an $\Omega$-spectrum in $\hSp$, then $|E|$ is an $\Omega$-spectrum in $\Sp$. \\
Let $f: X\to Y$ be a stable equivalence in $\Sp$ and let $E$ be an $\Omega$-spectrum in $\hSp$. Since profinite completion is left adjoint to the forgetful functor we get a commutative diagram of simplicial sets
\begin{equation}\label{mapsquare}
\xymatrix{
\map_{\hSp}(\hat{Y}, E) \ar[r] \ar[d]_{\cong} & \map_{\hSp}(\hat{X},E) \ar[d]^{\cong} \\
\map_{\Sp}(Y, |E|) \ar[r] & \map_{\Sp}(X, |E|)}
\end{equation}
whose vertical maps are isomorphisms. Since the map $f$ is a stable equivalence in $\Sp$ it induces a weak equivalence of mapping spaces 
$$\map_{\Sp}(Y, F) \to \map_{\Sp}(X,F)$$ 
for every $\Omega$-spectrum $F$ in $\Sp$. (This follows for example from \cite{hovey}, Corollary 3.5.) Since $|E|$ is an $\Omega$-spectrum in $\Sp$, this shows that the lower horizontal map in (\ref{mapsquare}) is a weak equivalence. Since the vertical maps are isomorphisms, this implies that the upper horizontal map in (\ref{mapsquare}) is a weak equivalence as well. Since this holds for every $\Omega$-spectrum $E$ in $\hSp$, the map $\hat{f}$ is a stable equivalence in $\hSp$. This finishes the proof that profinite completion preserves stable cofibrations and stable equivalences. Since the forgetful functor is the right adjoint of profinite completion, this implies that $|\cdot|$ preserves fibrations and stable equivalences between fibrant objects. 
\end{proof}

\subsection{Homotopy groups of profinite spectra}
For a positive integer $k\geq 0$, there is an evaluation functor $\Ev_k:\hSp \to \hShp$, $X \mapsto X_k$, sending a profinite spectrum $X$ to its $k$th pointed profinite space $X_k$. It is a right Quillen functor for the stable model structure on $\hSp$. Its left Quillen adjoint is the functor $\Sigma^k(-): \hShp \to \Sp$ defined by $(\Sigma^kY)_m = S^{m-k}Y$ if $m\geq k$ and $(\Sigma^kY)_m = *$ otherwise, where $S^{m-k}Y$ denotes the pointed profinite space obtained by smashing the pointed profinite space $Y$ $(m-k)$-times with $S^1$. \\
Let $S^0$ be the discrete pointed simplicial finite set generated by two points as $0$-simplices one of which is the basepoint. 
We define the sphere spectrum $\Sph^0$ to be the profinite spectrum $\Sigma^0(S^0)$ whose $m$th pointed profinite space is $S^m$, i.e. $S^1$ smashed with itself $m$-times. \\
For a negative integer $n< 0$, we define the $n$th suspension $\Sph^n$ of the sphere spectrum to be the profinite spectrum $\Sigma^{-n}(S^0)$. Its $m$th space is $S^{m+n}$ if $m\geq -n$ and just a point otherwise.  \\
For a positive integer $n>0$, we define the $n$th suspension $\Sph^n$ of the sphere spectrum to be the profinite spectrum $\Sigma^0(S^n)$ whose $m$th space is $S^{m+n}$ for every $m\geq 0$. \\
Via the model structure of Theorem \ref{stable} we can define homotopy groups of a profinite spectrum $X$. Let $RX$ denote a functorial fibrant replacement of $X$ in the stable model structure on $\hSp$ and, for an integer $n$, let $\Sph^n$ be the $n$th suspension of the sphere spectrum. We define $\pi_nX$ to be the group 
$$\pi_nX:=\Hom_{\hSHh}(\Sph^n, RX).$$ 
\begin{prop}\label{stablepis}
{\rm (a)} Let $X$ be a fibrant profinite spectrum, i.e. an $\Omega$-spectrum in $\hSp$, and $n\in \Z$. Let $k\geq 0$ be any natural number such that $n+k\geq0$. Then there is a natural isomorphism of groups
$$\pi_nX\cong \pi_{n+k}X_{k}$$
where $\pi_{n+k}X_{k}$ denotes the profinite homotopy group of the $k$th pointed profinite space of $X$. \\
{\rm (b)} The homotopy groups of a fibrant profinite spectrum $X$ are isomorphic to the homotopy groups of the underlying $\Omega$-spectrum $|X|$ in $\Sp$.\\
{\rm (c)} The homotopy groups of every spectrum are abelian profinite groups. 
\end{prop}
\begin{proof}
(a) If $n\geq 0$, then the Quillen adjoint pair $(\Sigma^0, \Ev_0)$ yields a natural isomorphism 
$$\pi_nX=\Hom_{\hSHh}(\Sph^n, X) = \Hom_{\hSHh}(\Sigma^0(S^n), X) \cong \Hom_{\hHhp}(S^n, X_0).$$
The right hand side is naturally isomorphic to $\pi_0(\Omega^n(X_0))=\pi_n(X_0)$. Since the maps $X_0 \to \Omega^k(X_k)$ are weak equivalences of fibrant objects in $\hShp$ for every $k\geq 0$, this proves the assertion for $n\geq 0$. \\
For $n< 0$, the Quillen adjoint pair $(\Sigma^{-n}, \Ev_{-n})$ yields a natural isomorphism 
$$\pi_nX=\Hom_{\hSHh}(\Sph^n, X) = \Hom_{\hSHh}(\Sigma^{-n}(S^0), X) \cong \Hom_{\hHhp}(S^0, X_{-n}).$$
The right hand side is naturally isomorphic to $\pi_0(X_{-n})$. Reindexing and using the fact that $X$ is an $\Omega$-spectrum gives $\pi_nX=\pi_{n+k}(X_{k})$ for any $k\geq 0$ such that $n+k\geq 0$. This finishes the proof of (a). \\
(b) Since the underlying spectrum $|X|$ is an $\Omega$-spectrum in $\Sp$, the $n$th homotopy group of $|X|$ is given by the abelian group $\pi_{n+k}(|X_k|)$ for any $k\geq 0$ such that $n+k\geq0$. By Lemma \ref{underlyingpi}, there is a natural isomorphism 
$$\pi_{n+k}(|X_k|) \cong \pi_{n+k}(X_k).$$
The assertion now follows from (a).\\
(c) This follows from the fact that the homotopy groups of a profinite space are profinite groups and that $\Omega^2Y$ is an abelian group object for any pointed profinite space $Y$. 
\end{proof}

We can also define generalized cohomology groups with coefficients in profinite spectra. Let $X$ be a fibrant profinite spectrum and let $Z$ be a spectrum in $\Sp$. The spectrum $Z$ is isomorphic to a colimit of suspended sphere spectra $\Sph^{n_{\alpha}}$ for integers $n_{\alpha}\in \Z$ indexed by a filtered category. We define the $k$th generalized cohomology group of $Z$ with coefficients in $X$ to be the limit
\begin{equation}\label{defgencoh}
X^kZ:=\lim_{\alpha} \Hom_{\hSHh}(\Sph^{n_{\alpha}}, X[k])
\end{equation}
where $X[k]$ denotes the $\Omega$-spectrum in $\hSp$ whose $m$th space is $X_{m-k}$ for $m\geq k$ and a point otherwise. Since each abelian group $\Hom_{\hSHh}(\Sph^{n_{\alpha}}, X[k])=\pi_{n_{\alpha}}(X[k])$ carries a profinite structure by Lemma \ref{stablepis}, we deduce that the group $X^kZ$ is also a profinite abelian group.\\
Moreover, since each space in the sphere spectrum $\Sph^n$ is a pointed simplicial finite set for any integer $n$, the Quillen adjunction of Proposition \ref{stablecompletion} between profinite completion and the forgetful functor yields a natural isomorphism of abelian groups
$$\Hom_{\hSHh}(\Sph^{n}, X)\cong \Hom_{\SHh}(\Sph^{n}, |X|).$$
Hence the $k$th generalized cohomology group of $Z$ with coefficients in the profinite spectrum $X$ coincides with the $k$th generalized cohomology group of $Z$ with coefficients in the underlying $\Omega$-spectrum $|X|$ in $\Sp$, i.e. there is a natural isomorphism 
\begin{equation}\label{compgen}
X^kZ = \lim_{\alpha} \Hom_{\hSHh}(\Sph^{n_{\alpha}}, X[k]) \cong \lim_{\alpha} \Hom_{\SHh}(\Sph^{n_{\alpha}}, |X[k]|) \cong |X|^kZ.
\end{equation}
%
%
%
\subsection{Homotopy limits of profinite spectra}
Let $I$ be a small category and let $X(-)$ be a functor from $I$ to the full subcategory of $\Omega$-spectra in $\hSp$. For each $n\geq0$ and each $i\in I$, we have a fibrant pointed profinite space $X_n(i):=X(i)_n$. So for every $n\geq 0$, this defines an $I$-diagram $X_n(-)$ of fibrant pointed profinite spaces. The homotopy limit $\holim_{i\in I} X_n(i)$ in $\hShp$ is again a fibrant pointed profinite space by Lemma \ref{holiminv}. Since the homotopy limit is defined using cotensors, there is a natural isomorphism
$$\holim_{i\in I}\Omega (X_n(i)) \cong \Omega (\holim_{i\in I} X_n(i)).$$
Since each $X(i)$ is an $\Omega$-spectrum in $\hSp$ and since $\holim_{i\in I}$ preserves weak equivalences between fibrant objects by Lemma \ref{holiminv}, for each $n\geq 0$ we obtain a weak equivalence in $\hShp$
$$\holim_{i\in I} X_n(i) \stackrel{\sim}{\to} \holim_{i\in I} \Omega X_n(i) \cong \Omega \holim_{i\in I} X_n(i).$$
Hence together with these structure maps the sequence of fibrant pointed profinite spaces $\holim_{i\in I} X_n(i)$ defines an $\Omega$-spectrum in $\hSp$ that we denote by $\holim_{i\in I} X(i)$ and call the homotopy limit of the diagram $X(-)$ (see \cite{thomason}, \S 5, for the analogous story for $\Sp$). 
\begin{lemma}\label{stableholiminv}
Let $X(-) \to Y(-)$ be a natural transformation of functors from a small category $I$ to the full subcategory of $\Omega$-spectra in $\hSp$. The induced map 
$$\holim_{i\in I}X(i) \to \holim_{i\in I}Y(i)$$ 
is an equivalence of $\Omega$-spectra in $\hSp$. 
\end{lemma}
\begin{proof}
We already know that $\holim$ sends a small diagram of $\Omega$-spectra in $\hSp$ to an $\Omega$-spectrum in $\hSp$. Since $\holim$ is constructed termwise and since stable equivalences between $\Omega$-spectra are exactly the projective equivalences in $\hSp$, the assertion follows from the corresponding result Lemma \ref{holiminv} for $\hShp$.
\end{proof}

\begin{lemma}\label{stableholimlemma}
Let $X(-): I \to \hSp$ be a small cofiltering diagram of $\Omega$-spectra in $\hSp$. There is a natural isomorphism of homotopy groups of underlying spectra for every $q\in \Z$
$$\pi_q(|\holim_{i\in I} X(i)|) \cong \lim_{i\in I} \pi_q(|X(i)|).$$
\end{lemma}
\begin{proof}
Since the profinite spectrum $\holim_{i\in I} X(i)$ is constructed levelwise and is an $\Omega$-spectrum in $\hSp$, the assertion follows from Proposition \ref{stablepis} and Lemma \ref{holimlemma}. 
\end{proof}
%
%
\subsection{Spectra with finite homotopy groups}
The functors given by taking profinite completion of spectra and taking profinite completion of abelian groups are related to each other in a similar way as the corresponding profinite completion functor for spaces. For the purpose of this paper, the following concrete result will be sufficient.
\begin{theorem}\label{stablefinitecompletion}
Let $X\in \Sp$ be a spectrum whose homotopy groups are all finite groups. 
Then there is a natural map 
$$X \to F^sX$$ 
of spectra from $X$ to a profinite spectrum $F^sX$ built of simplicial finite sets such that $F^sX$ is fibrant in $\hSp$ and $X\to F^sX$ is a stable equivalence of underlying spectra. In particular, it  induces an isomorphism $\pi_*X \cong \pi_*|F^sX|$ of homotopy groups of underlying spectra. \\
The assignment $X\mapsto F^sX$ is functorial in the sense that given a map $g:X\to Y$ between spectra with finite homotopy groups, there is a map $F^s(g)$ in $\hSp$ such that the following diagram of underlying spectra commutes
$$\xymatrix{
X\ar[d] \ar[r]^g & Y \ar[d] \\
F^sX \ar[r]_{F^s(g)} & F^sY.}$$
\end{theorem}
\begin{proof}
Let $X$ be a spectrum whose homotopy groups are finite. We can assume that $X$ is an $\Omega$-spectrum in $\Sp$, i.e. that each pointed space $X_n$ is fibrant in $\Shp$ and $X_n \to \Omega(X_{n+1})=\hom_{\Shp}(S^1, X_{n+1})$ is a weak equivalence for all $n\geq 0$. This implies for all $k\geq 0$
$$\pi_k(X_n) \cong \pi_{k}(\Omega(X_{n+1})) \cong \pi_{k+1}(X_{n+1}).$$
The $i$th stable homotopy group of the spectrum $X$ is given as any of the isomorphic homotopy groups $\pi_{i+n}(X_n)$ for positive $n$ such that $i+n \geq0$. Since the homotopy groups of $X$ are finite, each of the groups $\pi_k(X_n)$ is a finite group for all $k, n \geq 0$. \\
Since $X$ is a spectrum, we can assume that each $X_n$ is a connected simple pointed space.
Hence we can apply the finite replacement $F$ of Lemma \ref{finitereplacement} to each space $X_n$ and obtain the profinite spectrum $F^sX$. By Lemma \ref{Floop}, the induced maps 
$$FX_n \to \Omega(FX_{n+1})$$
are weak equivalences in $\hShp$. Since each $FX_n$ is fibrant in $\hShp$, this implies that the levelwise fibrant profinite spectrum $F^sX$ is already an $\Omega$-spectrum in $\hSp$. 
Furthermore, the $i$th stable homotopy group of the profinite spectrum $F^sX$ is given by any of the isomorphic finite groups 
$$\pi_{i+n}(FX_n)\cong \pi_{i+n}(X_n)$$ 
for positive $n$ such that $i+n \geq0$. This shows that the map $X\to F^sX$ is a weak equivalence of underlying spectra and induces an isomorphism $\pi_*X \cong \pi_*F^sX$. Functoriality follows from the functoriality of $F$ in Lemma \ref{finitereplacement}.
\end{proof}
%
%
\subsection{Profinite $G$-spectra}
Now let $G$ be a profinite group. We consider $S^1$ as a simplicial finite set with trivial $G$-action.
\begin{defn}\label{Gspectra}
A profinite $G$-spectrum $X$ is a sequence of pointed profinite $G$-spaces $\{X_n\}$ together with maps $S^1\wedge X_n \to X_{n+1}$ of pointed profinite $G$-spaces for each $n\geq 0$. A map of profinite $G$-spectra $X \to Y$ is a collection of maps $X_n \to Y_n$ in $\hShpg$ compatible with the structure maps of $X$ and $Y$. We denote the category of profinite $G$-spectra by $\hSpg$.
\end{defn}
In \cite{gspaces} a slightly different notion of profinite $G$-spectra was introduced by using a cofibrant replacement of $S^1$ in $\hShpg$. This notion turns out to be less appropriate for our purposes. Therefore, we consider Definition \ref{Gspectra} as the correct notion of profinite $G$-spectra. \\
The category of profinite $G$-spectra is a simplicial category. Let $X$ and $Y$ be profinite $G$-spectra. The mapping space $\map_{\hSpg}(X,Y)$ is defined as the simplicial set whose set of $n$-simplices is given as the set of maps 
$$\map_{\hSpg}(X,Y)_n=\Hom_{\hSpg}(X\wedge \Delta[n]_+, Y)$$
where $\Delta[n]_+$ is considered as a simplicial finite $G$-set with trivial $G$-action. This defines a functor 
$$\map_{\hSpg}(-,-):\hSpg^{\mathrm{op}}\times \hSpg \to \Sh.$$
Let $K$ be a simplicial set and $X$ a profinite $G$-spectrum. The tensor object $X\otimes K\in \hSpg$ is defined as the profinite $G$-spectrum whose $n$th pointed profinite $G$-space $X_n\wedge K_+$ where we consider $K_+$ as a pointed simplicial set with trivial $G$-action. The function object in $\hSpg$ is defined as the profinite spectrum $\hom_{\hSpg}(K,X)\in \hSpg$ whose $n$th pointed profinite $G$-space is given by 
$$(\hom_{\hSpg}(K,X))_n = \hom_{\hShpg}(K, X_n).$$ 
The structure map of $\hom_{\hSpg}(K,X)$ is determined by the $G$-equivariant map
$$\hom_{\hShpg}(K, X_n) \to \hom_{\hShpg}(K, \Omega X_{n+1}) \cong \Omega (\hom_{\hShpg}(K, X_{n+1})).$$
%
%
%
Let $X$ and $Y$ be profinite $G$-spectra and let $K$ be a simplicial set. We have natural bijections
$$\map_{\hSpg}(X\otimes K, Y) \cong \map_{\Sh}(K, \map_{\hSpg}(X,Y))$$
and 
$$\map_{\hSpg}(Y, \hom_{\hSpg}(K, X)) \cong \map_{\Sh}(K, \map_{\hSpg}(X,Y)).$$
%
If the simplicial set $K$ is already equipped with a base point and $X$ is a profinite $G$-spectrum, we also denote by $\hom_{\hSpg}(K,X)\in \hSp$ the profinite $G$-spectrum whose $n$th space is $\hom_{\hShpg}(K,X_n)$. 
%
%
As for profinite spectra we would like to construct a stable model structure on $\hSpg$. Therefore, we have to check the following lemma. 
\begin{lemma}\label{endofunctor}
The functor $X\mapsto S^1\wedge X$ is a left Quillen endofunctor of $\hShpg$. 
\end{lemma}
\begin{proof}
We saw in Example \ref{exGomega} that smashing with $S^1$ is a left adjoint functor whose right adjoint is given by the functor $\hShpg \to \hShpg$, $X \to \Omega X$. 
Since weak equivalences in $\hShpg$ are determined by the underlying maps in $\hShp$, it follows that the functor $X\mapsto S^1 \wedge X$ from $\hShpg$ to itself preserves weak equivalences. In order to see that it preserves cofibrations we recall that a map $f:X\to Y$ in $\hShpg$ is a cofibration if and only if $f$ is a levelwise injection and the action of $G$ on $Y_n -f(X_n)$ is free for each $n \geq 0$. We know that the smash product preserves levelwise injections. Now $G$ acts diagonally on the smash product $S^1\wedge Y$. This implies that if the $G$-action is free on $Y_n -f(X_n)$, then the $G$-action on $(S^1\wedge Y)_n - (S^1\wedge f)((S^1 \wedge X)_n)$ is free as well. Hence $X\mapsto S^1 \wedge X$ is a left adjoint endofunctor that preserves weak equivalences and cofibrations in $\hShpg$. 
\end{proof}

As for profinite spectra, we start with a projective model structure.
\begin{defn}\label{Gdefstrict}
A map $f$ in $\hSpg$ is a projective weak equivalence (projective fibration) if each map $f_n$ is a weak equivalence (fibration) in $\hShpg$. A map $i$ is a projective cofibration if it has the left lifting property with respect to all projective trivial fibrations. 
\end{defn}
The following proposition follows again as in \cite{hovey}, Proposition 1.14.
\begin{prop}\label{Gprojcofs}
A map  $i:A \to B$ in $\hSpg$ is a projective (trivial) cofibration if and only if $i_0:A_0 \to B_0$ and the induced maps  $j_n:A_n \amalg_{S^1\wedge A_{n-1}} S^1\wedge B_{n-1}\to B_n$ for $n\geq 1$ are (trivial) cofibrations in $\hShpg$. 
\end{prop}
\begin{prop}\label{Gthmstrict}
The projective weak equivalences, projective fibrations and projective cofibrations define a left proper fibrantly generated simplicial model structure on $\hSpg$. 
\end{prop}
\begin{proof} 
That we obtain a left proper fibrantly generated model structure can be proven in essentially the same way as Theorem 1.13 in \cite{hovey}. In order to show the factorization axiom one uses a cosmall object argument and the fact that $\hShpg$ is fibrantly generated. It remains to prove that this model structure is simplicial. We have defined tensor and cotensor objects for $\hSp$ above. Let $i: A\to B$ be a cofibration of finite simplicial sets and $p: X\to Y$ a projective fibration in $\hSpg$. We have to show that the induced map
$$(i^*, p_*): \hom_{\hSpg}(B, X) \to \hom_{\hSpg}(A, X) \times_{\hom_{\hSpg}(A, Y)} \hom_{\hSpg}(B, Y)$$
is a projective fibration in $\hSpg$, which is trivial if either $i$ or $p$ is trivial. For $n\geq 0$, the $n$th map $(i^*, p_*)_n$ is given by the map of pointed profinite spaces
$$(i^*, p_{n*}): \hom_{\hShpg}(B, X_n) \to \hom_{\hShpg}(A, X_n) \times_{\hom_{\hShpg}(A, Y_n)} \hom_{\hShpg}(B, Y_n).$$
The model structure on $\hShpg$ is simplicial. This implies that $(i^*, p_{n*})$ is a fibration, since $i$ is a cofibration and $p_n$ is a fibration. Moreover, $(i^*, p_{n*})$ is a weak equivalence if either $i$ or $p_n$ is a weak equivalence. Since projective weak equivalences and projective fibrations are determined levelwise, this shows that $(i^*, p_*)$ is a projective fibration which is a trivial projective fibration if either $i$ or $p$ is trivial.
\end{proof}

In the second step we enlarge the class of weak equivalences by localizing the projective model structure. 
\begin{defn}\label{Gomega}
A profinite $G$-spectrum $E\in \hSpg$ is called an $\Omega$-spectrum if each $E_n$ is fibrant in $\hShpg$ and the adjoint structure maps 
$$E_n \to \Omega E_{n+1}=\hom_{\hShpg}(S^1,E_{n+1})$$ 
are weak equivalences in $\hShpg$ for all $n\geq 0$. A map $f:X\to Y$ of profinite $G$-spectra is called 
\begin{itemize}
\item a (stable) equivalence if it induces a weak equivalence of mapping spaces 
$$\map_{\hSpg}(Y,E) \to \map_{\hSpg}(X,E)$$ 
for all $\Omega$-spectra $E$ in $\hSpg$; 
\item a (stable) cofibration if and only if it is a projective cofibration; 
\item a (stable) fibration if it has the right lifting property with respect to all maps that are stable equivalences and stable cofibrations. 
\end{itemize}
\end{defn}
\begin{theorem}\label{Gmodelspectra}
The classes of stable equivalences, fibrations, and cofibrations of Definition \ref{Gomega} provide $\hSpg$ with a stable simplicial model structure. 
The fibrant profinite $G$-spectra are exactly the $\Omega$-spectra in $\hSpg$. Its underlying profinite spectra are $\Omega$-spectra in $\hSp$. A map in $\hSpg$ is a stable equivalence between $\Omega$-spectra if and only if it is a projective weak equivalence. 
We denote its homotopy category by $\hSHhg$.
\end{theorem}
\begin{proof}
In order to show that there is a stable model structure on $\hSpg$ we apply again the dual methods of Hovey \cite{hovey}. We know from Theorem \ref{Gmodelunstable} that $\hShpg$ satisfies the necessary properties. From Lemma \ref{endofunctor} we know that smashing with $S^1$ is a left Quillen endofunctor on $\hSpg$. By Proposition \ref{Gthmstrict}, we know that the projective model structure on $\hSpg$ is simplicial. By localizing the projective model structure with respect to the $\Omega$-spectra in $\hSpg$, we obtain a stable model structure on $\hSpg$. \\
The $\Omega$-spectra are the fibrant objects in the stable model structure, since they are the local objects. Since weak equivalences and fibrations in $\hShpg$ are determined by their underlying maps in $\hShp$, this shows that the fibrant objects in $\hSpg$ are exactly the profinite $G$-spectra whose underlying spectrum is fibrant in $\hSp$. \\
The assertion on stable equivalences between $\Omega$-spectra in $\hSpg$ follows from the general theory of Bousfield localizations.
%
%
%
\end{proof}

\begin{cor}\label{Komega}
Let $K$ be a closed subgroup of the profinite group $G$. If $X$ is an $\Omega$-spectrum in $\hSpg$, then its restriction to a profinite $K$-spectrum is also an $\Omega$-spectrum in $\hSpk$.
\end{cor}
\begin{proof}
Since weak equivalences and fibrations for $\hShpg$ and $\hShpk$ are determined by their underlying maps in $\hShp$, the assertion follows from the definition of an $\Omega$-spectrum.
\end{proof}

\begin{prop}\label{stableforgetG}
(1) The forgetful functor $\hSpg \to \hSp$ sends $\Omega$-spectra to $\Omega$-spectra and preserves stable equivalences between $\Omega$-spectra. \\
(2) The composition of forgetful functors $\hSpg \to \hSp \to \Sp$, which we also denote by $|\cdot|:\hSpg \to \Sp$, sends $\Omega$-spectra to $\Omega$-spectra and preserves stable equivalences between $\Omega$-spectra. 
\end{prop}
\begin{proof}
(1) In the model structures on $\hSpg$ and $\hSp$, stable equivalences between $\Omega$-spectra are exactly the projective equivalences. By Theorem \ref{Gmodelspectra}, the underlying profinite spectrum of an $\Omega$-spectrum in $\hSpg$ is an $\Omega$-spectrum in $\hSp$. Since projective equivalences are maps that are levelwise weak equivalences, the assertion follows from the fact that the forgetful functor $\hShpg \to \hShp$ preserves weak equivalences between fibrant objects. \\
(2) The second assertion follows from the first and from Proposition \ref{stablecompletion}.
\end{proof}

\subsection{Homotopy groups of fibrant profinite $G$-spectra}
\begin{defn}\label{Gdefofstablepis}
Let $X$ be an $\Omega$-spectrum in $\hSpg$. Its underlying profinite spectrum is fibrant in $\hSp$, and, for $n\in \Z$, we define the $n$th homotopy group of $X$ to be the $n$th homotopy group of its underlying fibrant profinite spectrum.
\end{defn}

Note that in Theorem \ref{Gmodelspectra} we did not show that the stable equivalences in $\hSpg$ are determined by their underlying maps in $\hSp$. If we start with an arbitrary profinite $G$-spectrum $Y$, we do not claim that the homotopy groups of the underlying profinite spectrum 
are isomorphic to the homotopy groups of $R_GY$, where $R_G$ denotes a functorial fibrant replacement functor in $\hSpg$. The point is that, since $Y$ is not fibrant, the fibrant replacement functors in $\hSp$ and $\hSpg$ may send it to profinite spectra which are not stably equivalent in $\hSp$. \\
But if we start with a fibrant profinite $G$-spectrum $X$, i.e. an $\Omega$-spectrum in $\hSpg$, as in Definition \ref{Gdefofstablepis}, then the homotopy groups of $X$ as a profinite $G$-spectrum are canonically isomorphic to the homotopy groups of its underlying $\Omega$-spectrum in $\hSp$ by Proposition \ref{stableforgetG}.\\
%
Moreover, the $G$-action on the $\Omega$-spectrum $X$ in $\hSpg$ induces a $G$-action on each homotopy group of $X$. 
\begin{prop}\label{Gstablepis}
Let $X$ be an $\Omega$-spectrum in $\hSpg$. Then each homotopy group $\pi_nX$ is a profinite $G$-module for every $n\in \Z$. 
\end{prop}
\begin{proof}
Since the underlying spectrum of $X$ is an $\Omega$-spectrum in $\hSp$, Lemma \ref{stablepis} shows that $\pi_nX$ is isomorphic to $\pi_{n+k}X_k$ for any $k\geq 0$ such that $n+k \geq 0$. By Proposition \ref{Gmodulepi}, the groups $\pi_{n+k}X_k$ are profinite $G$-modules. 
\end{proof}
\begin{remark}\label{Gremgencoh}
Let $X$ be again an $\Omega$-spectrum in $\hSpg$. For a spectrum $Z\in \Sp$, the generalized cohomology groups $X^kZ$ of (\ref{defgencoh}) of $Z$ with coefficients in the underlying fibrant profinite spectrum of $X$, inherit a $G$-action from $X$. By Proposition \ref{Gstablepis}, each 
$$\Hom_{\hSHh}(\Sph^n, X)= \pi_nX$$
is a profinite $G$-module. Hence (\ref{defgencoh}) provides $X^kZ$ with the structure of a profinite $G$-module. Moreover, we may consider (\ref{compgen}) as an isomorphism of profinite $G$-modules between $X^kZ$ and $|X|^kZ$. 
\end{remark}
%
%
\subsection{Homotopy limits of fibrant profinite $G$-spectra}
Let $I$ be a small category and let $X(-)$ be a functor from $I$ to the full subcategory of $\Omega$-spectra in $\hSpg$. The homotopy inverse limit of the diagram $X(-)$ in $\hSpg$ is again defined levelwise for each space. For each $n\geq0$ and each $i\in I$, the pointed profinite $G$-space $X_n(i):=X(i)_n$ is fibrant. So for every $n\geq 0$, the homotopy limit $\holim_{i\in I} X_n(i)$ in $\hShpg$ is a fibrant pointed profinite $G$-space by Lemma \ref{Gholiminv} and there is a natural isomorphism in $\hShpg$
$$\holim_{i\in I}\Omega (X_n(i)) \cong \Omega (\holim_{i\in I} X_n(i)).$$
Since each $X(i)$ is an $\Omega$-spectrum in $\hSpg$ and since $\holim_{i\in I}$ preserves weak equivalences between fibrant objects by Lemma \ref{Gholiminv}, for each $n\geq 0$, we obtain a weak equivalence in $\hShpg$
$$\holim_{i\in I} X_n(i) \stackrel{\sim}{\to} \holim_{i\in I} \Omega X_n(i) \cong \Omega \holim_{i\in I} X_n(i).$$
Hence together with these structure maps the sequence of fibrant pointed profinite $G$-spaces $\holim_{i\in I} X_n(i)$ defines a $\Omega$-spectrum in $\hSpg$ that we denote by $\holim_{i\in I} X(i)$ and call the homotopy limit of the diagram $X(-)$. 
\begin{lemma}\label{Gstableholiminv}
Let $X(-) \to Y(-)$ be a natural transformation of functors from a small category $I$ to the full subcategory of $\Omega$-spectra in $\hSpg$. Then the induced map 
$$\holim_{i\in I}X(i) \to \holim_{i\in I}Y(i)$$ 
is an equivalence of $\Omega$-spectra in $\hSpg$. 
\end{lemma}
\begin{proof}
We already know that $\holim$ sends a small diagram of $\Omega$-spectra in $\hSpg$ to an $\Omega$-spectrum in $\hSpg$. Since $\holim$ is constructed termwise and since stable equivalences between $\Omega$-spectra are exactly the projective equivalences in $\hSpg$, the assertion follows from the corresponding result for $\hShpg$ given in Lemma \ref{Gholiminv}.
\end{proof}

\begin{lemma}\label{Gstableholimlemma}
Let $X(-): I \to \hSpg$ be a small cofiltering diagram of $\Omega$-spectra in $\hSpg$. Then the isomorphism of Lemma \ref{stableholimlemma} 
$$\pi_q(|\holim_{i\in I} X(i)|) \cong \lim_{i\in I} \pi_q(|X(i)|)$$
is an isomorphism of profinite $G$-modules for every $q\in \Z$.
\end{lemma}
\begin{proof}
Since the $\Omega$-spectrum $\holim_{i\in I} X(i)$ in $\hSpg$ is constructed levelwise, the assertion follows from Lemma \ref{Gholimlemma}.
\end{proof}
%
%
\subsection{$G$-spectra with finite homotopy groups}
\begin{theorem}\label{Gstablefinitecompletion}
Let $G$ be a strongly complete profinite group. Let $X\in \Sp$ be a spectrum such that each space $X_n$ is a pointed $G$-space. We assume that the homotopy groups of $X$ are all finite groups. Then there is a $G$-equivariant map 
$$\varphi^s: X \to F^s_GX$$ 
of spectra from $X$ to a profinite $G$-spectrum $F^s_GX$ built of simplicial finite discrete $G$-sets such that $F^s_GX$ is fibrant in $\hSpg$ and $\varphi^s$ is a stable equivalence of underlying spectra. In particular, $\varphi^s$ induces an isomorphism $\pi_*X \cong \pi_*|F^s_GX|$ of the homotopy groups of underlying spectra.\\
The assignment $X\mapsto F_G^sX$ is functorial in the sense that given a map $h:X\to Y$ between spectra whose spaces are pointed $G$-spaces and whose homotopy groups are finite, there is a map $F_G^s(h)$ in $\hSpg$ such that the following diagram of underlying spectra commutes
$$\xymatrix{
X\ar[d] \ar[r]^h & Y \ar[d] \\
F_G^sX \ar[r]_{F_G^s(h)} & F_G^sY.}$$
\end{theorem}
\begin{proof}
The proof is similar to the one of Theorem \ref{stablefinitecompletion} using the replacement functor of Lemma \ref{Gfinitereplacement}. We can assume that $X$ is an $\Omega$-spectrum in $\Sp$. 
Since the homotopy groups of $X$ are finite, each of the groups $\pi_k(X_n)$ is a finite group for all $k, n \geq 0$. Moreover, since $X$ is a spectrum, we can assume that each $X_n$ is a simple connected pointed $G$-space. 
Hence we can apply the functor $F_G$ of Lemma \ref{Gfinitereplacement} to each space $X_n$ and obtain the profinite spectrum $F^s_GX$. 
By Lemma \ref{GFloop}, the induced maps 
$$F_GX_n \to \Omega(F_GX_{n+1})$$
are weak equivalences in $\hShpg$. Since each $F_GX_n$ is fibrant in $\hShpg$, this implies that the profinite $G$-spectrum $F_G^sX$ is an $\Omega$-spectrum in $\hSp$. 
The $i$th stable homotopy group of the $\Omega$-spectrum $F^s_GX$ is given by any of the isomorphic finite groups 
$$\pi_{i+n}(F_GX_n)\cong \pi_{i+n}(X_n)$$ 
for positive $n$ such that $i+n \geq0$. This shows that the map $X\to F^s_GX$ induces an isomorphism $\pi_*X \cong \pi_*|F^s_GX|$. Functoriality of $F^s_G$ follows from the functoriality of $F_G$.
\end{proof}
\section{Homotopy fixed point spectra}
\subsection{Continuous equivariant mapping spectra}
Let $Y$ be a profinite space and $W$ be a pointed profinite space. The functor $\hSh \to \hShp$, $Y\to Y_+$, defined by adding a disjoint basepoint, is the left adjoint of the functor that forgets the basepoint. Hence there is a natural isomorphism of simplicial sets
\begin{equation}\label{pointedad}
\map_{\hShp}(Y_+,W)\cong \map_{\hSh}(Y,W).
\end{equation}
The spaces on both sides of (\ref{pointedad}) are pointed by the map that factors through $\ast \to W$. Hence (\ref{pointedad}) is in fact an isomorphism of pointed simplicial sets. We will use the notation $\Map(Y,W)$ for the pointed simplicial set $\map_{\hShp}(Y_+,W)$ together with its basepoint $Y\to \ast \to W$. This defines a functor
$$\Map(-,-): \hSh \times \hShp^{\op} \to \Shp.$$
\begin{defn}\label{Mapdefn}
Let $G$ be a profinite group. Let $Y$ be a profinite $G$-space and $W$ be a pointed profinite $G$-space. We define $\Mapg(Y,W)$ to be the pointed simplicial set $\map_{\hShpg}(Y_+,W)$ pointed by the map $Y_+\to \ast \to W$. This defines a functor
$$\Mapg(-,-): \hShg \times \hShpg^{\op} \to \Shp.$$
\end{defn}
When $Y$ is a profinite $G$-space and $W$ is a pointed profinite $G$-space, we can equip the pointed simplicial set $\Map(Y,W)$ with a $G$-action by $(gf)(y):= gf(yg^{-1})$. With this $G$-action on $\Map(Y,W)$, $\Mapg(Y,W)$ is the pointed space of $G$-fixed points of the pointed  space $\Map(Y,W)$. 
\begin{lemma}\label{mapfib}
Let $Y$ be a cofibrant profinite $G$-space and $f:V\to W$ be a fibration between pointed profinite $G$-spaces. Then 
$$\Mapg(Y,f): \Mapg(Y,V)\to \Mapg(Y,W)$$ 
is a fibration of pointed simplicial sets.
\end{lemma}
\begin{proof}
Since $Y$ is cofibrant and $\hShpg$ is a simplicial model category, the map 
$$\map_{\hShpg}(Y_+,V) \to \map_{\hShg}(Y_+,W)$$ 
is a fibration of simplicial sets. Thus the $\Mapg(Y,f)$ is a fibration of pointed spaces.
\end{proof}

\begin{lemma}\label{mapwe}
Let $Y$ be a cofibrant profinite $G$-space, $f:V\to W$ be a weak equivalence between fibrant pointed profinite $G$-spaces. 
Then 
$$\Mapg(Y,f): \Mapg(Y,V)\to \Mapg(Y,W)$$ 
is a weak equivalence of fibrant pointed simplicial sets.
\end{lemma}
\begin{proof}
The profinite space $Y$ is a cofibrant object in $\hShg$ and $V$ and $W$ are fibrant objects in $\hShpg$ by assumption. Since $\hShpg$ is a simplicial model category, the induced map $\map_{\hShpg}(Y_+,V) \to \map_{\hShpg}(Y_+,W)$ is a weak equivalence of fibrant simplicial sets. Hence the pointed map $\Mapg(Y,f)$ is a weak equivalence of fibrant pointed spaces. 
\end{proof}
\begin{lemma}\label{mapfiber}
Let $Y$ be a cofibrant profinite $G$-space. Let $X(-):I\to \hShpg$ be a small diagram of fibrant pointed profinite $G$-spaces.  
Then there is a natural isomorphism of fibrant pointed simplicial sets
$$\Mapg(Y,\holim_{i\in I}X(i)) \cong \holim_{i\in I}\Mapg(Y,X(i))$$
where $\holim_{i\in I}\Mapg(Y,X(i))$ denotes the homotopy limit in $\Shp$ of the small diagram $\Mapg(Y,X(-)):I \to \Shp$ of pointed simplicial sets. In particular, $\Mapg(Y,-)$ preserves homotopy fibers.
\end{lemma}
\begin{proof}
Since $Y$ is a cofibrant object in $\hShg$, $\Mapg(Y,-)$ preserves fibrations. Moreover, being a right adjoint functor, $\Mapg(Y,-)$ preserves products and limits and sends cotensors to cotensors in the sense that there is a natural isomorphism of pointed simplicial sets
$$\Mapg(Y, \hompg(K, W)) \cong \hom_{\Shp}(K, \Mapg(Y,W))$$
for every simplicial set $K$, where the right hand space is equal to the pointed space $\map_{\Shp}(K, \Mapg(Y,W))$ whose basepoint is the map having constant image the basepoint of $\Mapg(Y,W)$. Thus $\Mapg(Y,-)$ sends the equalizer of the diagram of pointed profinite $G$-spaces
$$\prod_{i\in I} \hompg(B(I/i),X(i)) \rightrightarrows \prod_{\alpha:i\to i' \in I} \hompg(B(I/i),X(i')),$$
which is by definition $\holim_{i\in I}X(i)$, to the equalizer in $\Shp$ of the diagram of pointed spaces
$$
\prod_{i\in I} \hom_{\Shp}(B(I/i),\Mapg(Y,X(i))) \rightrightarrows \prod_{\alpha:i\to i' \in I} \hom_{\Shp}(B(I/i), \Mapg(Y,X(i'))).
$$
Since the equalizer of the last diagram is by definition $\holim_{i\in I}\Mapg(Y,X(i))$, this proves the assertion. The statement on homotopy fibers is a special case of the first assertion. 
\end{proof}

Now we turn our attention to continuous mapping spectra. 
\begin{defn}\label{stableMapdefn}
For a profinite space $Y$ and a profinite spectrum $X$, we denote by $\Map(Y, X)$ the spectrum whose $n$th space is given by the pointed simplicial set $\Map(Y,X_n)$. This defines a functor
$$\Map(-,-): \hSh \times \hSp^{\op} \to \Sp.$$
\end{defn}
\begin{defn}\label{stableMapgdefn}
Let $Y$ be a profinite $G$-space and $X$ a profinite $G$-spectrum. We define $\Mapg(Y, X)$ to be the spectrum whose $n$th space is given by the pointed simplicial set $\Mapg(Y,X_n)$ defined in Definition \ref{Mapdefn}. The structure maps are defined as follows. The compatibilities of mapping spaces and cotensors provide an isomorphism 
$$\Mapg(Y,\homp(S^1,X_{n}))\cong \hom_{\Shp}(S^1, \Mapg(Y,X_{n})).$$ 
The space on the left hand side is $\Mapg(Y,\Omega(X_n))$ and the space on the right hand side is $\Omega(\Mapg(Y,X_{n}))$. Hence the map $X_n\to \Omega X_{n+1}$ defines a map 
\begin{equation}\label{Mapstructuremap}
\Mapg(Y,X_n)) \to \Mapg(Y,\Omega X_{n+1}) \cong \Omega(\Mapg(Y,X_{n+1})).
\end{equation}
This provides a functor
$$\Mapg(-,-): \hShg \times \hSpg^{\op} \to \Sp.$$
\end{defn}
\begin{remark}\label{stableMapgactionrem}
Let $Y$ be a profinite $G$-space and $X$ a profinite $G$-spectrum. If we equip again $\Map(Y,X_n)$ with the above $G$-action, then we can consider $\Map(Y,X)$ as a spectrum which is built out of pointed spaces with a $G$-action and $\Mapg(Y,X)$ is the spectrum of fixed points of $\Map(Y,X)$. 
\end{remark}
\begin{lemma}\label{mapomega}
Let $Y$ be a cofibrant profinite $G$-space. If $X$ is an $\Omega$-spectrum in $\hSpg$, then $\Mapg(Y,X)$ is an $\Omega$-spectrum in $\Sp$.
\end{lemma}  
\begin{proof}
By Lemma \ref{mapwe} and the definition of the structure maps of $\Mapg(Y,X)$ in (\ref{Mapstructuremap}), we can conclude that if $X$ is an $\Omega$-spectrum in $\hSpg$, then $\Mapg(Y,X)$ is an $\Omega$-spectrum in $\Sp$.
\end{proof}

\begin{lemma}\label{stablemapwe}
Let $Y$ be a cofibrant profinite $G$-space. The functor $\Mapg(Y,-)$ sends stable equivalences between profinite $\Omega$-spectra in $\hSpg$ to stable equivalences between $\Omega$-spectra in $\Sp$. 
\end{lemma}
\begin{proof}
Let $f: X\to X'$ be a stable equivalence between profinite $\Omega$-spectra in $\hSpg$. By Lemma \ref{mapomega}, $\Mapg(Y,X)$ and $\Mapg(Y,X')$ are $\Omega$-spectra in $\Sp$. Since stable equivalences between $\Omega$-spectra in $\hSpg$ and $\Sp$ are exactly the projective (i.e. levelwise) equivalences, the assertion now follows from Lemma \ref{mapwe}.
\end{proof}

\begin{prop}\label{stablemapfiber}
Let $Y$ be a cofibrant profinite $G$-space. Let $X(-):I\to \hSpg$ be a small diagram of $\Omega$-spectra in $\hSpg$.   
Then there is a natural isomorphism of $\Omega$-spectra in $\Sp$
$$\Mapg(Y,\holim_{i\in I}X(i)) \cong \holim_{i\in I}\Mapg(Y,X(i))$$
where $\holim_{i\in I}\Mapg(Y,X(i))$ denotes the homotopy limit in $\Sp$ of the small diagram $\Mapg(Y,X(-)):I \to \Sp$ of $\Omega$ -spectra in $\Sp$. \\
In particular, the functor $\Mapg(Y,-)$ preserves homotopy fibers of maps between $\Omega$-spectra.   
\end{prop}
\begin{proof}
Since $\Mapg(Y,-)$ and homotopy limits are defined levelwise, the first assertion follows from Lemma \ref{mapfiber}. The second assertion is a special case of the first one.
\end{proof}

\subsection{Skeleta and coskeleta}
For $n\geq 0$, let $\sk_n: \Sh \to \Sh$ be the $n$th skeleton functor for simplicial sets. It is the left adjoint of the coskeleton functor $\cosk_n: \Sh \to \Sh$. For a simplicial set $Z$, the simplicial set $\sk_nZ$ is given by the subspace of $X$ generated by simplices of degree $\leq n$. The $k$-simplices of the $n$th coskeleton $\cosk_nZ$ are given by the set $\Hom_{\Sh}(\sk_n\Delta[k],Z)$. If $Z=Y$ is a profinite space, then $\cosk_nY$ inherits the structure of a profinite space. For $\sk_n\Delta[n]$ is a finite simplicial set, and hence the set 
$$\Hom_{\Sh}(\sk_n\Delta[k],|Y|)\cong \Hom_{\hSh}(\sk_n\Delta[k], Y)$$
inherits the structure as a profinite set.\\
Moreover, if $Y$ is a profinite $G$-space, then $\Hom_{\hSh}(\sk_n\Delta[k], Y)$ is a limit of finite discrete $G$-sets, since $\sk_n\Delta[n]$ is a finite simplicial set with trivial $G$-action. 
Thus the well-known constructions for skeleta and coskeleta yield a pair of adjoint endofunctors $(\sk_n,\cosk_n)$ on $\hShg$. \\
In terms of mapping spaces, this adjunction translates into the natural isomorphism of pointed simplicial sets
\begin{equation}\label{skcosk}
\Map_G(\sk_nW, Y)\cong \Map_G(W,\cosk_nY)
\end{equation}
for every profinite $G$-space $W$ and every pointed profinite $G$-space $Y$. The basepoint of $\cosk_nY$ is given by the basepoint of $Y$. \\
Now let $X$ be an $\Omega$-spectrum in $\hSpg$. For a given integer $n$, the $n$th coskeleton of $X$ is defined to be the profinite $G$-spectrum whose $k$th space is the pointed profinite $G$-space $\cosk_{n+k}X_k$, i.e. the $(n+k)$th coskeleton of the $k$th space $X_k$ of $X$. Each $\cosk_{n+k}X_k$ is a fibrant pointed profinite $G$-space and the induced map of pointed profinite $G$-spaces 
$$\cosk_{n+k}X_k \to \Omega(\cosk_{n+k+1}X_{k+1})$$
is a weak equivalence in $\hShpg$, since $\cosk_n$ preserves fibrant objects and weak equivalences between fibrant objects. Thus $\cosk_nX$ is an $\Omega$-spectrum in $\hSpg$ if $X$ is. \\
\begin{lemma}\label{spectrapost}
Let $X$ be an $\Omega$-spectrum in $\hSpg$. The tower 
$$\ldots \to \cosk_2X \to \cosk_1X \to \cosk_0X \to \cosk_{-1}X \to \ldots$$
is a Postnikov tower for $X$, i.e. $X=\lim_n \cosk_nX$, $\pi_qX\to \pi_q(\cosk_nX)$ is an isomorphism if $q\leq n$, and $\pi_q(\cosk_nX)=0$ if $q>n$. In particular, the fiber $F(n)$ of $\cosk_nX \to \cosk_{n-1}X$ is an Eilenberg-MacLane spectrum in $\hSpg$ whose only nontrivial homotopy group is $\pi_nF(n)\cong \pi_nX$. 
\end{lemma}
\begin{proof}
Let $n$ and $q$ be integers, and $k$ be any positive integer such that $q+m\geq 0$. Since $X$ and $\cosk_nX$ are $\Omega$-spectra in $\hSpg$, the $q$th profinite homotopy group of $X$ is given by the abelian profinite group $\pi_{q+k}X_k$, and the $q$th homotopy group of $\cosk_nX$ is given by $\pi_{q+k}((\cosk_nX)_k)=\pi_{q+k}(\cosk_{n+k}X_k)$. Hence $\pi_q(\cosk_nX)$ is isomorphic to  $\pi_qX$ if $q+k\leq n+k$, i.e. if $q \leq n$, and $\pi_q(\cosk_nX)=0$ if $q+k > n+k$, i.e. if $q>n$. \\
Since $\cosk_nX$ and $\cosk_{n-1}X$ are $\Omega$-spectra in $\hSpg$, the fiber $F(n)$ of the map $\cosk_nX \to \cosk_{n-1}X$ is an $\Omega$-spectrum in $\hSpg$ by Proposition \ref{stablemapfiber}. For $k\geq 0$, the $k$th space $F(n)_k$ is the fiber of $\cosk_{n+k}X_k \to \cosk_{n-1+k}X_k$. This fiber is the profinite $G$-space $K(\pi_{n+k}X_k,n+k)$. The structure map is given by the natural equivalence of pointed profinite $G$-spaces $K(\pi_{n+k}X_k,n+k) \to \Omega(K(\pi_{n+k}X_k,n+k+1))$. Since $X$ is an $\Omega$-spectrum in $\hSpg$, the profinite $G$-module $\pi_{n+k}X_k$ is the $n$th homotopy group of $X$. Hence the fiber $F(n)$ is an Eilenberg-MacLane spectrum whose homotopy groups $\pi_mF(n)$ vanish for $m\neq n$ and whose $n$th homotopy group is isomorphic to $\pi_nX$.  
\end{proof}

\subsection{Homotopy fixed point spectra}
As an example of a continuous mapping spectrum, let $EG$ be a contractible profinite $G$-space with a levelwise free $G$-action, in other words a cofibrant profinite $G$-space which is weakly equivalent to a point. 
\begin{defn}\label{homotopyfixedpoints}
Let $X \in \hSpg$ be a profinite $G$-spectrum and let $R_G$ be a fixed functorial fibrant replacement in $\hSpg$. We define the homotopy fixed point spectrum $X^{hG}$ of $X$ to be the function spectrum of continuous $G$-equivariant maps from $EG$ to $R_GX$, i.e. 
$$X^{hG}:=\Map_G(EG,R_GX).$$ 
\end{defn}
\begin{prop}
If $f:X\to Y$ is a stable equivalence of profinite $G$-spectra, then the induced map $f^{hG}:X^{hG} \to Y^{hG}$ is a stable equivalence between $\Omega$-spectra in $\Sp$.
\end{prop}
\begin{proof}
If $f$ is a stable equivalence in $\hSpg$, then $R_G(f)$ is a stable equivalence between $\Omega$-spectra in $\hSpg$. Now the assertion follows from Lemma \ref{mapomega}, Lemma \ref{stablemapwe}, and the fact that $EG$ is cofibrant in $\hShg$. 
\end{proof}

\begin{remark}
%
1. One should note that the homotopy fixed points of a profinite $G$-spectrum are not the homotopy limit of the $G$-action in the sense of the construction of the previous section. The homotopy limit construction would treat $G$ as an abstract group, or rather the category defined by $G$, and would forget the profinite topology. But for the homotopy fixed points we want to remember the topology of $G$. This is why we use the explicit functor $\Map_G(EG,-)$. \\
2. The reader may wonder why we do not look for homotopy fixed point spectra that are profinite spectra themselves. The goal of our construction is to obtain the descent spectral sequence of Theorem \ref{stablefixeddescent}. Since the cohomology groups $H^s(G;\pi_tX)$ are not profinite groups in general, there is no reason to expect that $\pi_{*}(X^{hG})$ to be profinite. 
\end{remark}
Before we prove our main result about continuous homotopy fixed point spectra, let us choose a concrete model for $EG$. We let $EG$ be the profinite $G$-space given in degree $n$ by the $(n+1)$-fold product $G^{n+1}$ of copies of $G$. Its $i$th face map is given by 
\begin{equation}\label{standardcosimp1}
d^i: G^{n+1} \to G^{n}, (g_1,\ldots, g_{n+1}) \mapsto (g_1, \ldots, \hat{g_{i+1}}, \ldots, g_{n+1})
\end{equation}
where the hat means that the component $g_{i+1}$ is omitted. We let $g\in G$ act on $G$ by $h\mapsto hg^{-1}$, for all $h\in G$, and let $G$ act on $EG_n$ via the diagonal action on $G^{n+1}$. This action is free, and the quotient $EG/G$ is the usual classifying space $BG$ for $G$ given in degree $n$ by $G^n$ with face maps
$$\bar{d}^i(g_1, \ldots, g_{n+1}) \left \{ \begin{array}{l@{\quad: \quad}l}
                                          (g_2, \ldots, g_{n+1}) & i=0\\
                                          (g_1, \ldots, g_ig_{i+1},\ldots,g_{n+1}) & 1\leq i \leq n \\
                                         (g_1,\ldots, g_{n}) & i=n+1.
                                          \end{array} \right. 
$$ 
\begin{theorem}\label{stablefixeddescent}
Let $G$ be a profinite group and $X$ a profinite $G$-spectrum. There is a homotopy fixed point spectral sequence whose $E_2^{s,t}$-term is the $s$th continuous cohomology of $G$ with coefficients the profinite $G$-module $\pi_tX$: 
$$E_2^{s,t}=H^s(G;\pi_tX) \Rightarrow \pi_{t-s}(X^{hG}).$$
This spectral sequence converges completely to $\pi_*(X^{hG})$ if ${\lim}^1_r E_r^{s,t}=0$ for all $s$ and $t$.
\end{theorem}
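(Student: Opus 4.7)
The plan is to build the spectral sequence from the skeletal filtration of the bar construction $EG$ and the resulting tower of function spectra, identify the pages with continuous cochains and continuous group cohomology, and finally invoke the standard tower-convergence machinery.

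First I would take $EG$ to be the simplicial bar construction with $EG_n = G^{n+1}$ carrying the continuous free $G$-action by left multiplication on the first factor; this is a cofibrant contractible object of $\hShpg$, and its $n$-th skeletal cofiber $(\sk_n EG)_+/(\sk_{n-1} EG)_+$ is weakly equivalent in $\hShpg$ to $(G^n)_+ \wedge S^n$ with a free $G$-action on the $G^n$ factor. Applying $\homg(-, R_GE)$ to the skeletal filtration produces a tower of spectra
$$\cdots \to \homg((\sk_n EG)_+, R_GE) \to \homg((\sk_{n-1}EG)_+, R_GE) \to \cdots$$
whose homotopy inverse limit is $E^{hG} = \homg(EG_+, R_GE)$. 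Applying $[Z,-]^{\ast}$ then produces a Bousfield--Kan type spectral sequence of a tower of fibrations with
$$E_1^{s,t} = [Z,\, \homg((G^s)_+ \wedge S^s, R_GE)]^{s+t}.$$

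The crucial step is to identify this $E_1$-term with continuous cochains. Freeness of the $G$-action on $G^s$ via one factor gives $\homg((G^s)_+, R_GE) \cong \hom((G^{s-1})_+, R_GE)$, so $E_1^{s,t} \cong E^t(Z \wedge (G^{s-1})_+)$. Since the profinite $G$-module structure on $E^tZ$ is given by $E^tZ = \lim_{k,\alpha} E^tZ_\alpha/I_kE^tZ_\alpha$ and $G^{s-1}$ is profinite, any such cohomology class factors through a finite quotient $G^{s-1}/U^{s-1}$ and a finite quotient of $E^tZ_\alpha$; this identifies $E_1^{s,t}$ with the group $C^s(G; E^tZ)$ of continuous normalized $s$-cochains on $G$ with values in the profinite $G$-module $E^tZ$. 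The $d_1$-differential, induced by the alternating sum of face maps of $EG$, is precisely the standard cobar differential on continuous cochains, so $E_2^{s,t} = H^s(G; E^tZ)$, which is continuous group cohomology by the convention of the paper.

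For convergence, I would invoke Boardman's conditional convergence theorem for spectral sequences of towers of fibrations: the spectral sequence converges conditionally to
$$[Z,\, \holim_n \homg((\sk_n EG)_+, R_GE)]^{s+t} = (E^{hG})^{s+t}Z,$$
and conditional convergence together with the hypothesis $\lim^1_r E_r^{s,t} = 0$ upgrades automatically to strong convergence. The main obstacle I expect is the precise identification $E_1^{s,t} \cong C^s(G; E^tZ)$: it requires simultaneously using the profinite topology on $G^{s-1}$, the pro-system describing $E^tZ$ through the ideals $\{I_k\}$ and finite subspectra $\{Z_\alpha\}$, and the commutation of $[Z,-]^{\ast}$ with the relevant inverse limits. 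Once this identification and the verification that $d_1$ is the cobar differential are in place, the remainder is formal.
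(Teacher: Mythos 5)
Your proposal is correct and is essentially the paper's own argument in dual clothing: the tower $\homg((\sk_s EG)_+,R_GE)$ obtained from the skeletal filtration of the bar construction is exactly the tower $\Tot_s$ of the cosimplicial spectrum $\Pi^{\ast}_{cts}E=\hom_{\hSp}(G^{\bullet+1},E)$ that the paper feeds into the Bousfield--Kan homotopy limit spectral sequence, and your identification of the $E_1$/$E_2$-terms with continuous (normalized) cochains via the profinite decomposition of $E^tZ$ matches the paper's isomorphism $[Z,\hom(G^{n+1},E)]^t\cong\Hom_{cts}(G^{n+1},[Z,E]^t)$. Only a cosmetic off-by-one in your description of the skeletal cofibers (it should be $(G^{s+1}/\mathrm{degen})_+\wedge S^s$, yielding inhomogeneous cochains $\Map_{cts}(G^s,E^tZ)$ in filtration $s$) needs correcting; the convergence argument is the same.
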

\begin{proof} 
After applying the fibrant replacement functor $R_G$ we can assume that $X$ is fibrant in $\hSpg$. Let $\sk_nEG$ be the $n$th skeleton of $EG$ in $\hShg$. The induced $G$-equivariant map $\sk_{n-1}EG \to \sk_nEG$ is a cofibration of profinite $G$-spaces (since we only add free copies of $G$ in dimension $n$ as nondegenerate simplices). Filtering $EG$ by its finite skeleta we obtain a tower of spectra
\begin{equation}\label{sktower}
\ldots \to \Map_G(\sk_nEG,X) \to \Map_G(\sk_{n-1}EG,X) \to \ldots
\end{equation}
$$\{X(n):=\Map_G(\sk_nEG,X)\}_n$$
whose limit is isomorphic to $X^{hG}$. 
Since the skeleton $\sk_nEG$ is a cofibrant profinite $G$-space for every $n\geq 0$, every spectrum $\Map_G(\sk_nEG,X)$ is an $\Omega$-spectrum in $\Sp$ by Lemma \ref{mapomega}, and each map 
$$\Map_G(\sk_nEG,X) \to \Map_G(\sk_{n-1}EG,X)$$ 
is a projective (or, in the terminology used in \cite{bousfried}, a strict) fibration in $\Sp$ by Lemma \ref{mapfib}. Now we can use the fact that the projective fibrations between $\Omega$-spectra are exactly the stable fibrations. For $\Sp$ this was proved in \cite{bousfried}, Lemma A.8. But this is just a special case of the general fact that in a left Bousfield localization $L_{\Ch}\Mh$ of a model category $\Mh$ a map between $\Ch$-local objects is a fibration in $L_{\Ch}\Mh$ if and only if it is a fibration in $\Mh$ by \cite{hirsch}. Thus (\ref{sktower}) is in fact a tower of stable fibrations between $\Omega$-spectra in $\Sp$. \\
By the adjunction of skeleta and coskeleta, tower (\ref{sktower}) is naturally isomorphic to the tower of stable fibrations of $\Omega$-spectra in $\Sp$
\begin{equation}\label{tower}
\ldots \to \Map_G(EG,\cosk_nX) \to \Map_G(EG,\cosk_{n-1}X) \to \ldots
\end{equation}
By Lemma \ref{spectrapost}, the fiber of the map $\cosk_nX\to \cosk_{n-1}X$ is an Eilenberg-MacLane spectrum $H\pi_nX[n]$. Since $\pi_nX$ is a profinite $G$-module, $H\pi_nX[n]$ is a fibrant object of $\hSpg$ whose $k$th space is the pointed profinite $G$-space $K(\pi_nX,k+n)$. 
Now the crucial point is that the functor $\Map_G(EG,-)$ sending $\Omega$-spectra in $\hSpg$ to $\Omega$-spectra in $\Sp$ preserves homotopy fibers by Proposition \ref{stablemapfiber}. Hence, for every $n$, tower (\ref{tower}) induces a long exact sequence of abelian groups
$$\ldots \to \pi_{k+1}(\Mapg(EG,\cosk_{n-1}X))\to \pi_k(\Mapg(EG,F(n))) \to \pi_k(\Mapg(EG,\cosk_nX)) \to \ldots$$
continuing with $\pi_k(\Mapg(EG,\cosk_{n-1}X))$ and so on. This sequence yields exact couples for various $n$ and we obtain a spectral sequence 
$$E_2^{s,t}=\pi_{t-s}(\Map_G(EG,F(t)))\Rightarrow \pi_{t-s}(X^{hG}).$$
Since $F(t)$ is an $\Omega$-spectrum in $\hSpg$ equivalent to $H\pi_tX[t]$, the fiber of the map 
$$\Map_G(EG,\cosk_tX) \to \Map_G(EG,\cosk_{t-1}X)$$
is the $\Omega$-spectrum $\Map_G(EG,H\pi_tX[t])$ in $\Sp$. The $(t-s)$th homotopy group of this spectrum is given by any of the isomorphic abelian groups 
$$\pi_{t-s-k}(\Map_G(EG,K(\pi_{t+k}X_k,t+k)))$$
such that $t+k\geq0$. The following lemma shows that this this group is exactly the continuous cohomology group $H^s(G; \pi_tX)$. Finally, complete convergence follows as in \cite{bouskan}, IX \S 5. 
\end{proof}
To finish the proof of Theorem \ref{stablefixeddescent}, it remains to prove the following fact.
\begin{lemma}\label{ctscoh}
Let $\pi$ be a profinite $G$-module. For every $n \geq0$, there is a natural isomorphism 
$$\pi_q(\Map_G(EG,K(\pi,n)))\cong H^{n-q}(G; \pi)$$
between the homotopy groups of the fibrant pointed space $\Map_G(EG,K(\pi,n))$ and the continuous cohomology of $G$ with coefficients in $\pi$.
\end{lemma}  
\begin{proof}
The weak equivalence $K(\pi,n)\to \Omega(K(\pi,n+1))$ of pointed profinite $G$-spaces induces a weak equivalence 
$$\Map_G(EG,K(\pi,n))\to \Omega(\Map_G(EG,K(\pi,n+1)))$$ 
of pointed simplicial sets. Hence by induction it suffices to show 
$$\pi_0(\Map_G(EG,K(\pi,n)))\cong H^{n}(G; \pi).$$
The fibrant space $K(\pi,n)$ represents continuous cohomology in $\hSh$ with coefficients in the profinite abelian group $\pi$. In particular, this means that we have a natural isomorphism
$$\Hom_{\hSh}(Y, K(\pi,n)) \cong Z^n(Y_n; \pi)$$
where the right hand side denotes the set of continuous cocycles in the cochain complex of continuous maps
$$C^n(Y; \pi)=\Hom_{\hEh}(Y_n,\pi).$$ 
The differential $\delta^n:C^n(Y;\pi)\to C^{n+1}(Y;\pi)$ is the map associating to the continuous map $\alpha:Y_n\to \pi$ the map $\sum_{i=0}^{n+1}(-1)^i\alpha \circ d_i$, where $d_i$ denotes the $i$th face map $Y_{n+1}\to Y_n$.
For $Y=EG$, $C^n(EG;\pi)$ is just the set 
$$C^n(G;\pi)=\Hom_{\hEh}(G^{n+1}; \pi)$$
of continuous maps from the $(n+1)$-fold product of copies of $G$ to $\pi$ and $\delta^n$ sends a map $\alpha: G^{n+1}\to \pi$ to the map given by
$$(g_0, \ldots, g_{n+1}) \mapsto \sum_{i=0}^{n+1}(-1)^i\alpha(g_0, \ldots, \hat{g_i}, \ldots, g_{n+1}).$$ 
The subcomplex of $G$-equivariant continuous maps $C^*_G(G;\pi)\subset C^*(G;\pi)$ is given in degree $n$ by the maps $\alpha$ such that 
$$g\alpha(g_1,\ldots, g_{n+1})=\alpha(g_1g^{-1}, \ldots, g_{n+1}g^{-1}).$$
The homology groups of this cochain complex are the continuous cohomology groups of $G$ with coefficients in the profinite $G$-module $\pi$. 
Since the differentials in $C^*(G;\pi)$ are $G$-equivariant, the group of $n$th cocycles $Z_G^n(G; \pi)$ in the complex $C^*_G(G;\pi)$ is equal to the subgroup of $G$-equivariant maps in $Z^n(G;\pi)$. In terms of maps from $EG$ to $K(\pi,n)$ this means that there is a natural isomorphism
$$\Hom_{\hShg}(EG, K(\pi,n)) \cong Z_G^n(G; \pi).$$
Since the left hand side is just the set of $0$-simplices of $\Map_G(EG,K(\pi,n))$ we obtain an isomorphism
$$(\Map_G(EG,K(\pi,n)))_0 \cong Z_G^n(G; \pi).$$
Now it suffices to remark that, since $K(\pi,n)$ is a fibrant profinite $G$-space, that the equivalence relation 
$$d_0,d_1:(\Map_G(EG,K(\pi,n)))_1 \rightrightarrows (\Map_G(EG,K(\pi,n)))_0$$ 
given by simplicial homotopy, which defines the equalizer of this diagram, induces an isomorphism 
$$\pi_0\Map_G(EG,K(\pi,n)) \cong H^n(G; \pi).$$ 
%
%
%
%
%
\end{proof}

\subsection{Cosimplicial descriptions of the homotopy fixed point spectral sequence}
The homotopy fixed point spectral sequence of Theorem \ref{stablefixeddescent} and the definition of homotopy fixed points can also be given in terms of cosimplicial spectra and total spaces of cosimplicial objects. Let $V^{\bullet}$ be a cosimplicial pointed space, i.e. a cosimplicial object in $\Shp$. The total space $\Tot V^{\bullet}$ of $V^{\bullet}$ (see \cite{bouskan}, X, \S 3) is given by the equalizer in $\Shp$ of the diagram 
$$\prod_{n\geq 0} \homp(\Delta[n],V^n) \rightrightarrows \prod_{\varphi:[n]\to [m]} \homp(\Delta[n],V^m).$$
For $k\geq0$, let $\mathrm{Tot}_k\,V^{\bullet}$ be the equalizer in $\Shp$ of the diagram 
$$\prod_{n\geq 0} \homp(\sk_k\Delta[n],V^n) \rightrightarrows \prod_{\varphi:[n]\to [m]} \homp(\sk_k\Delta[n],V^m).$$
Now let $Z^{\bullet}$ be a cosimplicial spectrum, i.e. a cosimplicial object in $\Sp$. The total spectrum $\Tot Z^{\bullet}$ is the spectrum whose $n$th space is the total space of the cosimplicial space $Z_n^{\bullet}$ (see \cite{thomason}, Definition 5.24). Since $\Tot$ commutes with cotensor objects for spaces, the structure maps of $\Tot Z^{\bullet}$ are given by the map
$$\Tot Z_n^{\bullet} \to \Tot (\Omega Z_{n+1}^{\bullet}) \cong \Omega(\Tot Z_{n+1}^{\bullet}).$$
Since cotensor objects in $\Sp$ are defined levelwise, $\Tot$ also commutes with cotensor objects for spectra. For $k\geq 0$, $\mathrm{Tot}_k\,Z^{\bullet}$ is defined to be the spectrum obtained by applying $\mathrm{Tot}_k$ levelwise.  \\

Now let $W$ be a profinite $G$-space and $X$ a profinite $G$-spectrum. Considering the $n$th profinite set $W_n$ of $W$ as a constant simplicial profinite $G$-space, we can view $W$ also as a simplicial object in $\hShg$. Applying the functor $\Mapg(-,X)$ to $W$ yields a cosimplicial spectrum which we denote by $\Mapg(W^{\bullet},X)$. 
\begin{lemma}\label{Tot}
{\rm (a)} For $W$ and $X$ as above, the total spectrum $\Tot(\Mapg(W^{\bullet},X))$ is naturally isomorphic to the spectrum $\Mapg(W,X)$.\\ 
{\rm (b)} For every $k\geq0$, the spectrum $\mathrm{Tot}_k(\Mapg(W^{\bullet},X))$ is naturally isomorphic to the spectrum $\Mapg(\sk_kW,X)$.
\end{lemma}
\begin{proof}
Since the total spectrum is defined levelwise, it suffices to prove the corresponding assertions when $X$ is a pointed profinite $G$-space. In this case, (a) follows immediately from the description of $\Tot(\Mapg(W^{\bullet},X))$ as the equalizer in $\Shp$ of the diagram
$$\prod_{n\geq 0} \homp(\Delta[n],\Mapg(W^n,X)) \rightrightarrows \prod_{\varphi:[n]\to [m]} \homp(\Delta[n],\Mapg(W^m,X)).$$
This proves (a). Claim (b) follows in the same way by considering the corresponding equalizer diagram for $\mathrm{Tot}_k$. 
\end{proof}
For $W=EG \in \hShg$, we obtain a cosimplicial spectrum $\Mapg(G^{\bullet+1},X)$ whose $n$th spectrum is $\Mapg(G^{n+1},X)$. The previous lemma and the construction of the homotopy fixed point spectral sequence in the proof of Theorem \ref{stablefixeddescent} imply the following result. 
\begin{prop}\label{sscomparison1}
Let $X$ be a fibrant profinite $G$-spectrum. \\
{\rm (a)} There is a natural isomorphism of spectra 
$$X^{hG}=\Mapg(EG,X)\cong \Tot(\Mapg(G^{\bullet+1},X)).$$ 
%
{\rm (b)} The spectral sequence of Theorem \ref{stablefixeddescent} is isomorphic to the spectral sequence associated to the tower of spectra 
$$\{\mathrm{Tot}_k\,(\Mapg(G^{\bullet+1},X))\}_k.$$
\end{prop}

In \cite{thomason}, Definition 5.23, Thomason calls a cosimplicial spectrum $Z$ a cosimplicial fibrant spectrum, if each spectrum $Z^n$ is a fibrant spectrum, and calls $Z$ a fibrant cosimplicial fibrant spectrum if in addition each cosimplicial space $Z_m^{\bullet}$ is Reedy fibrant  (see \cite{hirsch}, 15.3.2, or \cite{bouskan}, X, 4.6).
\begin{lemma}\label{cosimpfibrant}
{\rm (a)} Let $V$ be a fibrant pointed profinite $G$-space. Then $\Mapg(G^{\bullet+1},V)$ is a fibrant cosimplicial space in the Reedy model category structure on cosimplicial spaces.\\
{\rm (b)} Let $X$ be a fibrant profinite $G$-spectrum. Then $\Mapg(G^{\bullet+1},X)$ is a fibrant cosimplicial fibrant spectrum in the sense of \cite{thomason}, Definition 5.23.
\end{lemma}
\begin{proof}
(a) By \cite{hirsch}, Lemma 15.11.10, in order to show that $\Mapg(G^{\bullet+1},V)$ is Reedy fibrant it suffices to show that $G^{\bullet+1}$ is a Reedy cofibrant simplicial profinite $G$-space. This means that the map from the $n$th latching object $L_nG^{\bullet+1}$ to $G^{n+1}$ is a cofibration in $\hShg$. Since the $L_nG^{\bullet+1}$ is a subobject of $G^{n+1}$ in $\hSh$ and since the action of $G$ on $G^{n+1}$ is free, the map $L_nG^{\bullet+1}\to G^{n+1}$ is in fact a cofibration in $\hShg$.\\
Claim (b) follows from (a) and the fact that $\Mapg(G^{n+1},X)$ is an $\Omega$-spectrum for every $n$. 
\end{proof}

Finally, we would like to replace $\Mapg(G^{\bullet+1},X)$ by a cosimplicial spectrum of the form $\Map(G^{\bullet},X)$ given in cosimplicial degree $n$ by the spectrum $\Map(G^n,X)$. \\ 
Recall that the $i$th coface map 
$$d^i: \Mapg(G^{n}, X) \to \Mapg(G^{n+1},X)$$
is given by sending a map $\alpha: G^n\to X$ to the map $d^i(\alpha):G^{n+1}\to X$ that sends the $(n+1)$-tuple $(g_1,\ldots, g_{n+1})$ to $\alpha(g_1, \ldots, \hat{g_{i+1}}, \ldots, g_{n+1})$. \\
\begin{remark}
Here and in the following discussion we abuse notations and describe elements in $\Mapg(G^{n+1},X)$ or $\Map(G^n,X)$ simply by their effects on tuples of elements on $G$ and neglect that a general element in the $k$th set $\Mapg(G^{n+1},X_m)_k$ of the $m$th space is a $G$-equivariant map $G^{n+1}\times \Delta[k] \to X_m$, respectively a map $G^n\times \Delta[k]\to X_m$ in $\Map(G^n,X_m)_k$. Moreover, we will only discuss the coface maps, since they are important for the resulting cochain structures, and omit the calculations for the codegeneracy maps.
\end{remark}
On $\Map(G^{\bullet},X)$ we define the $i$th coface map $\tilde{d}^i: \Map(G^{n-1},X)\to \Map(G^n,X)$ to be the map which sends a map $\beta: G^{n-1}\to X$ to the map 
\begin{equation}\label{cosimpstructure2}
\tilde{d}^i(\beta)(g_1,\ldots,g_n)= \left \{ \begin{array}{l@{\quad: \quad}l}
                                          \beta(g_1,\ldots,\hat{g}_{i+1}, \ldots, g_n) & 0\leq i \leq n-1\\
                                         g^{-1}_{n}\beta(g_1g_n^{-1},\ldots,g_{n-1}g_n^{-1}) & i=n.
                                          \end{array} \right. 
\end{equation}
We then have a morphism 
$$\varphi: \Mapg(G^{\bullet+1},X)\to \Map(G^{\bullet},X)$$ 
of cosimplicial spectra defined as follows. For $\alpha\in \Mapg(G^{n},X)$, we define the map $\varphi(\alpha)\in \Map(G^{n-1},X)$ by
$$\varphi(\alpha)(g_1,\ldots, g_{n-1}) := \alpha(g_1, \ldots, g_{n-1}, 1).$$
The map $\varphi$ has an inverse $\varphi^{-1}$ defined by sending a map $\beta\in \Map(G^{n-1},X)$ to the map $\alpha=\varphi^{-1}(\beta)$ defined by 
$$\varphi^{-1}(\beta)(g_1,\ldots,g_n):=g_n^{-1}\beta(g_1g_n^{-1},\ldots, g_{n-1}g_n^{-1}).$$
Remembering that we let $G$ act on itself by letting $g\in G$ act by $h\mapsto hg^{-1}$, we see that $\varphi^{-1}(\beta)$ is in fact a $G$-equivariant map; for we have for every $g\in G$
$$\begin{array}{rcl}
g\varphi^{-1}(\beta)(g_1,\ldots,g_n) & = & gg_n^{-1}\beta(g_1g_n^{-1},\ldots, g_{n-1}g_n^{-1}) \\
 & = & (g_ng^{-1})^{-1}\beta(g_1g^{-1}(g_ng^{-1})^{-1},\ldots, g_{n-1}g^{-1}(g_ng^{-1})^{-1})\\
 & =& \varphi^{-1}(\beta)(g_1g^{-1},\ldots, g_ng^{-1}).
\end{array}$$
Now one can check
$$\begin{array}{rcl}
\varphi^{-1}(\varphi(\alpha))(g_1,\ldots,g_n) & = & g_n^{-1}\varphi(\alpha)(g_1g_n^{-1},\ldots, g_{n-1}g_n^{-1})\\
& = & g_n^{-1}\alpha(g_1g_n^{-1},\ldots, g_{n-1}g_n^{-1},1)\\
 & = & \alpha(g_1g_n^{-1}g_n,\ldots, g_{n-1}g_n^{-1}g_n, 1g_n)\\
 & = & \alpha(g_1,\ldots, g_n)
\end{array}$$
where the third equality uses that $\alpha$ is $G$-equivariant. 
On the other hand we have
$$\begin{array}{rcl}
\varphi(\varphi^{-1}(\beta))(g_1,\ldots,g_{n-1}) & = & \varphi^{-1}(\beta)(g_1,\ldots, g_{n-1},1)\\
 & = & \beta(g_1,\ldots, g_{n-1}).
\end{array}$$
Moreover, the maps $\varphi$ and $\varphi^{-1}$ are compatible with the coface maps. We start with $\varphi$. For $0\leq i\leq n-1$, we have 
$$\begin{array}{rcl}
\varphi(d^i(\alpha))(g_1,\ldots,g_{n}) & = & d^i(\alpha)(g_1, \ldots,  g_{n}, 1)\\
 & = & \alpha(g_1,\ldots, \hat{g_{i+1}}, \ldots, g_{n},1)\\
  & = & 1\alpha(g_11,\ldots, \hat{g_{i+1}}, \ldots, g_{n}1)\\
 & = & \tilde{d}^{i}(\varphi(\alpha))(g_1,\ldots, g_{n}).
\end{array}$$ 
For $i= n$, we have 
$$\begin{array}{rcl}
\varphi(d^n(\alpha))(g_1,\ldots,g_{n}) & = & d^n(\alpha)(g_1, \ldots,  g_{n}, 1)\\
 & = & \alpha(g_1, \ldots, g_{n})\\
  & = & g_n^{-1}\alpha(g_1g_n^{-1},\ldots, g_{n-1}g_n^{-1},1)\\
    & = & g_n^{-1}\varphi(\alpha)(g_1g_n^{-1},\ldots, g_{n-1}g_n^{-1})\\
 & = & \tilde{d}^{n}(\varphi(\alpha))(g_1,\ldots, g_{n}).
\end{array}$$ 

Now we check $\varphi^{-1}$. For $0\leq i \leq n-1$, we have
$$\begin{array}{rcl}
\varphi^{-1}(\tilde{d}^i(\beta))(g_1,\ldots,g_{n+1}) & = & g_{n+1}^{-1}\tilde{d}^i(\beta)(g_1g_{n+1}^{-1}, \ldots, g_{n-1}g_{n+1}^{-1})\\
 & = & g_{n+1}^{-1}\beta(g_1g_{n+1}^{-1},\ldots, g_{i}g_{n+1}^{-1}, g_{i+2}g_{n+1}^{-1}, \ldots, g_{n-1}g_{n+1}^{-1})\\
 & = & \varphi^{-1}(\beta)(g_1, \ldots, \hat{g_{i+1}}, g_{n+1})\\
 & = & d^i(\varphi^{-1}(\beta))(g_1,\ldots, g_{n+1}).
\end{array}$$ 
For $i=n$, we have
$$\begin{array}{rcl}
\varphi^{-1}(\tilde{d}^n(\beta))(g_1,\ldots,g_{n+1}) & = & g_{n+1}^{-1}\tilde{d}^n(\beta)(g_1g_{n+1}^{-1},\ldots, g_{n}g_{n+1}^{-1}))\\
 & = & g_{n+1}^{-1}(g_{n}g_{n+1}^{-1})^{-1}\beta(g_1g_{n+1}^{-1}(g_{n}g_{n+1}^{-1})^{-1},\ldots, g_{n-1}g_{n+1}^{-1}(g_{n}g_{n+1}^{-1})^{-1})\\
 & = & g_{n}^{-1}\beta(g_1g_{n}^{-1},\ldots, g_{n-1}g_{n}^{-1})\\
 & = & \varphi^{-1}(\beta)(g_1,\ldots,g_{n})\\
 & = & d^{n}(\varphi^{-1}(\beta))(g_1,\ldots, g_{n+1}).
\end{array}$$ 
Together with Proposition \ref{sscomparison1}, this proves the following comparison result.
\begin{prop}\label{sscomparison3}
Let $X$ be a fibrant profinite $G$-spectrum. We equip $\Map(G^{\bullet},X)$ with the cosimplicial structure of (\ref{cosimpstructure2}). The map $\varphi$ is an isomorphism of cosimplicial spectra and induces an isomorphism 
$$X^{hG} \cong \Tot(\Map(G^{\bullet},X)).$$
The map $\varphi$ induces a map of towers of spectra
$$\{\mathrm{Tot}_k\,(\Mapg(G^{\bullet+1},X))\}_k \to \{\mathrm{Tot}_k\,(\Map(G^{\bullet},X))\}_k$$
and an isomorphism of spectral seuences from the spectral sequence of Theorem \ref{stablefixeddescent} to the spectral sequence associated to the tower of spectra 
$$\{\mathrm{Tot}_k\,(\Map(G^{\bullet},X))\}_k.$$ 
The former spectral sequence converges to $\pi_{t-s}(X^{hG})$, and converges completely if ${\lim}^1_r E_r^{s,t}=0$ for all $s$ and $t$.
\end{prop}

\begin{remark}\label{sscomparisonremark}
In order to show that the spectral sequences of the proposition are isomorphic, one could also use the following fact proved in \cite{devinatz} and \cite{devinatzhopkins}. For a profinite $G$-module $M$, let $\Map(G^{\bullet},M)$ be the cochain complex of continuous, given in degree $n$ by the group of continuous maps from $G^n$ to $M$ with differentials $\sum_{i=0}^{n}\tilde{d}^i$, where $\tilde{d}^i$ is defined as in (\ref{cosimpstructure2}) with $X$ replaced by $M$. The cohomology of this cochain complex is isomorphic to the continuous cohomology $H^*(G;M)$ of $G$ with coefficients in $M$ (see \cite{devinatz}, Proof of Theorem 3.1 on page 139). This also shows that the map $\varphi$ induces an isomorphism of spectral sequences from the $E_2$-terms on.
\end{remark}
\subsection{Iterated homotopy fixed point spectra}
Now let $H$ be a closed normal subgroup of $G$. The homotopy fixed points under the action of $G$, $H$ and $G/H$ should be related to each other. As we mentioned above, the homotopy fixed point spectrum $X^{hH}$ is in general not a profinite spectrum anymore. Hence, in general, we cannot ask for a continuous action of $G/H$ on $X^{hH}$. But the homotopy fixed points $X^{hG}$ do respect the continuity of the action. So for comparing $X^{hG}$ with $X^{hH}$ under its induced $G/H$-action, we assume that $K:=G/H$ is a finite group.\\ 
Starting from the model structure of simplicial $K$-sets in \cite{gj}, V \S 2, we can define the category of $K$-spectra $\Spk$ as in Definition \ref{Gspectra}. Then the method of Hovey \cite{hovey} yields again a stable model structure on $\Spk$. As above, a map $f$ in $\Spk$ is a fibration if and only if its underlying map in $\Sp$ is a fibration of Bousfield-Friedlander spectra.\\
The induced map $X^{hG} \to X^{hH}$ factors through the fixed points $(X^{hH})^K$, since $K$ acts trivially on $X^{hG}$. By composing with the canonical map from fixed points $(X^{hH})^K=\Map_K(\ast,X^{hH})$ to homotopy fixed points $(X^{hH})^{hK}=\Map_K(EK, X^{hH})$, we get  the map
$$X^{hG} \to (X^{hH})^K \to (X^{hH})^{hK}.$$
It follows almost from the definitions that this map is an equivalence. We summarize this in the following theorem in which the last assertion follows from the first and the older brother of Theorem \ref{stablefixeddescent} for finite groups.
\begin{theorem}\label{iterated}
Let $X$ be a profinite $G$-spectrum and let $H$ be an normal subgroup of $G$ with finite quotient $K=G/H$. Then the map $X^{hG}\stackrel{\simeq}{\to} (X^{hH})^{hK}$ is a stable equivalence in $\Sp$. Moreover, there is a spectral sequence for iterated homotopy fixed points
$$H^{s}(K;\pi_t(X^{hH})) \Rightarrow \pi_{t-s}(X^{hG}).$$
\end{theorem}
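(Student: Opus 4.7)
The strategy is to realize $(X^{hH})^{hF}$ as $\homg((EG\times EF)_+,R_GX)$ for a suitable contractible free $G$-model of $EG$, then identify the comparison map with precomposition along the projection $EG\times EF\to EG$, and finally obtain the spectral sequence by applying the classical homotopy fixed point spectral sequence for the finite group $F$ to the naive $F$-spectrum $X^{hH}$.

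First, fix a contractible free profinite $F$-space $EF$, regarded as a pointed profinite $G$-space via the quotient $G\to F$. The diagonal $G$-action on $EG\times EF$ is free (since $G$ acts freely on $EG$), so by Theorem~\ref{Gmodelunstable} the product is cofibrant in $\hShpg$, and it is plainly contractible. The projection $\pi\colon EG\times EF\to EG$ is therefore a $G$-equivariant weak equivalence of cofibrant objects in $\hShpg$. Since $R_GX$ is fibrant in $\hSpg$ (Remark~\ref{stableGfibrations}) and $\hSpg$ is simplicial (Theorem~\ref{Gmodelsymmspectra}), precomposition with $\pi$ yields a stable equivalence
\[
X^{hG}=\homg(EG_+,R_GX)\stackrel{\simeq}{\longrightarrow}\homg((EG\times EF)_+,R_GX).
\]

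Next, because $H$ acts trivially on $EF$, a $G$-equivariant map $EF_+\wedge EG_+\to R_GX$ is the same as an $F$-equivariant map $EF_+\to \hom_H(EG_+,R_GX)$, where the $F$-action on the target is induced by taking $H$-fixed points of the conjugation $G$-action on $\hom(EG_+,R_GX)$. By Lemma~\ref{(2)}, $\hom_H(EG_+,R_GX)$ is a model for $X^{hH}$, and it is fibrant in $\Spf$ by the same reasoning as in Remark~\ref{stableGfibrations}. Hence there is a natural isomorphism
\[
\homg((EG\times EF)_+,R_GX)\;\cong\;\hom_F(EF_+,X^{hH})=(X^{hH})^{hF},
\]
and the composition of the two displayed arrows is precisely the natural comparison map $X^{hG}\to(X^{hH})^{hF}$, proving the first assertion.

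For the spectral sequence, since $F$ is finite, continuous cohomology of $F$ coincides with ordinary group cohomology, and the Bousfield--Kan homotopy fixed point spectral sequence applied to the action of the finite group $F$ on the (fibrant) naive $F$-spectrum $X^{hH}$ takes the form recalled in (\ref{dssintro}),
\[
H^{\ast}(F;(X^{hH})^{\ast}Z)\;\Longrightarrow\;((X^{hH})^{hF})^{\ast}Z.
\]
Transporting the abutment along the equivalence $(X^{hH})^{hF}\simeq X^{hG}$ of the previous step yields the claimed spectral sequence. The main obstacle---really the reason finiteness of $F$ must be imposed---is that $X^{hH}$ is in general no longer profinite, so the continuous descent spectral sequence of Theorem~\ref{stablefixeddescent} cannot be applied directly to the second layer; once $F$ is finite, however, the classical equivariant stable homotopy theory on $\Spf$ takes over and the whole argument goes through.
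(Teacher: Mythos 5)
Your proposal is correct and follows essentially the same route as the paper: both replace $EG$ by the free contractible $G$-space $EG\times EF$, use the adjunction (with $H$ acting trivially on $EF$) to peel off the $H$- and then $F$-fixed points via Lemma~\ref{(2)}, and obtain the spectral sequence from the classical finite-group homotopy fixed point spectral sequence applied to the naive $F$-spectrum $X^{hH}$. The only cosmetic difference is that you justify the first equivalence by exhibiting the projection $EG\times EF\to EG$ as a weak equivalence of cofibrant objects mapped into the fibrant $R_GX$, which is exactly the content the paper leaves implicit.
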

%
%
\section{Morava stabilizer groups and Lubin-Tate spectra}
\subsection{$E_n$ has a model as a profinite $G_n$-spectrum}
We return to our main example discussed in the introduction. For a fixed prime number $p$ and an integer  $n\geq 1$, let $G_n$ denote the extended Morava stabilizer group associated to the height $n$ Honda formal group law $\Gamma_n$ over $\F_{p^n}$. It is a profinite group and can also be described as the group of automorphisms of the Lubin-Tate spectrum $E_n$ in the stable homotopy category. Hopkins and Miller have shown that $G_n$ is in fact the automorphism group of $E_n$ by $\Ah_{\infty}$-maps, cf. \cite{rezk}. Later on, Goerss and Hopkins extended this result to the $E_{\infty}$-setting.  Hence $G_n$ acts on the spectrum-level on $E_n$ by $E_{\infty}$-ring maps, cf. \cite{goersshopkins}. We will show now that there is a canonical model for $E_n$ in the category of profinite $G_n$-spectra.\\
Let $BP$ be the Brown-Peterson spectrum for the fixed prime $p$. Its coefficient ring is $BP_*=\Z_{(p)}[v_1,v_2,\ldots]$, where $v_n$ has degree $2(p^n-1)$. There is a canonical map $r:BP_* \to E_{n\ast}=W(\F_{p^n})[[u_1,\ldots,u_{n-1}]][u,u^{-1}]$ defined by $r(v_i)=u_iu^{1-p^i}$ for $i<n$, $r(v_n)=u^{1-p^n}$ and $r(v_i)=0$ for $i>n$. Let $I$ be an ideal in $BP_*$ of the form $(p^{i_0},v_1^{i_1},\ldots, v_{n-1}^{i_{n-1}})$. Such ideals form a cofiltered system. Its images in $E_{n\ast}$ under $r:BP_* \to E_{n\ast}$ provide each $\pi_tE_n$ with the structure of a continuous profinite $G_n$-module as 
$$
\pi_tE_n \cong \lim_I \pi_tE_n/I\pi_tE_n.
$$
In fact, for $t$ odd these groups vanish, for $t$ even each quotient $\pi_tE_n/I\pi_tE_n$ is a finite discrete $G_n$-module and the decomposition as an inverse limit of these finite discrete $G_n$-modules is $G_n$-compatible.\\
For a cofinal subsystem of such ideals $I$, there are finite generalized Moore spectra $M_I$ with trivial $G_n$-action whose Brown-Peterson homology is $BP_*(M_I)=BP_*/I$ and who have the property $\pi_t(E_n \wedge M_I)\cong \pi_tE_n/I\pi_tE_n$ for all $t$, see \cite{homasa}. Moreover there is a canonical equivalence
$$E_n\simeq \holim_I E_n \wedge M_I.$$
This observation of \cite{homasa} is the starting point for all attempts to view $E_n$ as a continuous $G_n$-spectrum. \\
Although each $\pi_t(E_n \wedge M_I)$ is a finite discrete $G_n$-module, $G_n$ does not act discretely on the spectra $E_n\wedge M_I$, in the sense that there is no open subgroup $U$ of $G_n$ such that the $G_n$-action on the whole spectrum $E_n \wedge M_I$ factors through $G_n/U$. But a slightly less demanding statement holds. Each $E_n\wedge M_I$ and hence $E_n$ have a model in $\mathrm{Sp}(\hSh_{\ast G_n})$, i.e. models that are built out of pointed profinite $G_n$-spaces. 
\begin{theorem}\label{Endecomp}
$E_n$ has a model in the category of continuous profinite $G_n$-spectra, i.e. there is a profinite $G_n$-spectrum $E'_n$ and a $G_n$-equivariant map of spectra 
$$\psi:E_n \to E'_n$$ 
that is a stable equivalence of underlying spectra.
\end{theorem}
\begin{proof}
To simplify the notation, we will set $E_{n,I}:=E_n\wedge M_I$. The profinite group $G_n$ is  finitely generated as a profinite group and is hence strongly complete by the theorem of Nikolov and Segal \cite{niksegal}. Thus, since the homotopy groups of the $G_n$-spectrum $E_{n,I}$ are all finite, we can apply the replacement functor of Theorem \ref{Gstablefinitecompletion} to obtain a profinite $G_n$-spectrum $F^s_{G_n}(E_{n,I})$ built out of fibrant pointed simplicial finite $G_n$-sets and a $G_n$-equivariant map of spectra
$$E_{n,I} \to F^s_{G_n}(E_{n,I})$$ 
which induces isomorphisms 
$$\pi_t(E_{n,I}) = \pi_t(F^s_{G_n}(E_{n,I})) = \pi_t(|F^s_{G_n}(E_{n,I})|)$$
for every $t$ and hence is a stable equivalence of spectra. 
We define the profinite $G_n$-spectrum $E'_n$ to be the homotopy limit in $\mathrm{Sp}(\hSh_{\ast G_n})$ of the $F^s_{G_n}(E_{n,I})$ 
$$E'_n:=\holim_I F^s_{G_n}(E_{n,I})\in \mathrm{Sp}(\hSh_{\ast G_n})$$
over the cofinal subsystem of ideals $I$ for which $M_I$ exists. The profinite $G_n$-spectrum $E'_n$ is equipped with a $G_n$-equivariant map of spectra 
$$\psi:E_n \stackrel{\simeq}{\to} \holim_I E_{n,I} \stackrel{\simeq}{\to} \holim_I F^s_{G_n}(E_{n,I}) = E'_n$$ 
which is a stable equivalence of underlying spectra. This proves the assertion. 
\end{proof}
%
%
The above theorem also implies that $E'_n$ is a fibrant profinite $G$-spectrum for every closed subgroup $G$ of $G_n$. This allows to define the continuous homotopy fixed spectrum of $E_n$ under the action of any closed or open subgroup of $G_n$. 
\begin{defn}\label{Enfixed}
Let $G$ be any closed subgroup of $G_n$. The continuous homotopy fixed point spectrum $E_n^{hG}$ of the Lubin-Tate spectrum is the homotopy fixed point spectrum of $E'_n$ considered via restriction as a profinite $G$-spectrum, i.e. 
$$E_n^{hG}:=\Mapg(EG,E'_n).$$
\end{defn}
%
%
\begin{remark}
Since $E_n$ is $K(n)_*$-local, $E_n'$ is $K(n)_*$-local, and since taking mapping spectra  preserves $K(n)_*$-local objects, $E_n^{hG}$ is a $K(n)_*$-local spectrum.
\end{remark}
\begin{cor}
Let $G$ be a closed subgroup of $G_n$, and $K$ be a normal subgroup of $G$ such that $G/K$ is finite. Then $E_n^{hG}$ is naturally equivalent to $(E_n^{hK})^{hG/K}$. There is a strongly convergent spectral sequence for iterated homotopy fixed points
$$H^{\ast}(G/K;\pi_*E_n^{hK}) \Rightarrow \pi_*E_n^{hG}.$$
\end{cor}
\begin{proof}
This follows from Theorem \ref{iterated} applied to $E'_n$ of Theorem \ref{Endecomp}.
\end{proof}

Before we prove Theorem \ref{descentssintro} of the introduction, we collect some  consequences of our construction. Let $K(n)$ be the $n$th $p$-primary Morava $K$-theory spectrum. Its coefficient ring is given by $K(n)_*=\F_p[v_n,v_n^{-1}]$ where $v_n$ has degree $2(p^n-1)$. Let $\hL=L_{K(n)}$ denote $K(n)_*$-localization in $\Sp$. As in the proof of Theorem \ref{Endecomp}, we choose a cofinal subsystem of these ideals such that the associated finite generalized Moore spectra $M_I$ satisfy $\hL(\Sph^0)=\holim_I M_I$. 
To simplify the notation we will write 
$$\Eni:=F^s_{G_n}(E_n\wedge M_I)$$
for the fibrant profinite $G_n$-spectrum $F^s_{G_n}(E_n\wedge M_I)$ built out of simplicial finite discrete $G$-sets.  
\begin{lemma}\label{holimeni}
For any closed subgroup $G$ of $G_n$, there is an isomorphism of $\Omega$-spectra
$$E_n^{hG} \cong \holim_I (\Eni)^{hG}.$$
\end{lemma}
\begin{proof}
By Proposition \ref{stablemapfiber} we have an isomorphism 
$$\Mapg(EG,\holim_I \Eni) \cong \holim_I \Mapg(EG,\Eni).$$
The first assertion now follows from the fact that we defined $E_n^{hG}$ to be the left hand side of this isomorphism, and that the right hand side is $\holim_I (\Eni)^{hG}$.
\end{proof}
Furthermore, since $G_n$ is a $p$-adic analytic profinite group, it is possible to find a sequence of normal open subgroups of $G_n$
\begin{equation}\label{Uis}
G_n =U_0 \varsupsetneq U_1 \varsupsetneq \cdots \varsupsetneq U_i \varsupsetneq \cdots 
\end{equation}
with $\bigcap_i U_i=\{e\}$. For the rest of the paper we choose a fixed sequence of such open subgroups of $G_n$. 
\begin{lemma}\label{Encolimgui}
For every closed subgroup $G$ of $G_n$ and every ideal $I$, the canonical map 
$$\colim_i (\Eni)^{hGU_i} \stackrel{\sim}{\to}(\Eni)^{hG}$$
is an equivalence of spectra.
\end{lemma}
\begin{proof}
Filtering $EG$ and $EGU_i$ by its subskeleta, induces for each $i$ a map of towers of spectra 
$$\{\Map_{GU_i}(\sk_nEGU_i,\Eni)\}_n \to \{\Map_{G}(\sk_nEG,\Eni)\}_n.$$
This induces a map of convergent spectral sequences $\{E_r^{s,t}(i)\} \to \{E_r^{s,t}\}$ of the form
$$
\xymatrix{
E_2^{s,t}(i) = H^s(GU_i;\pi_t\Eni) \ar[d] \ar@{=>}[r] & \pi_{t-s}(\Eni)^{hGU_i} \ar[d]\\
E_2^{s,t} = H^s(G;\pi_t\Eni)\ar@{=>}[r] & \pi_{t-s}(\Eni)^{hG}.}
$$
As open subgroups of $G_n$, the $GU_i$ have a uniform bound for their cohomological dimension, i.e. there is an $N\in \N$ such that $E_2^{s,t}(i)=0$ for all $i$ whenever $s>N$. By \cite{mitchell}, Proposition 3.3, this implies that the colimit of spectral sequences $\colim_i \{E_r^{s,t}(i)\}$ converges to the colimit of the abutments. Hence we get an induced map of convergent spectral sequences of the form
\begin{equation}\label{sscompEni}
\xymatrix{
\colim_iE_2^{s,t}(i) = \colim_i H^s(GU_i;\pi_t\Eni) \ar[d] \ar@{=>}[r] & \colim_i \pi_{t-s}(\Eni)^{hGU_i} \ar[d]\\
E_2^{s,t} = H^s(G;\pi_t\Eni)\ar@{=>}[r] & \pi_{t-s}(\Eni)^{hG}.}
\end{equation}
Since the $U_i$'s satisfy $\bigcap_i U_i=\{e\}$ and since $\pi_t\Eni$ is a finite discrete $G_n$-module for each $t$, the map of $E_2$-terms of (\ref{sscompEni}) is an isomorphism for all $s$ and $t$. Since both spectral sequences are strongly convergent, this implies that the right hand map of (\ref{sscompEni}) is an isomorphism of finite abelian groups and that the canonical map of spectra 
$$\colim_i (\Eni)^{hGU_i} \to (\Eni)^{hG}$$
is an equivalence for each $I$. 
\end{proof}
%

\begin{prop}\label{holimcolimeni}
For any closed subgroup $G$ of $G_n$, there is an equivalence of $\Omega$-spectra
$$E_n^{hG} \simeq \holim_I \colim_i (\Eni)^{hGU_i}.$$
\end{prop}
\begin{proof}
By Lemma \ref{holimeni}, it suffices to show that $(\Eni)^{hG}$ is equivalent to the colimit $\colim_i 
(\Eni)^{hGU_i}$ for each $I$. This is the content of Lemma \ref{Encolimgui}.
\end{proof}
Let $S$ be a profinite set $S$ and let $S=\lim_{\alpha}S_{\alpha}$ be a presentation of $S$ as a limit of finite sets. We can consider $S$ also as a constant simplicial profinite set with identities as face and degeneracy maps. Then the constructions of the previous chapter give us spectra $\Map(S,E'_n)$ and $\Map(S,\Eni)$ for every $I$. 
The main case of interest for us is the one where $S=G^j$ is the $j$-fold product of a closed subgroup $G$ of $G_n$ for an integer $j\geq 0$.
\begin{lemma}\label{holimmapeni}
There is an isomorphism of $\Omega$-spectra
$$\Map(S,E'_n) \cong \holim_I\Map(S,\Eni).$$
\end{lemma}
\begin{proof}
By mimicking the proof of Proposition \ref{stablemapfiber} for $\Map(S,-)$, considered as a functor from $\hSpg$ to the category of $G$-spectra, we obtain an isomorphism 
$$\Map(S,\holim_I \Eni) \cong \holim_I \Map(S,\Eni).$$
The assertion then follows from the definition of $E'_n$ as $\holim_I\Eni$. 
\end{proof}

In the following, for profinite sets $S$ and $T$, we will denote the set of continuous maps from $S$ to $T$ also by
$$\Map(S,T):=\Hom_{\hEh}(S,T).$$
\begin{lemma}\label{mapgeni}
For each ideal $I$, the homotopy groups of the spectrum $\Map(S,\Eni)$ are given by the isomorphism
$$\pi_*\Map(S,\Eni)\cong \Map(S,\pi_*\Eni):=\Hom_{\hEh}(S,\pi_*\Eni)$$ 
where the right hand side denotes the group of continuous functions from the profinite set $S$ to the finite discrete homotopy groups of $\Eni$. 
\end{lemma}
\begin{proof}
Since $\Eni$ is an $\Omega$-spectrum that consists of simplicial finite sets, the spectrum $\Map(S,\Eni)$ is isomorphic to $\colim_{\alpha} \Map(S_{\alpha},\Eni)$. Taking homotopy groups commutes with this colimit of fibrant spectra by \cite{thomason}, Lemma 5.5.  
Since each $S_{\alpha}$ is a finite set, $\Map(S_{\alpha},\Eni)$ is just a finite product of copies of $\Eni$. This implies that taking homotopy groups also commutes with $\Map(S_{\alpha},-)$, i.e. there is an isomorphism
$$\pi_*\Map(S_{\alpha},\Eni) \cong \Map(S_{\alpha}, \pi_*\Eni).$$
Hence we obtain isomorphisms 
$$\pi_*\Map(S,\Eni) \cong \colim_{\alpha} \Map(S_{\alpha},\pi_*\Eni) \cong \Map(S,\pi_*\Eni).$$
\end{proof}

\begin{prop}\label{mapgen}
The homotopy groups of the spectrum $\Map(S, E'_n)$ are given by the isomorphism
$$\pi_*\Map(S,E'_n)\cong \Map(S,\pi_*E'_n):=\Hom_{\hEh}(S,\pi_*E'_n).$$  
\end{prop}
\begin{proof}
By Lemma \ref{mapgeni}, the assertion holds when we replace $E'_n$ by any $\Eni$. Moreover, $\Map(S,E'_n)$ is isomorphic to the homotopy limit $\holim_I\Map(S,\Eni)$ by Lemma \ref{holimmapeni}. Hence the spectral sequence for the homotopy groups of the homotopy limit of spectra 
$\Map(S,E'_n)$ has the form
$$E_2^{p,q}={\lim_I}^p \Map(S,\pi_q\Eni) \Rightarrow \pi_{q-p}\Map(S,E'_n).$$
Since the functor $\Map(S,-)$ is exact on the category of profinite abelian groups, the terms $E_2^{p,q}$ vanish for $p>0$ and the spectral sequence collapses. For each $q\in \Z$, we obtain an induced isomorphism 
$$\lim_I \Map(S,\pi_q\Eni) \cong \pi_{q}\Map(S,E'_n).$$
We remark that the category of profinite groups is the pro-category of finite groups. This means in particular, that the left hand side satisfies
$$\lim_I \Map(S,\pi_q\Eni) \cong \Map(S,\lim_I\pi_q\Eni).$$
Now it remains to recall $\pi_qE'_n\cong \lim_I\pi_q\Eni$. This proves the assertion.
\end{proof}

\subsection{Comparison with the construction of Devinatz and Hopkins} 
As in the introduction we will adopt the notation of \cite{behrensdavis} and \cite{davis} to denote the Devinatz-Hopkins fixed point spectra of $E_n$  of \cite{devinatzhopkins} by $E_n^{dhG}$. Devinatz and Hopkins define these spectra in two steps. First they define homotopy fixed point spectra $E_n^{dhU}$ for open subgroups $U$ of $G_n$ using the fact that $G_n/U$ is finite and that the expected homotopy type of $E_n^{dhU}\wedge E_n$ is the one of $\Map(G_n/U,E_n)$. This construction depends on the specific properties of $G_n$ as a $p$-adic analytic profinite group, and of the important calculations of the mapping space of self-maps of $E_n$ by Goerss and Hopkins in \cite{goersshopkins}. (The construction of $E'_n$ of course also relies on this calculation which makes it possible to define a model of $E_n$ on which $G_n$ acts by maps of spectra and not only by maps in the stable homotopy category.) \\ 
%
%
%
In a second step, they define the homotopy fixed points for a closed subgroup of $G_n$. Using the fixed sequence (\ref{Uis}), Devinatz and Hopkins define for an arbitrary closed subgroup $G$ of $G_n$
$$E_n^{dhG}:=\hat{L}(\hocolim_i E_n^{dh(U_iG)})$$
where $\hocolim$ denotes the homotopy colimit in the category 
of commutative $\Sph^0$-algebras in the category of $\Sph^0$-modules of \cite{ekmm}.\\ 
%
As remarked in \cite{devinatzhopkins}, p. 5, 
there is a canonical map $E_n^{dhG} \to E_n$. Since this map is $G_n$-equivariant and since $G$ acts trivially on $E_n^{dhG}$, this map factors through $E_n^{dhG}\to E_n^G$, i.e. through the $G$-fixed points of $E_n$. Furthermore, there is a canonical map from the fixed points $E_n^G$ to the continuous homotopy fixed points $E_n^{hG}$ of Definition \ref{Enfixed} given by the composition
$$E_n^G \to (E'_n)^G=\Mapg(\ast,E'_n) \to \Mapg(EG,E'_n)=E_n^{hG}.$$
Composition with the map $E_n^{dhG} \to E_n^G$ yields a map from $E_n^{dhG}$ to $E_n^{hG}$. Hence for any closed (or open) subgroup $G$ of $G_n$, there is a canonical map between the  homotopy fixed point spectra 
\begin{equation}\label{comparison}
E_n^{dhG} \longrightarrow E_n^{hG}.
\end{equation}
\subsection{Proof of Theorem \ref{descentssintro}} 
We have defined the homotopy fixed point spectrum $E_n^{hG}$ for an arbitrary closed subgroup $G$ of $G_n$ in Definition \ref{Enfixed}. This proves the first assertion of (i) in Theorem \ref{descentssintro}. 
The spectral sequence of Theorem \ref{stablefixeddescent} then yields the homotopy fixed point spectral sequence  
$$H^{s}(G;\pi_tE_n) \Rightarrow \pi_{t-s}(E_n^{hG})$$
for any closed subgroup $G$ of $G_n$ starting from the continuous cohomology of $G$ and converging to the homotopy groups of the homotopy fixed points of $E_n$ under $G$. This spectral sequence is natural in $G$. Since $G_n$ is a $p$-adic analytic group, so is $G$ and its continuous cohomology groups with profinite coefficients are also profinite groups, cf. \cite{symondsweigel}. Hence the ${\lim}^1$-terms of the $E_r$-terms all vanish. Moreover, $G$ has finite virtual cohomological dimension and the spectral sequence above is strongly convergent. This proves the first assertion of part (ii) of Theorem \ref{descentssintro}.\\
%

Let $S$ be a profinite set and $S=\lim_{\alpha}S_{\alpha}$ be a presentation of $S$ as a limit of finite sets. 
In preparation of the proof the main theorem, we will define a map of spectra
$$\hL(E_n\wedge \Map(S,E'_n)) \to \Map(S \times G_n,E'_n)$$ 
that induces an isomorphism on homotopy groups. \\
Let $U_i$ be an element of the sequence of open normal subgroups of $G_n$ and let $I$ be an element of the fixed set of ideals such that $\hL(\Sph^0)=\holim_I M_I$. 
For a given $I$, 
let $\mu_I$ be the induced multiplication map $\mu_I: \Eni \wedge \Eni \to \Eni$. 
We define a map of spectra
$$\tau(I,i): E_n^{dhU_i} \wedge \Map(S, E'_n) \to \Map(S \times G_n/U_i, \Eni)$$
as follows. Given a pair $(x,f)$ on the left hand side to the map $S\times G_n/U_i  \to \Eni$ that sends a pair $(s, \bar{h})$ in $S\times G_n/U_i$ to the element $\mu_I(f(s), x\bar{h}^{-1})$ in $\Eni$, where we abuse notations and denote the images of $x$ and $f(s)$ in $\Eni$ by the same symbols. Since $f$ is continuous and $G_n/U_i$ is a finite set, this map is continuous as well. \\ 
%
%
By taking colimits, the maps $\tau(I,i)$ induce a map
$$\colim_{i} E_n^{dhU_i} \wedge \Map(S, E'_n) \to \colim_{i} \Map(S \times G_n/U_i,\Eni).$$
%
%
The right hand spectrum is just $\Map(S\times G_n, \Eni)$, and we obtain a map
$$\tau(I):\colim_{i} E_n^{dhU_i} \wedge \Map(S, E'_n) \to \Map(S \times G_n,\Eni).$$
%
%
Moreover, the maps $\tau(I)$ induce a map 
\begin{equation}\label{holimtau}
\colim_{i} E_n^{dhU_i} \wedge \Map(S,E'_n) \to \holim_I \Map(S\times G_n, \Eni).
\end{equation}
By Lemma \ref{holimmapeni}, the right hand side is isomorphic to $\Map(S\times G_n, E'_n)$. Finally, since $\Map(S\times G_n, E'_n)$ is $K(n)_*$-local, we obtain an induced map from the $K(n)_*$-localization
\begin{equation}\label{tauj}
\tau: \hL(E_n \wedge \Map(S,E'_n)) \to \Map(S\times G_n, E'_n).
\end{equation}
%
%
\begin{theorem}\label{tauprop}
The map $\tau$ induces an isomorphism on homotopy groups. 
\end{theorem}
\begin{proof}
The map $\tau$ induces a homomorphism on homotopy groups of underlying spectra
\begin{equation}\label{pitau}
\pi_*(\tau): \pi_*\hL(E_n\wedge \Map(S,E'_n)) \to \pi_*\Map(S\times G_n, E'_n).
\end{equation}
By Theorem \ref{Endecomp} and Proposition \ref{mapgen}, the right hand side of (\ref{pitau}) is isomorphic to $\Map(S\times G_n,\pi_*E_n)$. By \cite{devinatzhopkins}, (2.3) and page 25, 
and the construction of $\tau$, the homomorphism $\pi_*(\tau)$ is exactly the isomorphism 
$$\pi_*\hL(E_n\wedge \Map(S,E'_n)) \to \Map(S\times G_n,\pi_*E_n).$$
%
\end{proof}

If $S=*$ is just a point, the equivalence $E_n\simeq |E'_n|$ implies the following consequence of the previous proposition.
\begin{cor}
The map of spectra 
$$\hL(E_n \wedge E_n) \to \Map(G_n, E'_n)$$
induces an isomorphism on homotopy groups.
\end{cor}
%
%
We would like to generalize this result a bit further. Therefore, let $Z\in \Sp$ be a spectrum and let $t\in \Z$ be an integer. We denote by $[Z,?]^t$ maps in the stable homotopy category of degree $-t$. Let $Z\cong \colim_{\alpha}Z_{\alpha}$ be a decomposition of $Z$ as a filtered colimit of finite subspectra $Z_{\alpha}$. We recall from \cite{devinatzhopkins} that the $E_n$-cohomology groups $E_n^tZ=[Z,E_n]^t$ of $Z$ are topologized as the profinite abelian groups
$$E_n^tZ=\lim_{\alpha,I} E_n^tZ_{\alpha}/IE_n^tZ_{\alpha}.$$
For each $\alpha$, the above decomposition of $E_n^tZ$ into a limit over the ideals $I$ is induced by the decomposition of $E_n\cong \holim E_n\wedge M_I$. For each $I$, the finite abelian group $E_n^tZ_{\alpha}/IE_n^tZ_{\alpha}$ is isomorphic to $(E_n\wedge M_I)^tZ_{\alpha}$. \\
Furthermore, by the construction of $\Eni$ and by (\ref{compgen}), we have a natural isomorphism
$$(\Eni)^tZ_{\alpha}\cong (E_n\wedge M_I)^tZ_{\alpha}\cong E_n^tZ_{\alpha}/IE_n^tZ_{\alpha}.$$
By definition of $E'_n$ as $\holim_I \Eni$ and by (\ref{compgen}), we obtain a natural isomorphism
$$(E'_n)^tZ_{\alpha} \cong E_n^tZ_{\alpha}/IE_n^tZ_{\alpha}.$$
Hence the map $\psi:E_n \to E'_n$ induces an isomorphism 
\begin{equation}\label{EnE'n}
E_n^tZ \cong (E'_n)^tZ
\end{equation}
of profinite abelian groups. Considering the $G_n$-action on $E_n$, we even know that (\ref{EnE'n}) is an isomorphism of profinite $G_n$-modules. 
%
%
\begin{prop}\label{taupropZ}
Let $G$ be a closed subgroup of $G_n$, $j\geq 0$ a positive integer. For every spectrum $Z\in \Sp$ and every integer $t\in \Z$, the map  
$$\tau^j(Z): [Z,\hL(E_n\wedge \Map(G^j,E'_n))]^t \to \Map(G^j\times G_n, E_n^tZ)$$
induced by $\tau$ is an isomorphism of abelian groups.  
\end{prop}
\begin{proof}
By the definition of the topology on $E_n^tZ \cong (E'_n)^tZ$, it suffices to show the assertion for any finite subspectrum $Z_{\alpha}$ of $Z$. Hence we can assume that $Z$ is a finite spectrum. Moreover, since $\tau$ induces a natural transformation
$$[-, \hL(E_n\wedge \Map(G^j,E'_n))]^* \to \Map(G^j\times G_n, E_n^*(-))$$ 
of cohomology theories satisfying the product axiom, it suffices to prove the assertion for $Z=\Sph^0$. For $\Sph^0$, the assertion follows from Proposition \ref{mapgen} and Theorem \ref{tauprop}.
\end{proof}

%
%
Let $G$ be again a closed subgroup of $G_n$. The cosimplicial spectrum $\Map(G^{\bullet},E'_n)$ with the coface maps defined in (\ref{cosimpstructure2}) defines a cochain complex 
in the stable homotopy category of spectra given in degree $j$ by $\Map(G^j,E'_n)$ with differentials $\tilde{\delta}^i:=\sum_i (-1)^i\tilde{d}^i$. \\
Moreover, the map $E_n^{dhG}\to E_n\to E'_n$ induces an augmentation map 
$$E_n^{dhG} \to E'_n=\Map(G^0,E'_n).$$ 
Thus we obtain a sequence of maps in the stable homotopy category
\begin{equation}\label{dhgseq}
*\to E_n^{dhG} \to \Map(G^0,E'_n) \to \Map(G^1,E'_n) \to \Map(G^2,E'_n) \to \ldots
\end{equation}
Using the terminology of \cite{miller} and \cite{devinatzhopkins} for $K(n)_*$-local $E_n$-Adams resolutions, we can now show the following key result.
\begin{theorem}\label{adamsthm}
Sequence (\ref{dhgseq}) is a $K(n)_*$-local $E_n$-Adams resolution of $E_n^{dhG}$.
\end{theorem}
\begin{proof}
First, we have to show that each $\Map(G^j,E'_n)$ is $E_n$-injective, i.e. that it is a retract of $\hL(Y\wedge E_n)$ for some spectrum $Y$. The isomorphisms in the stable homotopy category
$$\begin{array}{rcl}
\Map(G^j,E'_n) & \cong & \holim_I \colim_i \Map(G/(G\cap U_i),\Eni) \\
 & \cong & \holim_I \colim_i \prod_{G/(G\cap U_i)} \Eni \\
 & \cong & \holim_I \colim_i \prod_{G/(G\cap U_i)} E_n\wedge M_I \\
  & \cong & \holim_I \colim_i (\prod_{G/(G\cap U_i)}\Sph^0) \wedge E_n\wedge M_I \\
   & \cong & \hL(\colim_i (\prod_{G/(G\cap U_i)}\Sph^0) \wedge E_n).
\end{array}
$$
shows that $\Map(G^j,E'_n)$ is a retract of $\hL(\colim_i (\prod_{G/(G\cap U_i)}\Sph^0) \wedge E_n)$. \\
It remains to show that (\ref{dhgseq}) is $E_n$-exact. By \cite{devinatzhopkins}, Appendix A, it suffices to show that the sequence of abelian groups 
\begin{equation}\label{dhgcochainseq}
0\to [Z,\hL(E_n^{dhG}\wedge E_n)]^t \to [Z,\hL(E_n\wedge\Map(G^0,E'_n))]^t \to [Z,\hL(E_n\wedge\Map(G^1,E'_n))]^t \to \ldots
\end{equation}
is exact for every spectrum $Z\in \Sp$. \\
By Proposition \ref{taupropZ}, we have an isomorphism 
$$H^i([Z,\hL(E_n\wedge\Map(G^*,E'_n))]^t)\cong H^i(\Map(G^*\times G_n, E_n^tZ)).$$
%
%
%
By \cite{serre}, Chap. I, \S 1.2, Proposition 1.1, the surjective map $G_n\to G_n/G$ of profinite sets admits a continuous (set-theoretic) section. This implies that there is a homeomorphism $G_n \cong G\times G_n/G$. This induces an isomorphism of cochain complexes
$$\Map(G^*\times G_n, E_n^tZ) \cong \Map(G^*\times G \times G_n/G, E_n^tZ).$$
For the latter cochain complex, the map 
$$q: \Map(G^{*+1}\times G \times G_n/G, E_n^tZ) \to \Map(G^*\times G \times G_n/G, E_n^tZ)$$
defined by
$$q(f)(g_1, \ldots, g_{j},t, \bar{h})=(-1)^{j+1}f(g_1t, \ldots, g_jt, t,1, \bar{h})$$
is a contracting homotopy. This shows  
$$H^i(\Map(G^*\times G_n,E_n^tZ)) = 0 ~\mathrm{for}~i>0.$$
For $i=0$, $H^0(\Map(G^*\times G_n, E_n^tZ))$ is the kernel of the map 
$$\tilde{\delta}^0:\Map(G_n, E_n^tZ) \to \Map(G\times G_n, E_n^tZ)$$
given by the subgroup of $G$-invariant elements 
$$(\Map(G_n, E_n^tZ))^G \subset \Map(G_n, E_n^tZ).$$
%
%
Hence we have an isomorphism 
$$H^0(\Map(G_n\times G^*, E_n^tZ))\cong (\Map(G_n, E_n^tZ))^G.$$
By \cite{devinatzhopkins}, Proposition 6.3, there is an isomorphism
$$[Z,\hL(E_n^{dhG}\wedge E_n)]^t \cong \Map(G_n, E_n^tZ)^G.$$
This shows that sequence (\ref{dhgcochainseq}) is exact.
\end{proof}

\begin{theorem}\label{finalthm}
The homotopy fixed point spectral sequence converging to $E_n^{hG}$ is isomorphic to the $K(n)_*$-local $E_n$-Adams spectral sequence of \cite{devinatzhopkins} converging to $E_n^{dhG}$. The canonical map $E_n^{dhG}\to E_n^{hG}$ is an equivalence of spectra. 
\end{theorem}
\begin{proof}
By Theorem \ref{adamsthm}, sequence (\ref{dhgseq}), which is induced by the cosimplicial spectrum $\Map(G^{\bullet},E'_n)$, is a $K(n)_*$-local $E_n$-Adams resolution of $E_n^{dhG}$. Hence, by \cite{devinatzhopkins}, Proposition A.5, the spectral sequence associated to the tower of fibrations of total spectra 
$$\{\mathrm{Tot}_k\,\Pi^{\bullet}(\Map(G^{\bullet},E'_n))\}_k$$
is isomorphic to a the $K(n)_*$-local $E_n$-Adams spectral sequence for $E_n^{dhG}$, where $\Pi^{\bullet}$ denotes the cosimplicial replacement functor of \cite{bouskan}, XI. Moreover, by \cite{devinatzhopkins}, Proposition A.8, the map
$$E_n^{dhG} \to \holim_{\Delta} \Map(G^{\bullet},E'_n)$$
is an equivalence of spectra. \\
Since $\Map(G^{\bullet},E'_n)\cong \Mapg(G^{\bullet+1},E'_n)$ is a fibrant cosimplicial fibrant spectrum, the canonical map 
$$\Tot \Map(G^{\bullet},E'_n) \to \holim_{\Delta} \Map(G^{\bullet},E'_n)$$
is an equivalence of spectra by \cite{thomason}, Lemma 5.25. Hence we obtain a commutative diagram of maps of spectra 
$$\xymatrix{
E_n^{dhG} \ar[rr] \ar[dr]^{\sim} & &  \Tot \Map(G^{\bullet},E'_n) \ar[dl]_{\sim} \\
 & \holim_{\Delta} \Map(G^{\bullet},E'_n). & }
$$
Since the two bottom maps are equivalences, the map 
$$E_n^{dhG} \to \Tot \Map(G^{\bullet},E'_n)$$
is an equivalence as well. Since the right hand spectrum is isomorphic to $E_n^{hG}$ by Proposition \ref{sscomparison3}, this shows that the map 
$$E_n^{dhG} \to E_n^{hG}$$
is an equivalence of spectra.\\
Furthermore, the map of spectral sequences induced by the map of towers of fibrations 
$$\{\mathrm{Tot}_k\,(\Map(G^{\bullet},E'_n))\}_k \to \{\mathrm{Tot}_k\,\Pi^{\bullet}(\Map(G^{\bullet},E'_n))\}_k$$
is an isomorphism of spectral sequences from $E_2$-terms on (see for example \cite{devinatzhopkins}, Proposition 4.16). Together with the previous conclusions, this proves the assertion. 
\end{proof}
%
%
\begin{cor}
The spectral sequence obtained by mapping a spectrum $Z$ into the tower of fibrations $\{\mathrm{Tot}_k\,(\Map(G^{\bullet},E'_n))\}_k$ has the form
$$E_2^{s,t}=H^s(G;E_n^tZ)\Rightarrow (E_n^{hG})^{t+s}Z.$$
This spectral sequence is strongly convergent and isomorphic to the $K(n)_*$-local $E_n$-Adams spectral sequence of \cite{devinatzhopkins} converging to $(E_n^{dhG})^{*}Z$.
\end{cor}
\begin{proof}
By Theorem \ref{adamsthm} and by \cite{devinatzhopkins}, Proposition A.5, the spectral sequence obtained by mapping a spectrum $Z$ into the tower of fibrations $\{\mathrm{Tot}_k\,(\Map(G^{\bullet},E'_n))\}_k$ is isomorphic to the $K(n)_*$-local $E_n$-Adams spectral sequence of \cite{devinatzhopkins} converging to $(E_n^{dhG})^{*}Z$. By Proposition \ref{sscomparison3}, $[Z,\Tot \Map(G^{\bullet},E'_n)]^*$ is isomorphic to $(E_n^{hG})^*Z$. 
The identification of the abutments then follows from Theorem \ref{finalthm}, since the equivalence $E_n^{dhG}\simeq E_n^{hG}$ implies a natural isomorphism $(E_n^{dhG})^{*}Z \cong (E_n^{hG})^{*}Z$. 
\end{proof}

\bibliographystyle{amsplain}

\end{document}